\documentclass{scrartcl}

\usepackage[utf8]{luainputenc}
\usepackage[UKenglish]{babel}
\usepackage{csquotes}

\usepackage[a4paper,top=27mm,bottom=20mm,inner=25mm,outer=20mm]{geometry}
\usepackage[plainpages=false,pdfpagelabels,hidelinks,unicode]{hyperref}

\usepackage[%
  backend=biber,
  style=numeric-comp,
  giveninits=true, uniquename=init, 
  natbib=true,
  url=true,
  doi=true,
  isbn=false,
  backref=false,
  maxnames=99,
  ]{biblatex}
\addbibresource{references.bib}

\usepackage{amsmath}
\allowdisplaybreaks
\usepackage{amssymb}
\usepackage{commath}
\usepackage{mathtools}
\usepackage{bbm}
\usepackage{nicefrac}

\usepackage{siunitx}

\usepackage{amsthm}
\theoremstyle{plain}
  \newtheorem{theorem}{Theorem}
  \newtheorem{lemma}[theorem]{Lemma}
  \newtheorem{proposition}[theorem]{Proposition}
  
  \newtheorem{corollary}[theorem]{Corollary}
\theoremstyle{definition}
  \newtheorem{definition}[theorem]{Definition}
  \newtheorem{remark}[theorem]{Remark}
  \newtheorem{example}[theorem]{Example}
\numberwithin{theorem}{section}

\usepackage{color}
\usepackage{graphicx}
\usepackage[small]{caption}
\usepackage{subcaption}

\begingroup\expandafter\expandafter\expandafter\endgroup
\expandafter\ifx\csname pdfsuppresswarningpagegroup\endcsname\relax
\else
  \pdfsuppresswarningpagegroup=1\relax
\fi

\usepackage{booktabs}
\usepackage{rotating}
\usepackage{multirow}

\usepackage{enumitem}

\usepackage{ifluatex}
\ifluatex
  \usepackage[no-math]{fontspec}
\else
  \usepackage[T1]{fontenc}
\fi
\usepackage{newpxtext,newpxmath}

\newcommand{\N}{\mathbb{N}}
\newcommand{\R}{\mathbb{R}}
\newcommand{\scp}[2]{\left\langle{#1,\, #2}\right\rangle}
\newcommand{\I}{\operatorname{I}}

\DeclarePairedDelimiterX\newset[1]\lbrace\rbrace{\setaux #1||\endsetaux}
\def\setaux#1|#2|#3\endsetaux{\if\relax\detokenize{#2}\relax #1 \else #1 \;\delimsize\vert\; #2 \fi}
\renewcommand{\set}[1]{\newset*{#1}}

\let\epsilon\varepsilon
\let\phi\varphi
\let\rho\varrho

\renewcommand{\div}{\operatorname{div}}
\newcommand{\rot}{\operatorname{rot}}
\newcommand{\curl}{\operatorname{curl}}
\newcommand{\grad}{\operatorname{grad}}

\newcommand{\kernel}{\operatorname{ker}}
\newcommand{\image}{\operatorname{im}}
\newcommand{\spann}{\operatorname{span}}

\newcommand{\st}{\operatorname{s.t.}\;}
\newcommand{\argmin}{\operatorname{arg~min}}

\newcommand{\osc}{\vec{\mathrm{osc}}}
\renewcommand{\vec}[1]{\pmb{#1}}

\newenvironment{keywords}{\par\textbf{Key words.}}{\par}
\newenvironment{AMS}{\par\textbf{AMS subject classification.}}{\par}

\title{Discrete Vector Calculus and Helmholtz Hodge Decomposition for Classical
       Finite Difference Summation by Parts Operators}
\author{Hendrik Ranocha, Katharina Ostaszewski, Philip Heinisch}
\date{1st December 2019}

\makeatletter
\hypersetup{pdfauthor={\@author}}
\hypersetup{pdftitle={\@title}}
\makeatother

\begin{document}

\maketitle

\begin{abstract}
In this article, discrete variants of several results from vector calculus are
studied for classical finite difference summation by parts operators in two and
three space dimensions. It is shown that existence theorems for scalar/vector
potentials of irrotational/solenoidal vector fields cannot hold discretely because
of grid oscillations, which are characterised explicitly. This results in a
non-vanishing remainder associated to grid oscillations in the discrete Helmholtz
Hodge decomposition. Nevertheless, iterative numerical methods based on an interpretation
of the Helmholtz Hodge decomposition via orthogonal projections are proposed
and applied successfully. In numerical experiments, the discrete remainder vanishes
and the potentials converge with the same order of accuracy as usual in other first
order partial differential equations. Motivated by the successful application of
the Helmholtz Hodge decomposition in theoretical plasma physics, applications to
the discrete analysis of magnetohydrodynamic (MHD) wave modes are presented and
discussed.

\end{abstract}

\begin{keywords}
  summation by parts,
  vector calculus,
  Helmholtz Hodge decomposition,
  mimetic properties,
  wave mode analysis
\end{keywords}

\begin{AMS}
  65N06,  
  65M06,  
  65N35,  
  65M70,  
  65Z05   
\end{AMS}

\section{Introduction}

The Helmholtz Hodge decomposition of a vector field into irrotational and solenoidal
components and their respective scalar and vector potentials is a classical result
that appears in many different variants both in the traditional fields of mathematics
and physics and more recently in applied sciences like medical imaging \cite{SIMS2018}.
Especially in the context of classical electromagnetism and plasma physics, the Helmholtz
Hodge decomposition has been used for many years to help analyse turbulent velocity fields
\cite{Kowal2010,Beresnyak2019} or separate current systems into source-free and
irrotational components \cite{KHG1983,KHG1984,KHG1988,KHG1999}.
Numerical implementations can be useful for different tasks, as described in the
survey article \cite{bhatia2012helmholtz} and references cited therein. Some
recent publications concerned with (discrete) Helmholtz Hodge decompositions are
\cite{angot2013fast,ahusborde2014discrete,lemoine2015discrete}.

The main motivation for this article is the analysis of numerical solutions of
hyperbolic balance/conservation laws such as the (ideal) magnetohydrodynamic (MHD)
equations. Since the Helmholtz Hodge decomposition is a classical tool for the
(theoretical) analysis of these systems, it is reasonable to assume that it can
be applied fruitfully also in the discrete context.

For the hyperbolic partial differential equations of interest, summation by parts
(SBP) operators provide a means to create stable and conservative discretisations
mimicking energy and entropy estimates available at the continuous level, cf.
\cite{svard2014review,fernandez2014review} and references cited therein. While
SBP operators originate in the finite difference (FD) community
\cite{kreiss1974finite,strand1994summation}, they include also other schemes such as finite volume (FV)
\cite{nordstrom2001finite,nordstrom2003finite}, discontinuous Galerkin (DG)
\cite{gassner2013skew,fernandez2014generalized}, and the recent flux
reconstruction/correction procedure via reconstruction schemes
\cite{huynh2007flux,ranocha2016summation}.

SBP operators are constructed to mimic integration by parts discretely. Such
mimetic properties of discretisations can be very useful to transfer results
from the continuous level to the discrete one and have been of interest in
various forms \cite{hyman1997natural,lipnikov2014mimetic,ranocha2019mimetic}.
In this article, the focus will lie
on finite difference operators, in particular on nullspace consistent ones.
Similarly, (global) spectral methods based on Lobatto Legendre nodes can also be
used since they satisfy the same assumptions.

Several codes applied in practice for relevant problems of fluid mechanics
or plasma physics are based on collocated SBP operators.
To the authors' knowledge, the most widespread form of SBP operators used in practice
are such collocated ones \cite{svard2014review,fernandez2014review}. These are
easy to apply and a lot of effort has gone into optimising such schemes
\cite{mattsson2014optimal,mattsson2018boundary}.
Since this study is motivated by the analysis of numerical results
obtained using widespread SBP operators resulting in provably stable
methods, it is natural to consider classical collocated SBP operators.
The novel numerical analysis provided in this article shows that a
discrete Helmholtz Hodge decomposition and classical representation
theorems of divergence/curl free vector fields cannot hold discretely
in this setting. Even such a negative result is an important contribution,
not least since the reason for the failure of these theorems from vector
calculus for classical SBP operators is analysed in detail, showing the
importance of explicitly characterised grid oscillations. Based on this
analysis, numerical evidence is reported, showing that the negative
influence of grid oscillations vanishes under grid refinement for smooth
data. Moreover, means are developed to cope with the presence of such
adversarial grid oscillations.

This article is structured as follows. Firstly, the concept of summation by parts
operators is briefly reviewed in Section~\ref{sec:SBP}. Thereafter, classical
existence theorems for scalar/vector potentials of irrotational/solenoidal vector
fields are studied in the discrete context in Section~\ref{sec:potentials}.
It will be shown that these representation theorems cannot hold discretely.
Furthermore, the kernels of the discrete curl and divergence operators will be
characterised via the images of the discrete gradient and curl operators and some
additional types of grid oscillations. After a short excursion to the discrete
characterisation of vector fields that are both divergence and curl free as
gradients of harmonic functions in Section~\ref{sec:div-curl-free}, the discrete
Helmholtz decomposition is studied in Section~\ref{sec:helmholtz}. It will be
shown that classical SBP operators cannot mimic the Helmholtz Hodge decomposition
$u = \grad \phi + \curl v$ discretely. Instead, the remainder $r = u - \grad \phi
- \curl v$ will in general not vanish in the discrete setting. Nevertheless, this
remainder in associated with certain grid oscillations and converges to zero,
as shown in numerical experiments in Section~\ref{sec:numerical-examples}.
Additionally, applications to the analysis of MHD wave modes are presented and
discussed.
Finally, the results are summed up and discussed in Section~\ref{sec:summary}
and several directions of further research are described.

\section{Summation by Parts Operators}
\label{sec:SBP}

In the following, finite difference methods on Cartesian grids will be used.
Hence, the one dimensional setting is described at first.
Throughout this article, functions at the continuous level are denoted using
standard font, e.g. $u$, and discrete grid functions are denoted using
boldface font, e.g. $\vec{u}$, independently on whether they are scalar
or vector valued.

The given domain $\Omega = [x_L, x_R]$ is discretised using a uniform grid with
nodes $x_L = x_1 < x_2 < \dots < x_N = x_R$ and a function $u$ on $\Omega$ is
represented discretely as a vector $(\vec{u}^{(a)})_a$, where the components are the
values at the grid nodes, i.e. $\vec{u}^{(a)} = u(x_a)$. Since a collocation setting is
used, the grid is the same for every (vector or scalar valued) function and both
linear and nonlinear operations are performed componentwise. For example, the
product of two functions $u$ and $v$ is represented by the Hadamard product of
the corresponding vectors, i.e. $(\vec{u} \vec{v})^{(a)} = \vec{u}^{(a)} \vec{v}^{(a)}$.

\begin{definition}
  An SBP operator with order of accuracy $p \in \N$ on $\Omega = [x_L,x_R] \subset \R$
  consists of the following components.
  \begin{itemize}
    \item
    A discrete derivative operator $D$, approximating the derivative $\partial_x u$
    as $D \vec{u}$ with order of accuracy $p$.

    \item
    A symmetric and positive definite mass matrix\footnote{The name ``mass matrix''
    is common for finite element methods such as discontinuous Galerkin methods,
    while ``norm matrix'' is more common in the finite difference community. Here,
    both names will be used equivalently.} $M$, approximating the
    scalar product on $L^2(\Omega)$ via
    \begin{equation}
      \vec{u}^T M \vec{v}
      =
      \scp{\vec{u}}{\vec{v}}_M
      \approx
      \scp{u}{v}_{L^2(\Omega)}
      =
      \int_\Omega u \cdot v.
    \end{equation}

    \item
    A boundary operator $E$, approximating the difference of boundary values as
    in the fundamental theorem of calculus as $u(x_R) v(x_R) - u(x_L) v(x_L)$
    via $\vec{u}^T E \vec{v}$ with order of accuracy $p$.

    \item
    Finally, the SBP property
    \begin{equation}
    \label{eq:SBP}
      M D + D^T M = E
    \end{equation}
    has to be fulfilled.
  \end{itemize}
\end{definition}

The SBP property \eqref{eq:SBP} ensures that integration by parts is mimicked
discretely as
\begin{gather}
\label{eq:SBP-IBP}
  \begin{array}{ccc}
    \underbrace{
      \vec{u}^T M D \vec{v}
      + \vec{u}^T D^T M \vec{v}
    }
    & = &
    \underbrace{
      \vec{u}^T E \vec{v},
    }
    \\
    \rotatebox{90}{$\!\approx\;$}
    &&
    \rotatebox{90}{$\!\!\approx\;$}
    \\
    \overbrace{
      \int_{x_L}^{x_R} u \, (\partial_x v)
      + \int_{x_L}^{x_R} (\partial_x u) \, v
    }
    & = &
    \overbrace{
      u \, v \big|_{x_L}^{x_R}
    },
  \end{array}
\end{gather}

In the following, finite difference operators on nodes including the boundary
points will be used. In that case, $E = \operatorname{diag}(-1,0,\dots,0,1)$.

For the numerical tests, only diagonal norm SBP operators are considered, i.e.
those SBP operators with diagonal mass matrices $M$, because of their improved
properties for (semi-) discretisations
\cite{svard2004coordinate,sjogreen2017skew,fisher2013high}.
In this case, discrete integrals are evaluated using the quadrature provided by
the weights of the diagonal mass matrix \cite{hicken2013summation}. While there
are also positive results for dense norm operators, the required techniques are
more involved
\cite{ranocha2017extended,ranocha2017shallow,chan2018discretely,fernandez2019extension}.
However, the techniques and results of this article do not depend on diagonal
mass matrices.

For classical diagonal norm SBP operators, the order of accuracy is $2p$ in the
interior and $p$ at the boundaries \cite{kreiss1974finite,linders2018order},
allowing a global convergence order of $p+1$ for hyperbolic problems
\cite{svard2006order,svard2014note,svard2017convergence}. Here, SBP operators
will be referred to by their interior order of accuracy $2p$.

\begin{example}
\label{ex:SBP-2}
  The classical second order accurate SBP operators are
  \begin{equation}
  \label{eq:SBP-2}
    D
    =
    \frac{1}{2 \Delta x}
    \begin{pmatrix}
      -2 & 2 \\
      -1 & 0 & 1 \\
      & \ddots & \ddots & \ddots \\
      && -1 & 0 & 1 \\
      &&& -2 & 2
    \end{pmatrix},
    \qquad
    M
    =
    \Delta x
    \begin{pmatrix}
      \frac{1}{2} \\
      & 1 \\
      && \ddots \\
      &&& 1 \\
      &&&& \frac{1}{2}
    \end{pmatrix},
  \end{equation}
  where $\Delta x$ is the grid spacing.
  Thus, the first derivative is given by the standard second order central derivative
  in the interior and by one sided derivative approximations at the boundaries.
\end{example}

SBP operators are designed to mimic the basic integral theorems of vector calculus
(fundamental theorem of calculus, Gauss' theorem, Stokes' theorem) in the given
domain $\Omega$ (but not necessarily on subdomains of $\Omega$). However, this
mimetic property does not suffice for the derivations involving scalar and vector
potentials in the following. Hence, nullspace consistency will be used as
additional mimetic property that has also been used in
\cite{svard2017convergence,linders2019convergence}.
\begin{definition}
  An SBP derivative operator $D$ is nullspace consistent, if the nullspace/kernel
  of $D$ is $\kernel D = \spann \set{\vec{1}}$.
\end{definition}
\begin{remark}
  A consistent derivative operator $D$ satisfies
  $\spann \set{\vec{1}} \subseteq \kernel D$.
  Some undesired behaviour can occur if $\kernel D \neq \spann \set{\vec{1}}$, cf.
  \cite{svard2017convergence,linders2019convergence,ranocha2019some}.
\end{remark}

In multiple space dimensions, tensor product operators will be used, i.e. the one
dimensional SBP operators are applied accordingly in each dimension. In the following,
$\I_{s}$, $s \in \set{x,y,z}$, are identity matrices and $D_{s}, M_{s}, E_{s}$,
$s \in \set{x,y,z}$, are one dimensional SBP operators in the corresponding
coordinate directions.

\begin{definition}
  In two space dimensions, the tensor product operators are
  \begin{equation}
  \begin{gathered}
  \begin{aligned}
    D_1 &= D_x \otimes \I_y, &
    D_2 &= \I_x \otimes D_y,
    \\
    E_1 &= E_x \otimes M_y, &
    E_2 &= M_x \otimes E_y,
  \end{aligned}
    \\
    M = M_x \otimes M_y,
  \end{gathered}
  \end{equation}
  and the vector calculus operators are
  \begin{equation}
    \grad = \begin{pmatrix} D_1 \\ D_2 \end{pmatrix}, \quad
    \rot = \begin{pmatrix} D_2 \\ -D_1 \end{pmatrix}, \quad
    \curl = \begin{pmatrix} -D_2, D_1 \end{pmatrix}, \quad
    \div = \begin{pmatrix} D_1, D_2 \end{pmatrix}.
  \end{equation}
\end{definition}
\begin{remark}
  In two space dimensions, $\curl$ maps vector fields to scalar fields and
  $\rot$ maps scalar fields to vector fields. In three space dimensions, both
  operations correspond to the classical curl of a vector field.
\end{remark}

\begin{definition}
  In three space dimensions, the tensor product operators are
  \begin{equation}
  \begin{gathered}
  \begin{aligned}
    D_1 &= D_x \otimes \I_y \otimes \I_z, &
    D_2 &= \I_x \otimes D_y \otimes \I_z, &
    D_3 &= \I_x \otimes \I_y \otimes D_z,
    \\
    E_1 &= E_x \otimes M_y \otimes M_z, &
    E_2 &= M_x \otimes E_y \otimes M_z, &
    E_3 &= M_x \otimes M_y \otimes E_z.
  \end{aligned}
    \\
    M = M_x \otimes M_y \otimes M_z,
  \end{gathered}
  \end{equation}
  and the vector calculus operators are
  \begin{equation}
    \grad = \begin{pmatrix} D_1 \\ D_2 \\ D_3 \end{pmatrix}, \quad
    \curl = \begin{pmatrix} 0 & -D_3 & D_2 \\ D_3 & 0 & -D_1 \\ -D_2 & D_1 & 0 \end{pmatrix}, \quad
    \div = \begin{pmatrix} D_1, D_2, D_3 \end{pmatrix}.
  \end{equation}
\end{definition}

\begin{remark}
\label{rem:discrete-derivatives-commute}
  The standard tensor product discretisations of vector calculus operators given
  above satisfy $\div \curl = 0$ (or $\div \rot = 0$) and $\curl \grad = 0$,
  since the discrete derivative operators commute, i.e. $D_j D_i = D_i D_j$.
\end{remark}

\section{Scalar and Vector Potentials}
\label{sec:potentials}

Classical results of vector calculus in three space dimensions state that (under
suitable assumptions on the regularity of the domain and the vector fields)
\begin{itemize}
  \item
  a vector field $u$ is curl free if and only if $u$ has a scalar potential
  $\phi$, i.e. $u = \grad \phi$,

  \item
  a vector field $u$ is divergence free if and only if $u$ has a vector potential
  $v$, i.e. $u = \curl v$.
\end{itemize}
Using modern notation, these classical theorems can be formulated as follows,
cf. \cite[Theorem~I.2.9]{girault1986finite}, \cite[Corollary~2]{schweizer2018friedrichs}
and \cite[Lemma~4.4]{jikov1994homogenization}. In the following, $\Omega$ is
always assumed to be a bounded rectangle/cuboid in $\R^d$, $d \in \set{2,3}$.
\begin{theorem}
\label{thm:continuous-scalar-potential}
  A vector field $u \in L^2(\Omega)^d$ satisfies $\curl u = 0$ if and only if
  there exists a scalar potential $\phi \in H^1(\Omega)$ satisfying $u = \grad \phi$.
\end{theorem}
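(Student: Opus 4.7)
The plan is to handle the implications separately: the forward direction follows from $\curl \grad = 0$ at the level of distributions, while the converse is obtained by mollification plus the classical Poincaré lemma applied on a convex exhaustion of $\Omega$.

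For $u = \grad \phi \Rightarrow \curl u = 0$: pick a sequence $\phi_n \in C^\infty(\overline{\Omega})$ with $\phi_n \to \phi$ in $H^1(\Omega)$. Then $\curl \grad \phi_n = 0$ pointwise for each $n$, and passing to the distributional limit (using $\grad \phi_n \to u$ in $L^2(\Omega)^d$) yields $\curl u = 0$ in $\mathcal{D}'(\Omega)$.

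For the converse, assume $\curl u = 0$ in $\mathcal{D}'(\Omega)$. Fix an exhausting family $\Omega_k \Subset \Omega_{k+1} \Subset \Omega$ of concentric rectangles/cuboids with $\bigcup_k \Omega_k = \Omega$. For a standard mollifier $\rho_\epsilon$ and $\epsilon < \operatorname{dist}(\Omega_k, \partial \Omega)$, the field $u_\epsilon := \rho_\epsilon * u$ is smooth on $\Omega_k$ and satisfies $\curl u_\epsilon = \rho_\epsilon * (\curl u) = 0$ pointwise. Since $\Omega_k$ is convex (hence star-shaped with respect to any reference point $x_0 \in \Omega_1$), the classical Poincaré lemma produces a smooth potential $\phi_\epsilon \in C^\infty(\Omega_k)$ with $\grad \phi_\epsilon = u_\epsilon$ via the explicit radial integral $\phi_\epsilon(x) = \int_0^1 u_\epsilon(x_0 + t(x-x_0)) \cdot (x-x_0) \, dt$.

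Normalize $\phi_\epsilon$ to have zero mean on a fixed reference ball $B \subset \Omega_1$. The Poincaré--Wirtinger inequality on the bounded convex Lipschitz domain $\Omega_k$ gives $\|\phi_\epsilon\|_{H^1(\Omega_k)} \le C_k \|u_\epsilon\|_{L^2(\Omega_k)} \le C_k \|u\|_{L^2(\Omega)}$, uniformly in $\epsilon$. Weak $H^1(\Omega_k)$ compactness and a diagonal extraction as $\epsilon \to 0$ and $k \to \infty$ then yield a limit $\phi \in H^1_{\mathrm{loc}}(\Omega)$ with $\grad \phi = u$; a final Poincaré--Wirtinger inequality on $\Omega$ itself, valid because $\Omega$ is a bounded convex Lipschitz domain, upgrades $\phi$ to $H^1(\Omega)$. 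The main obstacle is uniform control of the potentials up to the boundary: since the potentials produced by the Poincaré lemma on $\Omega_k$ and $\Omega_{k+1}$ differ only by an additive constant on $\Omega_k$, a single normalization relative to $B$ keeps them compatible as $k$ grows. An equivalent, technically slightly cleaner route is to first extend $u$ by even/odd reflection to a slightly larger rectangle so that the extended field remains curl-free distributionally (a parity check on components), and then to perform one global mollification on that enlargement before applying Poincaré's lemma.
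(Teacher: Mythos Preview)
Your argument is essentially correct and follows a standard route (mollification, the classical Poincar\'e lemma on convex sets, normalisation, and compactness). One small point worth tightening: the ``final Poincar\'e--Wirtinger inequality on $\Omega$'' is usually stated for functions already known to lie in $H^1(\Omega)$, so invoking it to upgrade $\phi$ from $H^1_{\mathrm{loc}}$ to $H^1$ is slightly circular as phrased. The clean fix is to observe that, because all $\Omega_k$ are convex with diameter bounded by $\operatorname{diam}(\Omega)$ and share the fixed reference ball $B$, the Poincar\'e constants $C_k$ can in fact be taken \emph{uniform} in $k$; then Fatou (or monotone convergence of the $H^1(\Omega_k)$ norms) gives $\phi \in H^1(\Omega)$ directly, without a separate appeal to Poincar\'e--Wirtinger on $\Omega$. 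Alternatively, one may quote the Deny--Lions theorem that on a bounded Lipschitz domain every distribution with $L^2$ gradient lies in $H^1$.

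As for comparison with the paper: there is nothing to compare. The paper does not prove Theorem~\ref{thm:continuous-scalar-potential}; it merely states it as a known result and cites \cite[Theorem~I.2.9]{girault1986finite}, \cite[Corollary~2]{schweizer2018friedrichs}, and \cite[Lemma~4.4]{jikov1994homogenization}. Your self-contained argument therefore supplies what the paper deliberately outsources to the literature.
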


\begin{theorem}
\label{thm:continuous-vector-potential}
  A vector field $u \in L^2(\Omega)^d$ satisfies $\div u = 0$ if and only if
  \begin{itemize}
    \item
    there exists a potential $v \in H^1(\Omega)$ satisfying $u = \rot v$
    if $d = 2$,

    \item
    there exists a vector potential $v \in H(\curl; \Omega)$ satisfying $u = \curl v$
    if $d = 3$.
  \end{itemize}
  Here, the rotation/curl of a scalar field $v$ is defined as
  $\rot v = (\partial_2 v, -\partial_1 v)$.
\end{theorem}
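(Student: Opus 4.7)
The plan is to prove the two implications separately: the easy ``if'' direction uses the identities $\div \rot = 0$ in 2D and $\div \curl = 0$ in 3D, which hold distributionally whenever $v \in H^1(\Omega)$ or $v \in H(\curl;\Omega)$, respectively; since the resulting divergence lies in $L^2$, this is immediately an $L^2$ identity and establishes the ``if'' direction in both dimensions.

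For the harder ``only if'' direction I would first assume $u$ is smooth and exploit that $\Omega$, being a box, is convex and in particular star-shaped. In two dimensions I would set
\[
v(x_1,x_2) := \int_0^{x_2} u_1(x_1, s) \, ds \; - \; \int_0^{x_1} u_2(t, 0) \, dt ,
\]
after translating $\Omega$ so that $0 \in \overline\Omega$. The identity $\partial_2 v = u_1$ is immediate, while $-\partial_1 v = u_2$ follows by differentiating under the integral sign and using $\div u = 0$ to rewrite $\int_0^{x_2} \partial_1 u_1(x_1, s) \, ds$ as $u_2(x_1,0) - u_2(x_1, x_2)$, which cancels the boundary correction exactly. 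In three dimensions I would make the gauge choice $v_3 \equiv 0$ and define $v_1, v_2$ by analogous antiderivatives in $x_3$ together with a planar correction in $x_1$; the only non-trivial verification then collapses $\int_0^{x_3}(\partial_1 u_1 + \partial_2 u_2) \, ds$ into boundary values of $u_3$ via $\div u = 0$.

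To reach general $u \in L^2(\Omega)^d$ I would extend $u$ divergence-freely to a slightly larger box and mollify, producing smooth divergence-free approximants $u^\epsilon \to u$ in $L^2$ with potentials $v^\epsilon$ built from the previous step. Normalising by a gauge condition (zero mean in 2D, a Coulomb-type condition $\div v^\epsilon = 0$ with the appropriate tangential behaviour on $\partial\Omega$ in 3D) and applying Poincaré-type inequalities yields uniform bounds in $H^1$ or $H(\curl;\Omega)$, and weak compactness produces the required limit. The main obstacle is this last step in three dimensions: obtaining a usable a priori bound on $v^\epsilon$ requires the right gauge, and the cleanest route is to instead write $v = \curl w$ with $-\Delta w = u$ under appropriate boundary conditions, trading the explicit construction for standard elliptic regularity on the Lipschitz domain $\Omega$, which is precisely the geometric assumption we have built in.
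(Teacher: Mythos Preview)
The paper does not give its own proof of this theorem. It is stated as a classical result and attributed to the literature, specifically \cite[Theorem~I.2.9]{girault1986finite}, \cite[Corollary~2]{schweizer2018friedrichs}, and \cite[Lemma~4.4]{jikov1994homogenization}; the paper then immediately moves on to discrete analogues. So there is nothing in the paper to compare your argument against at the level of technique.

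As a standalone sketch your proposal is largely on the right track: the ``if'' direction is correct as stated, and the explicit antiderivative construction for smooth $u$ on a box is the standard one (the paper itself uses the same path-integral idea in the discrete setting in Sections~\ref{sec:SBP-scalar-potential-integral-2D} and~\ref{sec:SBP-scalar-potential-integral-3D}). The one step that deserves more care is the passage from smooth to general $L^2$ fields. ``Extend $u$ divergence-freely to a slightly larger box and mollify'' is not innocent: a generic $L^2$ extension does not preserve $\div u = 0$, and the usual reflection tricks for a rectangle require matching parities component by component to keep the divergence constraint, which you have not spelled out. You correctly flag the 3D gauge-fixing as the main difficulty and propose the alternative of solving $-\Delta w = u$ with suitable boundary conditions; that route does work (it is essentially what Girault--Raviart do), but then the explicit integral construction and mollification are no longer needed, so the two halves of your plan are somewhat redundant. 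If you commit to the elliptic approach, the whole ``only if'' direction in both dimensions follows cleanly from it without the smooth-case detour.
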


In the rest of this section, discrete analogues of these theorems will be studied
for SBP operators. Before theorems characterising the discrete case can be proved,
some preliminary results have to be obtained at first.

\subsection{Grid Oscillations}

For a nullspace consistent SBP derivative operator $D$, the kernel of its adjoint
operator $D^* = M^{-1} D^T M$ will play an important role in the following.
\begin{lemma}
  For a nullspace consistent SBP derivative operator $D$ in one space dimension,
  $\dim \kernel D^* = 1$.
\end{lemma}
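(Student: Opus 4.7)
The plan is a short linear-algebra argument, exploiting only the invertibility of the mass matrix and the fact that the one-dimensional $D$ is a square operator on the $N$-dimensional space of grid functions.

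First I would unravel the definition $D^* = M^{-1} D^T M$. Because $M$ is symmetric positive definite, the map $\vec{v} \mapsto M\vec{v}$ is a bijection of $\R^N$ onto itself. From $D^* \vec{v} = M^{-1} D^T M \vec{v}$ we read off that $D^* \vec{v} = 0$ if and only if $M\vec{v} \in \kernel D^T$. Hence $\kernel D^* = M^{-1} \kernel D^T$, and in particular $\dim \kernel D^* = \dim \kernel D^T$.

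Next, since $D$ is an $N \times N$ matrix, the rank–nullity theorem together with $\operatorname{rank} D = \operatorname{rank} D^T$ gives $\dim \kernel D^T = N - \operatorname{rank} D^T = N - \operatorname{rank} D = \dim \kernel D$. By the hypothesis that $D$ is nullspace consistent, $\kernel D = \spann \set{\vec{1}}$ is one-dimensional, so chaining the two equalities yields $\dim \kernel D^* = 1$.

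There is essentially no obstacle here: the SBP property \eqref{eq:SBP} is not even required, only the invertibility of $M$ and the nullspace-consistency assumption. The one point worth checking before writing the proof is that $D$ is genuinely square (so that transposition preserves the nullity), which follows from the collocated setting in which every grid function is represented by a vector of length $N$.
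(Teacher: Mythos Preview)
Your proof is correct and essentially the same as the paper's: both are one-line rank--nullity arguments showing $\dim\kernel D^* = \dim\kernel D = 1$. The only cosmetic difference is that the paper phrases the first step as $\kernel D^* = (\image D)^\perp$ in the $M$-inner product, whereas you unwrap $D^* = M^{-1}D^T M$ to get $\dim\kernel D^* = \dim\kernel D^T$ directly.
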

\begin{proof}
  For $N$ grid nodes,
  $\dim \kernel D^* = \dim (\image D)^\perp = N - \dim \image D = \dim \kernel D
  = \dim \spann \set{\vec{1}} = 1$.
\end{proof}
\begin{definition}
  A fixed but arbitrarily chosen basis vector of $\kernel D^*$ for a nullspace
  consistent SBP operator $D$ is denoted as $\osc$. In two space dimensions,
  \begin{equation}
  \begin{gathered}
  \begin{aligned}
    \osc_1 &= \osc_x \otimes \vec{1}, &
    \osc_2 &= \vec{1} \otimes \osc_y,
  \end{aligned}
    \\
    \osc_{12} = \osc_x \otimes \osc_y,
  \end{gathered}
  \end{equation}
  and in three space dimensions
  \begin{equation}
  \begin{gathered}
  \begin{aligned}
    \osc_1 &= \osc_x \otimes \vec{1} \otimes \vec{1}, &
    \osc_2 &= \vec{1} \otimes \osc_y \otimes \vec{1}, &
    \osc_3 &= \vec{1} \otimes \vec{1} \otimes \osc_z,
    \\
    \osc_{12} &= \osc_x \otimes \osc_y \otimes \vec{1}, &
    \osc_{13} &= \osc_x \otimes \vec{1} \otimes \osc_z, &
    \osc_{23} &= \vec{1} \otimes \osc_y \otimes \osc_z,
  \end{aligned}
    \\
    \osc_{123} = \osc_x \otimes \osc_y \otimes \osc_z.
  \end{gathered}
  \end{equation}
\end{definition}

The name $\osc$ shall remind of (grid) oscillations, since the kernel of $D^*$ is
orthogonal to the image of $D$ which contains all sufficiently resolved functions.
\begin{example}
\label{ex:SBP-2-osc}
  For the classical second order SBP operator of Example~\ref{ex:SBP-2},
  \begin{equation}
    D^*
    =
    M^{-1} D^T M
    =
    \frac{1}{2 \Delta x}
    \begin{pmatrix}
      -2 & -2 \\
      1 & 0 & -1 \\
      & \ddots & \ddots & \ddots \\
      && 1 & 0 & -1 \\
      &&& 2 & 2
    \end{pmatrix} \in \R^{N \times N},
  \end{equation}
  and $\kernel D^* = \spann \set{\osc}$, where
  \begin{equation}
    \begin{cases}
      \osc^{(1)} = \osc^{(3)} = \dots = \osc^{(N)} = -\osc^{(2)} = -\osc^{(4)} = \dots = -\osc^{(N-1)},
      & N \text{ odd},
      \\
      \osc^{(1)} = \osc^{(3)} = \dots = \osc^{(N-1)} = -\osc^{(2)} = -\osc^{(4)} = \dots = -\osc^{(N)},
      & N \text{ even}.
    \end{cases}
  \end{equation}
  Thus, $\osc$ represents classical grid oscillations. Grid oscillations for the
  SBP derivative operators of \cite{mattsson2004summation} are visualised in
  Figure~\ref{fig:grid_oscillations}. These grid oscillations alternate between
  $+1$ and $-1$ in the interior of the domain. Near the boundaries, the values
  depend on the order and boundary closure of the scheme.
\end{example}

\begin{figure}[ht]
\centering
  \begin{subfigure}{0.49\textwidth}
    \centering
    \includegraphics[width=\textwidth]{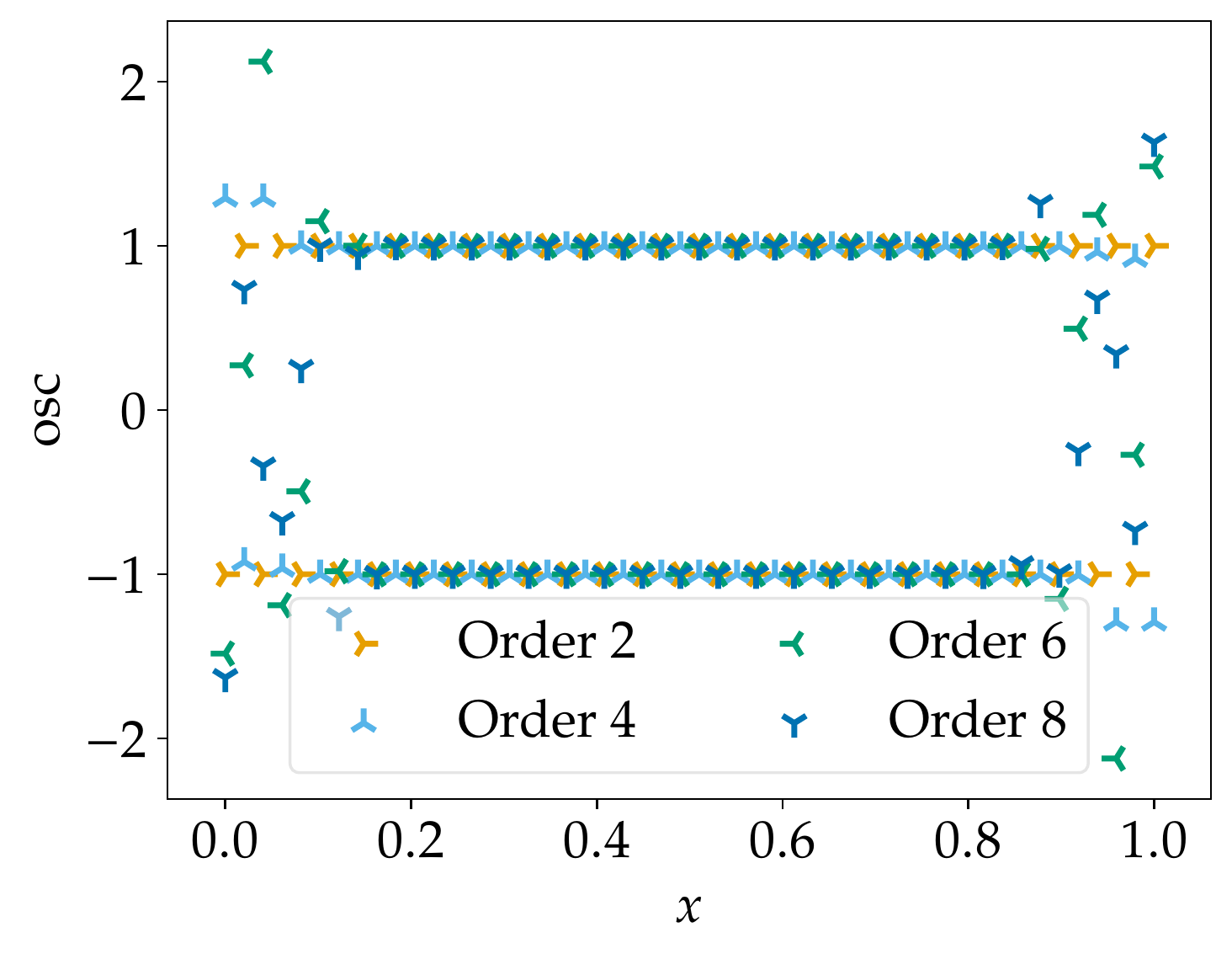}
    \caption{$N = 50$ grid points.}
  \end{subfigure}%
  \begin{subfigure}{0.49\textwidth}
    \centering
    \includegraphics[width=\textwidth]{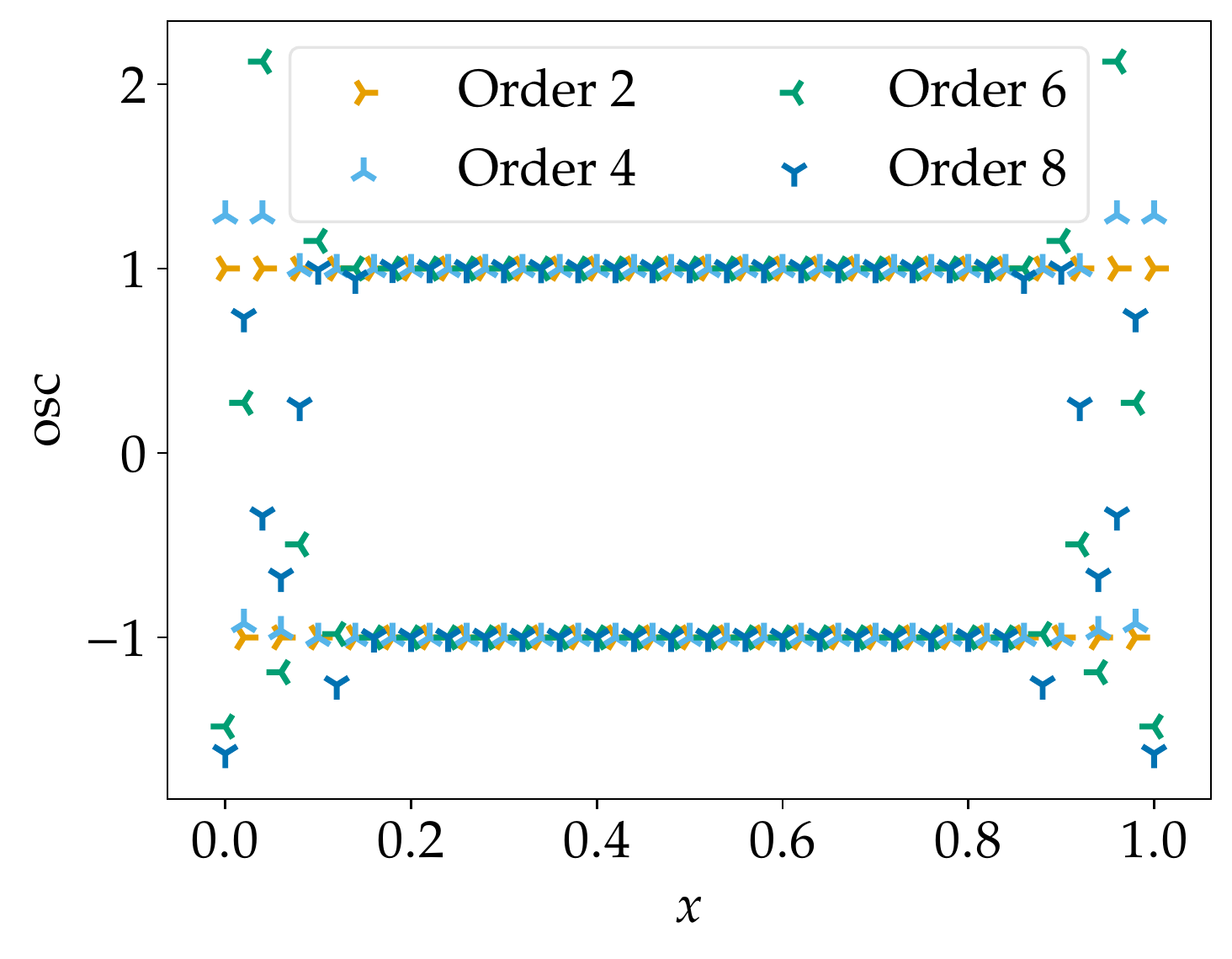}
    \caption{$N = 51$ grid points.}
  \end{subfigure}
  \caption{Grid oscillations for the SBP derivative operators of \cite{mattsson2004summation}
           and $N \in \set{50, 51}$ grid points.}
  \label{fig:grid_oscillations}
\end{figure}

\begin{example}
  For a nodal Lobatto Legendre (global) spectral method using polynomials of degree
  $\leq p$ and their exact derivatives, grid oscillations are given by the highest
  Legendre mode existing on the grid. Indeed, grid oscillations have to be polynomials
  of degree $\leq p$, orthogonal to all polynomials of degree $\leq p-1$.
\end{example}

\subsection{Two Space Dimensions}
\label{sec:SBP-potentials-2D}

In this section, the kernels of the discrete divergence and curl operators will be
characterised. It will become clear that scalar/vector potentials of discretely
curl/divergence free vector fields exist if and only if no grid oscillations are
present.

\begin{theorem}
\label{thm:SBP-scalar-potential-2D}
  Suppose that nullspace consistent tensor product SBP operators are applied
  in two space dimensions.
  Then,
  \begin{equation}
    \dim \image \grad = N_1 N_2 - 1 < N_1 N_2 + 1 = \dim \kernel \curl
  \end{equation}
  and the kernel of the discrete curl operator can be decomposed into the direct
  orthogonal sum
  \begin{equation}
  \label{eq:SBP-kernel-curl-2D}
    \kernel \curl
    =
    \image \grad \oplus
    \spann \set{
      \begin{pmatrix} \osc_1 \\ \vec{0} \end{pmatrix},
      \begin{pmatrix} \vec{0} \\ \osc_2 \end{pmatrix}
    }.
  \end{equation}
\end{theorem}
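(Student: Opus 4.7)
The strategy is dimension counting combined with a short list of explicit containments and orthogonalities, with all information extracted from nullspace consistency and the tensor product structure of the operators.

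First I would pin down $\dim \image \grad$. The kernel of $\grad$ is $\kernel D_1 \cap \kernel D_2$, and the tensor product form of $D_1 = D_x \otimes \I_y$, $D_2 = \I_x \otimes D_y$ combined with nullspace consistency of the one-dimensional derivatives gives $\kernel D_1 = \spann\set{\vec{1}} \otimes \R^{N_2}$ and $\kernel D_2 = \R^{N_1} \otimes \spann\set{\vec{1}}$, whose intersection is $\spann\set{\vec{1} \otimes \vec{1}}$. Rank--nullity for $\grad\colon \R^{N_1 N_2} \to \R^{2 N_1 N_2}$ then yields $\dim \image \grad = N_1 N_2 - 1$.

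Next I would verify that every summand on the right-hand side of \eqref{eq:SBP-kernel-curl-2D} sits inside $\kernel \curl$. For $\image \grad$ this is just the identity $\curl \grad = 0$ from Remark~\ref{rem:discrete-derivatives-commute}. For the oscillation pieces, a direct computation using $D_y \vec{1} = \vec{0}$ gives $\curl (\osc_1, \vec{0}) = -D_2 (\osc_x \otimes \vec{1}) = -\osc_x \otimes (D_y \vec{1}) = \vec{0}$, and analogously $\curl(\vec{0}, \osc_2) = \vec{0}$. The key algebraic step is the orthogonality of these summands with respect to the natural scalar product induced by $M$ on scalar fields and by $M \oplus M$ on two-component vector fields, which automatically implies directness. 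Using the tensor product identity $D_1^* = M^{-1} D_1^T M = D_x^* \otimes \I_y$ (which follows from $M = M_x \otimes M_y$) together with $\osc_x \in \kernel D_x^*$, one obtains $D_1^* \osc_1 = (D_x^* \osc_x) \otimes \vec{1} = \vec{0}$, hence $\scp{\grad \vec{\phi}}{(\osc_1, \vec{0})} = \scp{\vec{\phi}}{D_1^* \osc_1}_M = 0$. The analogous computation covers $(\vec{0}, \osc_2)$, and the two oscillation vectors are $M$-orthogonal to each other because of their disjoint block structure.

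To close the argument I would match dimensions by computing $\dim \image \curl$ via its $M$-orthogonal complement. A scalar $\vec{\psi}$ lies in $(\image \curl)^\perp$ iff $D_1^* \vec{\psi} = \vec{0} = D_2^* \vec{\psi}$; using $D_1^* = D_x^* \otimes \I_y$ and $D_2^* = \I_x \otimes D_y^*$, the first condition forces $\vec{\psi} \in \spann\set{\osc_x} \otimes \R^{N_2}$ and the second forces $\vec{\psi} \in \R^{N_1} \otimes \spann\set{\osc_y}$, so their intersection equals $\spann\set{\osc_x \otimes \osc_y} = \spann\set{\osc_{12}}$, a one-dimensional space. Therefore $\dim \image \curl = N_1 N_2 - 1$ and $\dim \kernel \curl = 2 N_1 N_2 - (N_1 N_2 - 1) = N_1 N_2 + 1$. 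Since the orthogonal direct sum I constructed already has matching dimension $(N_1 N_2 - 1) + 1 + 1$ and is contained in $\kernel \curl$, equality follows. I expect the main obstacle to be the careful bookkeeping of inner products and adjoints (which $M$ acts where, and how it interacts with tensor products) so that the kernels decompose cleanly along the factors; once that is in place, the statement reduces to nullspace consistency applied factor by factor.
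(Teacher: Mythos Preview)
Your proposal is correct and follows essentially the same route as the paper: compute $\dim\image\grad$ from $\kernel\grad=\spann\{\vec{1}\}$, compute $\dim\kernel\curl$ by identifying the one-dimensional space $\kernel D_1^*\cap\kernel D_2^*=\spann\{\osc_{12}\}$ via the adjoint, verify the containments $\image\grad\subseteq\kernel\curl$ and $\text{span}\subseteq\kernel\curl\cap(\image\grad)^\perp$, and conclude by matching dimensions. Your write-up is in fact more explicit than the paper's about the tensor-product bookkeeping for the adjoints and about checking that the oscillation vectors lie in $\kernel\curl$, but the underlying argument is identical.
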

\begin{proof}
  Since the operator is nullspace consistent, $\kernel \grad = \spann \set{\vec{1}}$
  and
  \begin{equation}
    \dim \image \grad = N_1 N_2 - \dim \kernel \grad = N_1 N_2 - 1.
  \end{equation}
  Similarly,
  \begin{equation}
  \begin{aligned}
    \dim \kernel \curl
    &=
    \dim \kernel \begin{pmatrix} -D_2, D_1 \end{pmatrix}
    =
    \dim \left( \image \begin{pmatrix} -D_2^* \\ D_1^* \end{pmatrix} \right)^\perp
    \\
    &=
    2 N_1 N_2 - \dim \image \begin{pmatrix} -D_2^* \\ D_1^* \end{pmatrix}
    =
    N_1 N_2 + \dim \kernel \begin{pmatrix} -D_2^* \\ D_1^* \end{pmatrix}
    =
    N_1 N_2 + 1.
  \end{aligned}
  \end{equation}

  Since tensor product derivative operators commute, $\image \grad \subseteq \kernel \curl$.
  Additionally,
  \begin{equation}
    \image \grad
    =
    \image \begin{pmatrix} D_1 \\ D_2 \end{pmatrix}
    =
    \left( \kernel \begin{pmatrix} D_1^*, D_2^* \end{pmatrix} \right)^\perp
    =
    (\kernel \grad^*)^\perp
  \end{equation}
  and the span in \eqref{eq:SBP-kernel-curl-2D} is contained in $\kernel \grad^*$.
\end{proof}

\begin{theorem}
\label{thm:SBP-vector-potential-2D}
  Suppose that nullspace consistent tensor product SBP operators are applied
  in two space dimensions.
  Then,
  \begin{equation}
    \dim \image \rot = N_1 N_2 - 1 < N_1 N_2 + 1 = \dim \kernel \div
  \end{equation}
  and the kernel of the discrete divergence operator can be decomposed into the
  direct orthogonal sum
  \begin{equation}
    \kernel \div
    =
    \image \rot \oplus
    \spann \set{
      \begin{pmatrix} \vec{0} \\ \osc_1 \end{pmatrix},
      \begin{pmatrix} \osc_2 \\ \vec{0} \end{pmatrix}
    }.
  \end{equation}
\end{theorem}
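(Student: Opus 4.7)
The plan is to follow the same template as the proof of Theorem~\ref{thm:SBP-scalar-potential-2D}, swapping the roles of $(\grad, \curl)$ with $(\rot, \div)$ and using the fact that $\rot$ is (up to sign reordering) the adjoint-pattern of $\curl$, so the dimension count transposes.

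First I would compute the two dimensions separately. For $\image \rot$, observe that $\rot = (D_2, -D_1)^T$ maps a scalar field to a vector field, and $\kernel \rot = \kernel D_1 \cap \kernel D_2 = \spann\set{\vec{1}}$ by nullspace consistency and the tensor-product structure, so the rank–nullity theorem gives $\dim \image \rot = N_1 N_2 - 1$. For $\kernel \div$, use the orthogonal-complement identity
\begin{equation*}
  \kernel \div
  = \kernel \begin{pmatrix} D_1, D_2 \end{pmatrix}
  = \left( \image \begin{pmatrix} D_1^* \\ D_2^* \end{pmatrix} \right)^\perp,
\end{equation*}
together with $\dim \image \div^* = 2 N_1 N_2 - \dim \kernel \div^*$, to reduce the count to $\dim \kernel (D_1^*, D_2^*)^T = \dim( \kernel D_1^* \cap \kernel D_2^*)$. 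Here I would use $\kernel D_1^* = \spann\set{\osc_x} \otimes \R^{N_y}$ and $\kernel D_2^* = \R^{N_x} \otimes \spann\set{\osc_y}$, whose intersection is $\spann\set{\osc_x \otimes \osc_y}$ and hence one-dimensional. This yields $\dim \kernel \div = N_1 N_2 + 1$.

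Next I would verify the inclusion $\image \rot \subseteq \kernel \div$, which follows immediately from $\div \rot = D_1 D_2 - D_2 D_1 = 0$ by commutativity of tensor-product derivative operators (Remark~\ref{rem:discrete-derivatives-commute}). Orthogonality (in the $M$-inner product) of $\image \rot$ to the two oscillation vectors is obtained from $\image \rot = (\kernel \rot^*)^\perp$ with $\rot^* = (D_2^*, -D_1^*)$: a direct computation gives
\begin{equation*}
  \rot^* \begin{pmatrix} \vec{0} \\ \osc_1 \end{pmatrix}
  = -D_1^* \osc_1
  = -(D_x^* \osc_x) \otimes \vec{1}
  = \vec{0},
  \qquad
  \rot^* \begin{pmatrix} \osc_2 \\ \vec{0} \end{pmatrix}
  = D_2^* \osc_2
  = \vec{1} \otimes (D_y^* \osc_y)
  = \vec{0},
\end{equation*}
so both oscillation vectors lie in $\kernel \rot^*$ and are therefore $M$-orthogonal to $\image \rot$. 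They are trivially linearly independent since they occupy disjoint components of $\R^{2 N_1 N_2}$.

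Finally I would conclude by dimension count: the right-hand side is a direct orthogonal sum of dimension $(N_1 N_2 - 1) + 2 = N_1 N_2 + 1$, which matches $\dim \kernel \div$, forcing equality. I do not anticipate a genuine obstacle here — the work is essentially a mechanical transposition of the previous theorem — but the one step that deserves care is identifying $\kernel D_1^* \cap \kernel D_2^*$ correctly, since each $\kernel D_i^*$ is multidimensional, and it is only their intersection that is one-dimensional and spanned by $\osc_{12}$; getting this wrong would throw off the dimension count of $\kernel \div$.
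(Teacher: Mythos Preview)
Your argument is correct in outline and arrives at the right conclusion, with two small slips worth flagging. First, the rank--nullity line ``$\dim \image \div^* = 2 N_1 N_2 - \dim \kernel \div^*$'' has the wrong domain dimension: $\div^*$ acts on scalar fields, so the correct identity is $\dim \image \div^* = N_1 N_2 - \dim \kernel \div^*$; combined with $\dim \kernel \div = 2 N_1 N_2 - \dim \image \div^*$ this still gives $N_1 N_2 + 1$, so the end result is unaffected. Second, for the final dimension count to force equality you need the two oscillation vectors to lie \emph{in} $\kernel \div$, not only to be orthogonal to $\image \rot$; this is an immediate check (e.g.\ $D_2 \osc_1 = \osc_x \otimes D_y \vec{1} = \vec{0}$), but it should be stated.

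The paper takes a shorter route than yours. Rather than re-running the dimension count, it records the algebraic identities
\[
  \rot = \begin{pmatrix} 0 & \I \\ -\I & 0 \end{pmatrix} \grad,
  \qquad
  \div = \curl \begin{pmatrix} 0 & -\I \\ \I & 0 \end{pmatrix},
\]
so that $\image \rot$, $\kernel \div$, and the oscillation span are obtained from $\image \grad$, $\kernel \curl$, and the span in Theorem~\ref{thm:SBP-scalar-potential-2D} by applying the orthogonal block matrix $\left(\begin{smallmatrix} 0 & \I \\ -\I & 0 \end{smallmatrix}\right)$. Everything then follows at once from the previous theorem. Your approach is a direct replay of that theorem's proof with the operators swapped; it is longer but entirely self-contained, whereas the paper's reduction makes the two-dimensional $\rot/\div$ case a literal corollary of the $\grad/\curl$ case.
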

\begin{proof}
  The arguments are basically the same as in the proof of
  Theorem~\ref{thm:SBP-scalar-potential-2D}, since
  \begin{equation}
    \rot
    =
    \begin{pmatrix} D_2 \\ -D_1 \end{pmatrix}
    =
    \begin{pmatrix} 0 & \I \\ -\I & 0 \end{pmatrix}
    \begin{pmatrix} D_1 \\ D_2 \end{pmatrix}
    =
    \begin{pmatrix} 0 & \I \\ -\I & 0 \end{pmatrix} \grad
  \end{equation}
  and
  \begin{equation}
    \div
    =
    \begin{pmatrix} D_1, D_2 \end{pmatrix}
    =
    \begin{pmatrix} -D_2, D_1 \end{pmatrix}
    \begin{pmatrix} 0 & -\I \\ \I & 0 \end{pmatrix}
    =
    \curl \begin{pmatrix} 0 & -\I \\ \I & 0 \end{pmatrix}.
    \qedhere
  \end{equation}
\end{proof}

\subsection{Scalar Potentials via Integrals in Two Space Dimensions}
\label{sec:SBP-scalar-potential-integral-2D}

Theorem~\ref{thm:SBP-scalar-potential-2D} shows that not every discretely curl free
vector field is the gradient of a scalar potential and specifies even the orthogonal
complement of $\image \grad$ in $\kernel \curl$ in two space dimensions.
In the continuous setting of classical vector calculus, scalar potentials are
often constructed explicitly using integrals. Hence, it is interesting to review
this construction and its discrete analogue, yielding another proof of
\eqref{eq:SBP-kernel-curl-2D}.

Classically, a scalar potential of a (sufficiently smooth) curl free vector field
$u$ in the box $[0, x_{1,\mathrm{max}}] \times [0, x_{2,\mathrm{max}}]$ can be
defined via
\begin{equation}
\label{eq:scalar-potential-integral-2D}
  \phi(x) = \int_0^{x_1} u_1(\xi, 0) \dif \xi + \int_0^{x_2} u_2(x_1, \eta) \dif \eta.
\end{equation}
Indeed, $\partial_2 \phi(x) = u_2(x)$ and
\begin{equation}
  \partial_1 \phi(x)
  =
  u_1(x_1, 0)
  + \int_0^{x_2} \underbrace{\partial_1 u_2}_{= \partial_2 u_1}(x_1, \eta) \dif \eta
  =
  u_1(x_1, x_2).
\end{equation}

Consider now a discretely curl free vector field $\vec{u}$ perpendicular to
$\spann \set{ \begin{pmatrix} \osc_1 \\ \vec{0} \end{pmatrix},
\begin{pmatrix} \vec{0} \\ \osc_2 \end{pmatrix} }$,
i.e. a discrete vector field $\vec{u}$ satisfying
\begin{equation}
\label{eq:SBP-scalar-potential-integral-2D-conditions}
  D_1 \vec{u}_2 = D_2 \vec{u}_1,
  \qquad
  \vec{u}_1 \perp \osc_1,
  \qquad
  \vec{u}_2 \perp \osc_2.
\end{equation}
Since the second integral in \eqref{eq:scalar-potential-integral-2D} is the inverse
of the partial derivative $\partial_2$, the discrete $\vec{u}_2$ must be in $\image D_2$
in order to mimic \eqref{eq:scalar-potential-integral-2D} discretely.
\begin{lemma}
\label{lem:SBP-scalar-potential-integral-2D-1}
  Suppose that nullspace consistent tensor product SBP operators are applied
  in two space dimensions.
  If the discrete vector field $\vec{u}$ satisfies
  \eqref{eq:SBP-scalar-potential-integral-2D-conditions},
  $\vec{u}_i \in \image D_i$, $i \in \set{1, 2}$.
\end{lemma}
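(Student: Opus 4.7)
The plan is to identify $\image D_i$ as the $M$-orthogonal complement of $\kernel D_i^*$ and show that the hypotheses in \eqref{eq:SBP-scalar-potential-integral-2D-conditions} together exhaust that complement. I will focus on $\vec{u}_1 \in \image D_1$; the statement for $\vec{u}_2$ follows by symmetry of the roles of $x$ and $y$.

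First I would exploit the tensor product structure to describe $\kernel D_1^*$. Since $D_1 = D_x \otimes \I_y$ and $M = M_x \otimes M_y$, one obtains $D_1^* = D_x^* \otimes \I_y$, hence
\begin{equation}
  \kernel D_1^* = \spann \set{ \osc_x \otimes \vec{v} \;|\; \vec{v} \in \R^{N_y} },
\end{equation}
a subspace of dimension $N_y$. Thus $\vec{u}_1 \in \image D_1 = (\kernel D_1^*)^{\perp_M}$ is equivalent to $\scp{\vec{u}_1}{\osc_x \otimes \vec{v}}_M = 0$ for every $\vec{v} \in \R^{N_y}$.

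Next, I would use the discrete curl-free condition $D_2 \vec{u}_1 = D_1 \vec{u}_2$ to produce most of this orthogonality. For any test vector $\vec{w} \in \R^{N_y}$,
\begin{equation}
  \scp{\vec{u}_1}{\osc_x \otimes D_y^* \vec{w}}_M
  = \scp{\vec{u}_1}{D_2^*(\osc_x \otimes \vec{w})}_M
  = \scp{D_2 \vec{u}_1}{\osc_x \otimes \vec{w}}_M
  = \scp{D_1 \vec{u}_2}{\osc_x \otimes \vec{w}}_M,
\end{equation}
and the last expression equals $\scp{\vec{u}_2}{(D_x^* \osc_x) \otimes \vec{w}}_M = 0$ since $D_x^* \osc_x = 0$. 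Consequently $\vec{u}_1$ is $M$-orthogonal to $\osc_x \otimes \vec{z}$ for every $\vec{z} \in \image D_y^* = (\kernel D_y)^{\perp_{M_y}} = \spann\set{\vec{1}}^{\perp_{M_y}}$, a subspace of dimension $N_y - 1$.

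Finally, I would close the remaining one-dimensional gap by the orthogonality hypothesis $\vec{u}_1 \perp \osc_1$. Any $\vec{v} \in \R^{N_y}$ admits the $M_y$-orthogonal decomposition $\vec{v} = c \vec{1} + \vec{z}$ with $c = \scp{\vec{v}}{\vec{1}}_{M_y}/\scp{\vec{1}}{\vec{1}}_{M_y}$ and $\vec{z} \in \image D_y^*$, so that $\osc_x \otimes \vec{v} = c\, \osc_1 + \osc_x \otimes \vec{z}$. Both contributions to $\scp{\vec{u}_1}{\osc_x \otimes \vec{v}}_M$ then vanish, proving $\vec{u}_1 \in \image D_1$. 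The only delicate point in this argument is the dimension-counting: the kernel $\kernel D_1^*$ is $N_y$-dimensional, yet the curl-free hypothesis alone only yields $N_y - 1$ linear constraints, and it is precisely the single scalar condition $\vec{u}_1 \perp \osc_1$ (together with the analogous $\vec{u}_2 \perp \osc_2$ for the symmetric statement) that supplies the missing direction and makes the discrete analogue of \eqref{eq:scalar-potential-integral-2D} feasible.
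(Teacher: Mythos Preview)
Your proof is correct and follows essentially the same strategy as the paper: the curl-free condition reduces the obstruction to a single direction, and the explicit hypothesis $\vec{u}_i \perp \osc_i$ closes that gap. The only difference is presentational: the paper decomposes $\vec{u}_i = D_i \vec{v}_i + \vec{w}_i$ with $\vec{w}_i \in \kernel D_i^*$ and shows $\norm{D_j \vec{w}_i}^2 = 0$ (so $\vec{w}_i \in \kernel D_j \cap \kernel D_i^* = \spann\set{\osc_i}$), whereas you test $\vec{u}_1$ directly against all of $\kernel D_1^* = \osc_x \otimes \R^{N_y}$ and observe that the curl-free condition covers $\osc_x \otimes \image D_y^*$ while $\vec{u}_1 \perp \osc_1$ covers $\osc_x \otimes \spann\set{\vec{1}}$.
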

\begin{proof}
  It suffices to consider the case $i = 2$ ($i = 1$ is similar).

  There are $\vec{v}_2, \vec{w}_2$ such that
  $\vec{u}_2 = D_2 \vec{v}_2 + \vec{w}_2$, where
  $\vec{w}_2 \in (\image D_2)^\perp = \kernel D_2^*$.
  To show that $\vec{w}_2 = \vec{0}$, use $D_2 \vec{u}_1 = D_1 \vec{u}_2 = D_1 D_2 \vec{v}_2 + D_1 \vec{w}_2$ and
  calculate
  \begin{multline}
    \norm{D_1 \vec{w}_2}^2
    =
    \scp{D_1 \vec{w}_2}{D_2 \vec{u}_1 - D_1 D_2 \vec{v}_2}
    =
    \scp{\vec{w}_2}{D_1^* D_2 \vec{u}_1 - D_1^* D_1 D_2 \vec{v}_2}
    \\
    =
    \scp{\vec{w}_2}{D_2 \bigl( D_1^* \vec{u}_1 - D_1^* D_1 \vec{v}_2 \bigr)}
    =
    0.
  \end{multline}
  Therefore, $\vec{w}_2 \in \kernel D_1 = \spann \set{\vec{1}} \otimes \R^{N_2}$. However,
  $\vec{w}_2 \in \kernel D_2^* = \R^{N_1} \otimes \spann \set{\osc_y}$ by definition.
  Hence, $\vec{w}_2 \in \spann \set{\vec{1} \otimes \osc_y} = \spann \set{\osc_2}$. Finally,
  using $\vec{u}_2 \perp \osc_2$ yields
  \begin{equation}
    0
    =
    \scp{\vec{u}_2}{\osc_2}
    =
    \scp{D_2 \vec{v}_2 + \vec{w}_2}{\osc_2}
    =
    \scp{\vec{v}_2}{D_2^* \osc_2} + \scp{\vec{w}_2}{\osc_2}
    =
    \scp{\vec{w}_2}{\osc_2},
  \end{equation}
  since $\osc_2 \in \kernel D_2^*$. Thus, $\vec{w}_2 = \vec{0}$.
\end{proof}

Next, discrete inverse operators of the partial derivatives are needed in order
to mimic the integrals in \eqref{eq:scalar-potential-integral-2D}. At first, the
one dimensional setting will be studied in the following.

Consider a nullspace consistent SBP derivative operator $D$ on the interval
$[0, x_{\mathrm{max}}]$ using $N$ grid points and the corresponding subspaces
\begin{equation}
  V_0 = \set{\vec{u} \in \R^N | \vec{u}(x = 0) = 0 },
  \quad
  V_1 = \set{\vec{u} \in \R^N | \exists \vec{v} \in \R^N \colon \vec{u} = D \vec{v}}.
\end{equation}
Here and in the following, $\vec{u}(x = 0)$ denotes the value of the discrete function
$\vec{u}$ at the corresponding grid points. In the one-dimensional case, $\vec{u}(x = 0) = \vec{u}^{(1)}$
is the first coefficient of $\vec{u}$. This notation is useful in several space dimensions
to refer to values at hyperplanes and other subspaces.

Clearly, $D\colon V_0 \to V_1$ is surjective. Because of nullspace consistency,
$D\colon V_0 \to V_1$ is even bijective and hence invertible. Denote the inverse
operator as $D^{-1}\colon V_1 \to V_0$.
In multiple space dimensions, the discrete partial derivative operators and their
inverse operators are defined analogously using tensor products.
Now, everything is set to provide another proof of \eqref{eq:SBP-kernel-curl-2D}.
\begin{lemma}
\label{lem:SBP-scalar-potential-integral-2D-2}
  Suppose that nullspace consistent tensor product SBP operators are applied
  in two space dimensions.
  If the discrete vector field $\vec{u}$ satisfies
  \eqref{eq:SBP-scalar-potential-integral-2D-conditions},
  there is a discrete scalar potential $\vec{\phi}$ of $\vec{u}$.
\end{lemma}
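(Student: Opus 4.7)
The plan is to mimic the classical construction \eqref{eq:scalar-potential-integral-2D} using the discrete inverse operators $D_2^{-1}$ and $D_x^{-1}$ defined above. By Lemma~\ref{lem:SBP-scalar-potential-integral-2D-1}, both $\vec{u}_1 \in \image D_1$ and $\vec{u}_2 \in \image D_2$, so these inverses can be applied to the relevant pieces of $\vec{u}$. A preliminary observation I would use is that if $\vec{u}_1 = D_1 \vec{v}_1$, then restricting to the hyperplane $x_2 = 0$ yields $\vec{u}_1(x_2 = 0) = D_x \vec{v}_1(x_2 = 0) \in \image D_x$, so the one-dimensional inverse $D_x^{-1}$ is also applicable to $\vec{u}_1(x_2 = 0)$.

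Concretely, I would set $\vec{\psi} = D_2^{-1} \vec{u}_2$ and $\vec{\chi} = D_x^{-1} \vec{u}_1(x_2 = 0)$ and define the candidate potential
\begin{equation*}
  \vec{\phi} = \vec{\psi} + \vec{\chi} \otimes \vec{1},
\end{equation*}
which is the discrete analogue of \eqref{eq:scalar-potential-integral-2D} with the order of the two integrals swapped. The identity $D_2 \vec{\phi} = \vec{u}_2$ is then immediate, since $D_2 \vec{\psi} = \vec{u}_2$ by construction and $D_2(\vec{\chi} \otimes \vec{1}) = \vec{0}$. For the remaining identity $D_1 \vec{\phi} = \vec{u}_1$, I would exploit commutativity of the tensor-product derivatives together with the curl-free condition to compute
\begin{equation*}
  D_2 \bigl( D_1 \vec{\psi} - \vec{u}_1 \bigr)
  = D_1 D_2 \vec{\psi} - D_2 \vec{u}_1
  = D_1 \vec{u}_2 - D_2 \vec{u}_1
  = \vec{0}.
\end{equation*}
Since $\kernel D_2 = \R^{N_1} \otimes \spann \set{\vec{1}}$, this forces $D_1 \vec{\psi} - \vec{u}_1$ to be constant in the $x_2$-direction; evaluating at $x_2 = 0$, where $\vec{\psi}$ vanishes by construction of $D_2^{-1}$, identifies this constant as $-\vec{u}_1(x_2 = 0)$. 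The contribution $D_1(\vec{\chi} \otimes \vec{1}) = (D_x \vec{\chi}) \otimes \vec{1} = \vec{u}_1(x_2 = 0) \otimes \vec{1}$ then cancels this defect exactly, giving $D_1 \vec{\phi} = \vec{u}_1$ and hence $\grad \vec{\phi} = \vec{u}$.

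The main obstacle is ensuring that the inverses $D_2^{-1} \vec{u}_2$ and $D_x^{-1} \vec{u}_1(x_2 = 0)$ are well defined on the right subspaces; this is precisely what Lemma~\ref{lem:SBP-scalar-potential-integral-2D-1} combined with the restriction argument above provides, and it is here that the orthogonality hypotheses $\vec{u}_1 \perp \osc_1$ and $\vec{u}_2 \perp \osc_2$ enter, albeit only indirectly. The remaining steps are purely algebraic and rely only on nullspace consistency (to identify $\kernel D_2$ with $\R^{N_1} \otimes \spann \set{\vec{1}}$) and on commutativity of the tensor-product derivatives.
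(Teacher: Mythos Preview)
Your proof is correct and follows essentially the same construction as the paper: the candidate potential $\vec{\phi} = D_2^{-1}\vec{u}_2 + \bigl(D_x^{-1}\vec{u}_1(x_2=0)\bigr)\otimes\vec{1}$ is exactly the paper's $\vec{\phi} = (D_1^{-1}\vec{u}_1)(x_2=0)\otimes\vec{1} + D_2^{-1}\vec{u}_2$, just with the order of the summands swapped and the one-dimensional inverse written out explicitly. The only difference is in the verification of $D_1\vec{\phi}=\vec{u}_1$: the paper commutes $D_1$ past $D_2^{-1}$ and uses the identity $D_2^{-1}D_2\vec{u}_1 = \vec{u}_1 - \vec{u}_1(x_2=0)\otimes\vec{1}$, whereas you argue via $\kernel D_2 = \R^{N_1}\otimes\spann\{\vec{1}\}$ and evaluation at $x_2=0$; both routes are short and equivalent.
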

\begin{corollary}
  Suppose that nullspace consistent tensor product SBP operators are applied
  in two space dimensions.
  Then, $\dim \kernel \curl = \dim \image \grad + 2$.
\end{corollary}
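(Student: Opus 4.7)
The plan is to re-derive the dimension formula $\dim \kernel \curl = \dim \image \grad + 2$ (originally obtained by direct counting in Theorem~\ref{thm:SBP-scalar-potential-2D}) as an immediate byproduct of the integral construction in Lemma~\ref{lem:SBP-scalar-potential-integral-2D-2}. That lemma already exhibits an explicit two-dimensional obstruction to having a scalar potential; the task is to identify it with a genuine orthogonal complement of $\image \grad$ inside $\kernel \curl$.

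First I would verify that the two candidate oscillation vectors $\begin{pmatrix} \osc_1 \\ \vec{0} \end{pmatrix}$ and $\begin{pmatrix} \vec{0} \\ \osc_2 \end{pmatrix}$ actually lie in $\kernel \curl$. This is a short tensor computation using $D_2 \osc_1 = (\I_x \otimes D_y)(\osc_x \otimes \vec{1}) = \vec{0}$ and analogously $D_1 \osc_2 = \vec{0}$, since $D_x \vec{1} = D_y \vec{1} = \vec{0}$ by nullspace consistency. Simultaneously, both vectors are orthogonal to $\image \grad$: indeed $\grad^* \begin{pmatrix} \osc_1 \\ \vec{0} \end{pmatrix} = D_1^* \osc_1 = \vec{0}$ because $\osc_1 = \osc_x \otimes \vec{1} \in \kernel D_1^*$, and the argument for the second vector is the same. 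Their linear independence is immediate from the disjoint component support.

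Next, given an arbitrary $\vec{u} \in \kernel \curl$, I would orthogonally project out the components along these two vectors to obtain $\vec{u}_\parallel = \vec{u} - \alpha \begin{pmatrix} \osc_1 \\ \vec{0} \end{pmatrix} - \beta \begin{pmatrix} \vec{0} \\ \osc_2 \end{pmatrix}$ with $(\vec{u}_\parallel)_1 \perp \osc_1$ and $(\vec{u}_\parallel)_2 \perp \osc_2$. Since the subtracted piece already lies in $\kernel \curl$ by the previous step, $\vec{u}_\parallel$ is still curl free and therefore satisfies exactly the hypotheses \eqref{eq:SBP-scalar-potential-integral-2D-conditions} of Lemma~\ref{lem:SBP-scalar-potential-integral-2D-2}. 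The lemma then places $\vec{u}_\parallel \in \image \grad$, yielding the orthogonal direct sum
\begin{equation*}
  \kernel \curl
  =
  \image \grad \oplus
  \spann \set{ \begin{pmatrix} \osc_1 \\ \vec{0} \end{pmatrix}, \begin{pmatrix} \vec{0} \\ \osc_2 \end{pmatrix} },
\end{equation*}
from which a dimension count gives the corollary.

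There is no real obstacle here; the heavy lifting was done in Lemma~\ref{lem:SBP-scalar-potential-integral-2D-1} and Lemma~\ref{lem:SBP-scalar-potential-integral-2D-2}. The only point requiring a little care is the bookkeeping that the two oscillation vectors are at once contained in $\kernel \curl$, orthogonal to $\image \grad$, and linearly independent, so that they do contribute a clean two-dimensional complement rather than being partially absorbed into $\image \grad$ or into each other.
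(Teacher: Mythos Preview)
Your proposal is correct and follows essentially the same route the paper intends: the corollary is stated without explicit proof because, once Lemma~\ref{lem:SBP-scalar-potential-integral-2D-2} is in hand, it remains only to check that the two oscillation vectors lie in $\kernel\curl\cap(\image\grad)^\perp$ and are independent, which is exactly what you spell out (and which was already verified in the proof of Theorem~\ref{thm:SBP-scalar-potential-2D}). One small quibble: $D_y\vec{1}=\vec{0}$ is plain consistency of the derivative operator, not nullspace consistency; the latter is only needed so that $\osc_x$ spans a one-dimensional $\kernel D_x^*$, but this does not affect the validity of your argument.
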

\begin{proof}[Proof of Lemma~\ref{lem:SBP-scalar-potential-integral-2D-2}]
  By Lemma~\ref{lem:SBP-scalar-potential-integral-2D-1}, $\vec{u}_i \in \image D_i$
  and $D_i^{-1} \vec{u}_i$ is well-defined for $i \in \set{1, 2}$. Define
  \begin{equation}
    \vec{\phi} = (D_1^{-1} \vec{u}_1)(x_2 = 0) \otimes \vec{1} + D_2^{-1} \vec{u}_2.
  \end{equation}
  Here, $(D_1^{-1} \vec{u}_1)(x_2 = 0)$ denotes the value of $D_1^{-1} \vec{u}_1$ in the
  $x_2 = 0$ hyperplane.
  Then,
  \begin{equation}
    D_2 \vec{\phi}
    =
    (\I_x \otimes D_y) \vec{\phi}
    =
    \vec{0} + (\I_x \otimes D_y D_y^{-1}) \vec{u}_2
    =
    \vec{u}_2.
  \end{equation}
  Moreover, using $D_1 \vec{u}_2 = D_2 \vec{u}_1$,
  \begin{equation}
    D_1 D_2^{-1} \vec{u}_2
    =
    (D_x \otimes \I_y) (\I_x \otimes D_y^{-1}) \vec{u}_2
    =
    D_2^{-1} D_1 \vec{u}_2
    =
    D_2^{-1} D_2 \vec{u}_1.
  \end{equation}
  Since $D_2^{-1}$ is the inverse of $D_2$ for fields with zero initial values
  at $x_2 = 0$,
  \begin{equation}
    D_1 \vec{\phi}
    =
    (D_x \otimes \I_y) \bigl( D_x^{-1} \vec{u}_1(x_2 = 0) \bigr) \otimes \vec{1}
    + D_2^{-1} D_2 \vec{u}_1
    =
    \vec{u}_1(x_2 = 0) \otimes \vec{1} + D_2^{-1} D_2 \vec{u}_1
    =
    \vec{u}_1.
  \end{equation}
  Hence, $\vec{\phi}$ is a scalar potential of $\vec{u}$ and \eqref{eq:scalar-potential-integral-2D}
  is mimicked discretely.
\end{proof}

\subsection{Preliminary Results in Three Space Dimensions}
\label{sec:SBP-scalar-potential-integral-3D-preliminary}

Here, the kernels of the discrete divergence and curl operators will be studied
in three space dimensions. Since the arguments seem to be more complicated than
in the two-dimensional case because of the different structure of the curl operator,
preliminary results are obtained at first. They will be improved using the same
techniques presented in Section~\ref{sec:SBP-scalar-potential-integral-2D} afterwards.

\begin{lemma}
\label{lem:SBP-scalar-potential-3D}
  Suppose that nullspace consistent tensor product SBP operators are applied
  in three space dimensions.
  Then,
  \begin{equation}
    \dim \image \grad = N_1 N_2 N_3 - 1 < N_1 N_2 N_3 + 2 \leq \dim \kernel \curl
  \end{equation}
  and the kernel of the discrete curl operator is a superspace of the direct
  orthogonal sum
  \begin{equation}
  \label{eq:SBP-kernel-curl-3D-preliminary}
    \kernel \curl
    \supseteq
    \image \grad \oplus
    \spann \set{
      \begin{pmatrix} \osc_1 \\ \vec{0} \\ \vec{0} \end{pmatrix},
      \begin{pmatrix} \vec{0} \\ \osc_2 \\ \vec{0} \end{pmatrix},
      \begin{pmatrix} \vec{0} \\ \vec{0} \\\osc_3 \end{pmatrix}
    }.
  \end{equation}
\end{lemma}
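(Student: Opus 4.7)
My plan is to carry out a direct dimension count together with an explicit verification that the three asserted grid-oscillation vector fields are both in $\kernel \curl$ and orthogonal to $\image \grad$, mirroring the 2D proof of Theorem~\ref{thm:SBP-scalar-potential-2D} but without attempting to pin down the full orthogonal complement (which is precisely why this lemma is labelled preliminary).

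First I would compute $\dim \image \grad$. By nullspace consistency in each coordinate direction, the only discrete fields annihilated simultaneously by $D_1, D_2, D_3$ are constants, i.e.\ $\kernel \grad = \spann\set{\vec{1}}$, so the rank–nullity theorem gives $\dim \image \grad = N_1 N_2 N_3 - 1$. Next I would use Remark~\ref{rem:discrete-derivatives-commute} (the discrete derivatives commute, hence $\curl\grad = 0$) to obtain the inclusion $\image \grad \subseteq \kernel \curl$.

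Then I would verify that each of the three vector fields $(\osc_1, \vec{0}, \vec{0})^T$, $(\vec{0}, \osc_2, \vec{0})^T$, $(\vec{0}, \vec{0}, \osc_3)^T$ lies in $\kernel \curl$. Writing out $\curl$ componentwise, this reduces to showing $D_j \osc_i = \vec{0}$ for $j \neq i$. But by the tensor-product structure, $D_j$ acts as $D_s$ (with $s \in \set{x,y,z}$) on the $j$-th factor and as the identity on the others, while $\osc_i$ has $\vec{1}$ in the $j$-th factor; since $\vec{1} \in \kernel D_s$ (consistency), the claim follows immediately.

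To establish that the three osc vectors are orthogonal to $\image \grad$, I would use $\image \grad = (\kernel \grad^*)^\perp$ exactly as in the 2D proof, with $\grad^* = (D_1^*, D_2^*, D_3^*)$. Applying $\grad^*$ to, e.g., $(\osc_1, \vec{0}, \vec{0})^T$ gives $D_1^* \osc_1$, and the tensor-product identity $D_1^* = D_x^* \otimes \I_y \otimes \I_z$ together with $\osc_x \in \kernel D_x^*$ yields $D_1^* \osc_1 = \vec{0}$; the other two cases are symmetric. Finally, the three osc vectors are pairwise orthogonal since their supports lie in disjoint component slots, and each is nonzero, so the three-dimensional span they generate combines with $\image \grad$ into a direct orthogonal sum of total dimension $N_1 N_2 N_3 + 2$, giving $\dim \kernel \curl \geq N_1 N_2 N_3 + 2$. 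The only real obstacle is psychological: unlike in 2D, this construction does not exhaust $\kernel \curl$, because additional oscillatory modes such as $\osc_{12}, \osc_{13}, \osc_{23}, \osc_{123}$ can contribute — this gap is what the following subsection is set up to address.
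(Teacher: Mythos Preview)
Your proof is correct and follows essentially the same approach as the paper: a rank--nullity count for $\dim\image\grad$, the inclusion $\image\grad\subseteq\kernel\curl$ via commuting discrete derivatives, and the identification $\image\grad=(\kernel\grad^*)^\perp$ to place the three oscillation vectors in the orthogonal complement. You spell out the verifications $D_j\osc_i=\vec{0}$ for $j\neq i$ and $D_i^*\osc_i=\vec{0}$ more explicitly than the paper does, but the logical skeleton is identical.
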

\begin{proof}
  For a nullspace consistent operator, $\kernel \grad = \spann \set{\vec{1}}$ and
  \begin{equation}
    \dim \image \grad = N_1 N_2 N_3 - \dim \kernel \grad = N_1 N_2 N_3 - 1.
  \end{equation}

  Since tensor product derivative operators commute, $\image \grad \subseteq \kernel \curl$.
  Additionally,
  \begin{equation}
    \image \grad
    =
    \image \begin{pmatrix} D_1 \\ D_2 \\ D_3 \end{pmatrix}
    =
    \left( \kernel \begin{pmatrix} D_1^*, D_2^*, D_3^* \end{pmatrix} \right)^\perp
    =
    (\kernel \grad^*)^\perp
  \end{equation}
  and the span in \eqref{eq:SBP-kernel-curl-3D-preliminary} is contained in both
  $\kernel \grad^*$ and $\kernel \curl$.
\end{proof}

\begin{lemma}
\label{lem:SBP-vector-potential-3D}
  Suppose that nullspace consistent tensor product SBP operators are applied
  in three space dimensions.
  Then,
  \begin{equation}
    \dim \image \curl \leq 2 N_1 N_2 N_3 - 2 < 2 N_1 N_2 N_3 + 1 = \dim \kernel \div
  \end{equation}
  and the kernel of the discrete divergence operator is a superspace of the direct
  orthogonal sum
  \begin{equation}
  \label{eq:SBP-kernel-div-3D-preliminary}
    \kernel \div
    \supseteq
    \image \curl \oplus
    \spann \set{
      \begin{pmatrix} \osc_{23} \\ \vec{0} \\ \vec{0} \end{pmatrix},
      \begin{pmatrix} \vec{0} \\ \osc_{13} \\ \vec{0} \end{pmatrix},
      \begin{pmatrix} \vec{0} \\ \vec{0} \\\osc_{12} \end{pmatrix}
    }.
  \end{equation}
\end{lemma}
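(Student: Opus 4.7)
The plan is to mirror the arguments used for Lemma~\ref{lem:SBP-scalar-potential-3D} and Theorem~\ref{thm:SBP-vector-potential-2D}, combining a dimension count via adjoints with an explicit verification that the spanning vectors and the image of $\curl$ sit in $\kernel \div$ and are mutually orthogonal.

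First, I would pin down $\dim \kernel \div$. Using $\image \div = (\kernel \div^*)^\perp$ and $\div^* = (D_1^*, D_2^*, D_3^*)^T$, I get $\kernel \div^* = \kernel D_1^* \cap \kernel D_2^* \cap \kernel D_3^*$. For tensor product operators, $\kernel D_1^* = \spann\{\osc_x\} \otimes \R^{N_2} \otimes \R^{N_3}$ and analogously for $D_2^*, D_3^*$, so their intersection is the one-dimensional span of $\osc_{123}$. Hence $\dim \image \div = N_1 N_2 N_3 - 1$ and $\dim \kernel \div = 3 N_1 N_2 N_3 - (N_1 N_2 N_3 - 1) = 2 N_1 N_2 N_3 + 1$. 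For the image bound, I combine $\dim \image \curl = 3 N_1 N_2 N_3 - \dim \kernel \curl$ with the inequality $\dim \kernel \curl \geq N_1 N_2 N_3 + 2$ from Lemma~\ref{lem:SBP-scalar-potential-3D} to obtain $\dim \image \curl \leq 2 N_1 N_2 N_3 - 2$.

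Next, I would establish the two inclusions into $\kernel \div$. The inclusion $\image \curl \subseteq \kernel \div$ is just $\div \curl = 0$, which holds by Remark~\ref{rem:discrete-derivatives-commute}. For the explicit oscillations, I would verify, for example,
\begin{equation}
  \div \begin{pmatrix} \osc_{23} \\ \vec{0} \\ \vec{0} \end{pmatrix}
  =
  D_1 \osc_{23}
  =
  (D_x \otimes \I_y \otimes \I_z)(\vec{1} \otimes \osc_y \otimes \osc_z)
  =
  (D_x \vec{1}) \otimes \osc_y \otimes \osc_z
  =
  \vec{0}
\end{equation}
using $D_x \vec{1} = 0$ from consistency, and analogously for the other two generators.

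Then I need the orthogonality that makes the sum direct. Equivalently, I would show the three oscillation vectors lie in $(\image \curl)^\perp = \kernel \curl^*$. Writing out $\curl^*$ entrywise (each $D_i$ replaced by $D_i^*$ and transposed as a block matrix), the image of $(\osc_{23}, \vec{0}, \vec{0})^T$ under $\curl^*$ reduces to terms like $D_2^* \osc_{23} = \vec{1} \otimes D_y^* \osc_y \otimes \osc_z = \vec{0}$ and $D_3^* \osc_{23} = \vec{1} \otimes \osc_y \otimes D_z^* \osc_z = \vec{0}$, since $\osc_y \in \kernel D_y^*$ and $\osc_z \in \kernel D_z^*$. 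The other two generators are treated analogously. Linear independence of the three generators is immediate from their disjoint block structure, so together with orthogonality to $\image \curl$ the sum in \eqref{eq:SBP-kernel-div-3D-preliminary} is a direct orthogonal sum contained in $\kernel \div$.

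I do not expect a genuine obstacle here; the argument is essentially a bookkeeping exercise in tensor products and adjoints, already warmed up by Theorem~\ref{thm:SBP-vector-potential-2D} and Lemma~\ref{lem:SBP-scalar-potential-3D}. The only mildly delicate point is keeping the signs and block positions in $\curl^*$ straight, and remembering that the relevant $\osc_{jk}$ must have \emph{no} $\vec{1}$ factor in the coordinate directions of the two $D_i^*$ that act on it — which is exactly why $\osc_{23}$ (not $\osc_{12}$ or $\osc_{13}$) sits in the first component, and cyclically.
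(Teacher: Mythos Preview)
Your proposal is correct and follows essentially the same approach as the paper: compute $\dim\kernel\div$ via the one-dimensional kernel of $\div^*$, use $\div\curl=0$ for $\image\curl\subseteq\kernel\div$, and verify that the three oscillation vectors lie in $\kernel\div\cap\kernel\curl^*=(\image\curl)^\perp\cap\kernel\div$. The only cosmetic difference is that you derive $\dim\image\curl\le 2N_1N_2N_3-2$ from Lemma~\ref{lem:SBP-scalar-potential-3D} via rank--nullity, whereas the paper leaves this implicit (it follows immediately from the orthogonal inclusion \eqref{eq:SBP-kernel-div-3D-preliminary} itself, since the three-dimensional span is orthogonal to $\image\curl$ inside $\kernel\div$).
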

\begin{proof}
  For a nullspace consistent operator,
  \begin{multline}
    \dim \kernel \div
    =
    \dim \left( \image \begin{pmatrix} D_1^* \\ D_2^* \\ D_3^* \end{pmatrix} \right)^\perp
    =
    3 N_1 N_2 N_3 - \dim \image \begin{pmatrix} D_1^* \\ D_2^* \\ D_3^* \end{pmatrix}
    \\
    =
    2 N_1 N_2 N_3 + \dim \kernel \begin{pmatrix} D_1^* \\ D_2^* \\ D_3^* \end{pmatrix}
    =
    2 N_1 N_2 N_3 + 1.
  \end{multline}

  Since tensor product derivative operators commute, $\image \curl \subseteq \kernel \div$.
  Additionally, $\image \curl =  (\kernel \curl^*)^\perp$ and the span in
  \eqref{eq:SBP-kernel-div-3D-preliminary} is contained in both $\kernel \curl^*$
  and $\kernel \div$.
\end{proof}

\subsection{Scalar Potentials via Integrals in Three Space Dimensions}
\label{sec:SBP-scalar-potential-integral-3D}

The methods used in Section~\ref{sec:SBP-scalar-potential-integral-2D} to get
scalar potential for discretely curl free vector fields can also be applied in
three space dimensions. They can even be used to extend the preliminary results
of the previous Section~\ref{sec:SBP-scalar-potential-integral-3D-preliminary}.

Consider a box $[0, x_{1,\mathrm{max}}] \times [0, x_{2,\mathrm{max}}] \times
[0, x_{3,\mathrm{max}}]$. In the continuous setting, the analogue of
\eqref{eq:scalar-potential-integral-2D} is
\begin{equation}
\label{eq:scalar-potential-integral-3D}
  \phi(x)
  =
  \int_0^{x_1} u_1(\xi, 0, 0) \dif \xi
  + \int_0^{x_2} u_2(x_1, \eta, 0) \dif \eta
  + \int_0^{x_3} u_3(x_1, x_2, \zeta) \dif \zeta.
\end{equation}

As in the two-dimensional case, $\vec{u}_i \in \image D_i$ is necessary to mimic
\eqref{eq:scalar-potential-integral-3D} discretely. Here, the necessary conditions
are
\begin{equation}
\label{eq:SBP-scalar-potential-integral-3D-conditions}
  \forall i,j \in \set{1, 2, 3}\colon
  \qquad
  D_i \vec{u}_j = D_j \vec{u}_i,
  \qquad
  \vec{u}_i \perp \osc_i.
\end{equation}
\begin{lemma}
\label{lem:SBP-scalar-potential-integral-3D-1}
  Suppose that nullspace consistent tensor product SBP operators are applied
  in three space dimensions.
  If the discrete vector field $\vec{u}$ satisfies
  \eqref{eq:SBP-scalar-potential-integral-3D-conditions},
  $\vec{u}_i \in \image D_i$, $i \in \set{1, 2, 3}$.
\end{lemma}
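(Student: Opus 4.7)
The plan is to imitate the proof of Lemma~\ref{lem:SBP-scalar-potential-integral-2D-1}, but now working against two transverse derivative directions rather than one. By the symmetry of the hypotheses \eqref{eq:SBP-scalar-potential-integral-3D-conditions} under permutation of the indices, it suffices to prove $\vec{u}_3 \in \image D_3$; the cases $i \in \set{1,2}$ follow identically.

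First I would decompose $\vec{u}_3 = D_3 \vec{v}_3 + \vec{w}_3$, where $\vec{w}_3 \in (\image D_3)^\perp = \kernel D_3^*$. The goal is to show $\vec{w}_3 = \vec{0}$. Next I would repeat the inner-product calculation from the 2D proof, once for each of the two remaining coordinate directions. Using the curl-free condition $D_j \vec{u}_3 = D_3 \vec{u}_j$ for $j \in \set{1,2}$, and the fact that tensor-product derivative operators commute so that $D_j^* D_j D_3 = D_3 D_j^* D_j$ and $D_j^* D_3 = D_3 D_j^*$, one obtains
\begin{equation}
  \norm{D_j \vec{w}_3}^2
  = \scp{D_j \vec{w}_3}{D_j \vec{u}_3 - D_j D_3 \vec{v}_3}
  = \scp{\vec{w}_3}{D_3 \bigl( D_j^* \vec{u}_j - D_j^* D_j \vec{v}_3 \bigr)}
  = 0,
\end{equation}
since $\vec{w}_3 \in \kernel D_3^*$. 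Hence $\vec{w}_3 \in \kernel D_1 \cap \kernel D_2 \cap \kernel D_3^*$.

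The step I expect to be the main (though mild) obstacle is the identification of this triple intersection. Here the tensor-product structure is essential: nullspace consistency gives $\kernel D_1 = \spann\set{\vec{1}} \otimes \R^{N_2} \otimes \R^{N_3}$ and $\kernel D_2 = \R^{N_1} \otimes \spann\set{\vec{1}} \otimes \R^{N_3}$, while by definition $\kernel D_3^* = \R^{N_1} \otimes \R^{N_2} \otimes \spann\set{\osc_z}$. Intersecting the three factor by factor yields $\spann\set{\vec{1} \otimes \vec{1} \otimes \osc_z} = \spann\set{\osc_3}$, so $\vec{w}_3 \in \spann\set{\osc_3}$.

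Finally I would use the orthogonality assumption $\vec{u}_3 \perp \osc_3$ together with $\scp{D_3 \vec{v}_3}{\osc_3} = \scp{\vec{v}_3}{D_3^* \osc_3} = 0$ to conclude
\begin{equation}
  0 = \scp{\vec{u}_3}{\osc_3} = \scp{D_3 \vec{v}_3 + \vec{w}_3}{\osc_3} = \scp{\vec{w}_3}{\osc_3},
\end{equation}
which forces $\vec{w}_3 = \vec{0}$ and therefore $\vec{u}_3 = D_3 \vec{v}_3 \in \image D_3$. Repeating the argument with indices permuted finishes all three cases.
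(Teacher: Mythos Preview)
Your proof is correct and follows essentially the same approach as the paper's: decompose $\vec{u}_i$ into $D_i \vec{v}_i + \vec{w}_i$, use the curl-free relations with the two transverse directions to force $\vec{w}_i$ into the kernels of both transverse $D_j$, intersect with $\kernel D_i^*$ via the tensor-product structure to get $\vec{w}_i \in \spann\set{\osc_i}$, and conclude using $\vec{u}_i \perp \osc_i$. The only difference is cosmetic (you treat $i=3$, the paper treats $i=1$).
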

\begin{proof}
  Consider $i = 1$ for simplicity. The other cases can be handled similarly.

  There are $\vec{v}_1, \vec{w}_1$ such that $\vec{u}_1 = D_1 \vec{v}_1 + \vec{w}_1$, where
  $\vec{w}_1 \in (\image D_1)^\perp = \kernel D_1^*$.
  To show that $\vec{w}_1 = \vec{0}$, use $D_1 \vec{u}_j = D_j \vec{u}_1 = D_j D_1 \vec{v}_1 + D_j \vec{w}_1$ for
  $j \in \set{2, 3}$ and calculate (without summing over $j$)
  \begin{multline}
    \norm{D_j \vec{w}_1}^2
    =
    \scp{D_j \vec{w}_1}{D_1 \vec{u}_j - D_j D_1 \vec{v}_1}
    =
    \scp{\vec{w}_1}{D_j^* D_1 \vec{u}_j - D_j^* D_j D_1 \vec{v}_1}
    \\
    =
    \scp{\vec{w}_1}{D_1 \bigl( D_j^* \vec{u}_j - D_j^* D_j \vec{v}_1 \bigr)}
    =
    0.
  \end{multline}
  Therefore, $\vec{w}_1 \in \kernel D_2 \cap \kernel D_3 = \R^{N_1} \otimes \spann \set{\vec{1}}
  \otimes \spann \set{\vec{1}}$. However, $\vec{w}_1 \in \kernel D_1^* = \spann \set{\osc_x}
  \otimes \R^{N_2} \otimes \R^{N_3}$ by definition.
  Hence, $\vec{w}_1 \in \spann \set{\osc_x \otimes \vec{1} \otimes \vec{1}} = \spann \set{\osc_1}$.
  Finally, using $\vec{u}_1 \perp \osc_1$ yields
  \begin{equation}
    0
    =
    \scp{\vec{u}_1}{\osc_1}
    =
    \scp{D_1 \vec{v}_1 + \vec{w}_1}{\osc_1}
    =
    \scp{\vec{v}_1}{D_1^* \osc_1} + \scp{\vec{w}_1}{\osc_1}
    =
    \scp{\vec{w}_1}{\osc_1},
  \end{equation}
  since $\osc_1 \in \kernel D_1^*$. Thus, $\vec{w}_1 = \vec{0}$.
\end{proof}

Using Lemma~\ref{lem:SBP-scalar-potential-integral-3D-1} allows to prove
\begin{lemma}
\label{lem:SBP-scalar-potential-integral-3D-2}
  Suppose that nullspace consistent tensor product SBP operators are applied
  in three space dimensions.
  If the discrete vector field $\vec{u}$ satisfies
  \eqref{eq:SBP-scalar-potential-integral-3D-conditions},
  there is a discrete scalar potential $\vec{\phi}$ of $\vec{u}$.
\end{lemma}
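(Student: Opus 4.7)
The plan is to mimic the continuous formula \eqref{eq:scalar-potential-integral-3D} by explicitly constructing a discrete potential as a telescoping sum, directly generalising the two-dimensional construction of Lemma~\ref{lem:SBP-scalar-potential-integral-2D-2}. Since \eqref{eq:SBP-scalar-potential-integral-3D-conditions} holds, Lemma~\ref{lem:SBP-scalar-potential-integral-3D-1} supplies $\vec{u}_i \in \image D_i$, so each $D_i^{-1}\vec{u}_i$ is well-defined. The same reasoning, applied to the lower-dimensional SBP operators, gives $\vec{u}_1(x_2=0, x_3=0) \in \image D_x$ and $\vec{u}_2(x_3=0) \in \image (\I_x \otimes D_y)$, which is exactly what is needed to apply the corresponding inverses to the appropriate slices.

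With these preparations, I would set
\begin{equation*}
  \vec{\phi}
  =
  \bigl(D_x^{-1} \vec{u}_1(x_2=0,\, x_3=0)\bigr) \otimes \vec{1} \otimes \vec{1}
  + \bigl((\I_x \otimes D_y^{-1}) \vec{u}_2(x_3=0)\bigr) \otimes \vec{1}
  + D_3^{-1} \vec{u}_3,
\end{equation*}
mirroring the three integrals of \eqref{eq:scalar-potential-integral-3D}.

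The verification that $D_i \vec{\phi} = \vec{u}_i$ proceeds in the order $i = 3, 2, 1$. Applying $D_3$ annihilates the first two summands, which are constant in $x_3$, and leaves $D_3 D_3^{-1} \vec{u}_3 = \vec{u}_3$. For $i = 2$, the first summand is killed, and the third yields $D_2 D_3^{-1} \vec{u}_3 = D_3^{-1} D_3 \vec{u}_2 = \vec{u}_2 - \vec{u}_2(x_3=0) \otimes \vec{1}$ by commutativity of the $D_j$'s together with the compatibility $D_2\vec{u}_3 = D_3 \vec{u}_2$; this precisely cancels against the second summand. For $i = 1$, the same telescoping has to be carried out twice, first stripping off the $x_3$-dependence and then the $x_2$-dependence, each step relying on one of the equations $D_1\vec{u}_j = D_j\vec{u}_1$ for $j \in \{2, 3\}$.

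The main obstacle is the bookkeeping surrounding the restriction-and-extension operators $\vec{v} \mapsto \vec{v}(x_k=0) \otimes \vec{1}$ in the appropriate tensor factor, combined with the repeated use of the identity $D_i^{-1} D_i \vec{v} = \vec{v} - \vec{v}(x_i=0) \otimes \vec{1}$. Once the commutation of tensor-product inverses with the other discrete derivatives is written out carefully, no new ideas beyond those in the two-dimensional case are required; the threefold nesting of tensor factors is the only genuinely new ingredient.
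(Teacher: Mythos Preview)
Your proposal is correct and follows essentially the same route as the paper: define $\vec{\phi}$ as the discrete analogue of the three nested integrals in \eqref{eq:scalar-potential-integral-3D} and verify $D_i\vec{\phi}=\vec{u}_i$ for $i=3,2,1$ using the commutation of the tensor-product operators, the curl-free relations $D_i\vec{u}_j=D_j\vec{u}_i$, and the identity $D_i^{-1}D_i\vec{v}=\vec{v}-\vec{v}(x_i=0)\otimes\vec{1}$. The only cosmetic difference is that you restrict first and then apply the lower-dimensional inverse (e.g.\ $D_x^{-1}\vec{u}_1(x_2=0,x_3=0)$), whereas the paper applies the full $D_i^{-1}$ and then restricts; since $D_1^{-1}=D_x^{-1}\otimes\I_y\otimes\I_z$ commutes with restriction in the transverse factors, the two constructions coincide, and your extra observation that the restricted slices lie in the appropriate images is automatic from $\vec{u}_i\in\image D_i$.
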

\begin{corollary}
\label{cor:SBP-scalar-potential-integral-3D}
  Suppose that nullspace consistent tensor product SBP operators are applied
  in three space dimensions.
  Then, $\dim \kernel \curl = \dim \image \grad + 3$.
\end{corollary}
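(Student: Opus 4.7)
The plan is to strengthen the containment \eqref{eq:SBP-kernel-curl-3D-preliminary} of Lemma~\ref{lem:SBP-scalar-potential-3D} to an equality, using Lemma~\ref{lem:SBP-scalar-potential-integral-3D-2} to provide a scalar potential for the ``oscillation-free'' part of any element of $\kernel \curl$. Combined with $\dim \image \grad = N_1 N_2 N_3 - 1$, already noted in Lemma~\ref{lem:SBP-scalar-potential-3D}, this immediately yields the claim.

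Concretely, I would proceed as follows. Take an arbitrary $\vec{u} \in \kernel \curl$ and decompose each component orthogonally as $\vec{u}_i = \vec{\tilde u}_i + c_i \osc_i$ with $\vec{\tilde u}_i \perp \osc_i$ and $c_i \in \R$, for $i \in \set{1,2,3}$. The key structural observation, available directly from the tensor product definitions of $\osc_i$, is that $D_j \osc_i = 0$ whenever $i \neq j$, since for $i \neq j$ the factor in the $j$-th slot of $\osc_i$ is $\vec{1} \in \kernel D_y$ or $\vec{1} \in \kernel D_z$ etc. Consequently, for $i \neq j$, $D_i \vec{u}_j = D_i \vec{\tilde u}_j$ and $D_j \vec{u}_i = D_j \vec{\tilde u}_i$, so the curl-free conditions $D_i \vec{u}_j = D_j \vec{u}_i$ transfer verbatim to $\vec{\tilde u}$. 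Since also $\vec{\tilde u}_i \perp \osc_i$ by construction, $\vec{\tilde u}$ satisfies \eqref{eq:SBP-scalar-potential-integral-3D-conditions}, and Lemma~\ref{lem:SBP-scalar-potential-integral-3D-2} supplies a discrete scalar potential $\vec{\phi}$ with $\grad \vec{\phi} = \vec{\tilde u}$.

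This gives
\begin{equation*}
  \vec{u}
  =
  \grad \vec{\phi}
  + c_1 \begin{pmatrix} \osc_1 \\ \vec{0} \\ \vec{0} \end{pmatrix}
  + c_2 \begin{pmatrix} \vec{0} \\ \osc_2 \\ \vec{0} \end{pmatrix}
  + c_3 \begin{pmatrix} \vec{0} \\ \vec{0} \\ \osc_3 \end{pmatrix},
\end{equation*}
proving the reverse inclusion in \eqref{eq:SBP-kernel-curl-3D-preliminary}. The orthogonality of the sum was already established in the proof of Lemma~\ref{lem:SBP-scalar-potential-3D}, so the decomposition is in fact a direct orthogonal sum, and hence
\begin{equation*}
  \dim \kernel \curl
  = \dim \image \grad + 3.
\end{equation*}

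I do not anticipate a real obstacle: all the technical work has been absorbed into Lemma~\ref{lem:SBP-scalar-potential-integral-3D-1} and Lemma~\ref{lem:SBP-scalar-potential-integral-3D-2}. The only point requiring a small verification is that the oscillation components $c_i \osc_i$ can indeed be stripped off without destroying the curl-free property, which reduces to the elementary identity $D_j \osc_i = 0$ for $i \neq j$ noted above.
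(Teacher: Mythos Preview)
Your proposal is correct and is exactly the argument the paper leaves implicit: the corollary is stated without proof, to be read as an immediate consequence of Lemma~\ref{lem:SBP-scalar-potential-3D} and Lemma~\ref{lem:SBP-scalar-potential-integral-3D-2}. Your write-up simply spells out the one step the paper omits, namely that subtracting the $\osc_i$-components from $\vec{u} \in \kernel\curl$ preserves the curl-free condition (via $D_j\osc_i = 0$ for $i\neq j$) so that Lemma~\ref{lem:SBP-scalar-potential-integral-3D-2} applies to the remainder.
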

\begin{proof}[Proof of Lemma~\ref{lem:SBP-scalar-potential-integral-3D-2}]
  By Lemma~\ref{lem:SBP-scalar-potential-integral-2D-1}, $\vec{u}_i \in \image D_i$
  and $D_i^{-1} \vec{u}_i$ is well-defined for $i \in \set{1, 2, 3}$. Define
  \begin{equation}
    \vec{\phi}
    =
    (D_1^{-1} \vec{u}_1)(x_2 = x_3 = 0) \otimes \vec{1} \otimes \vec{1}
    + (D_2^{-1} \vec{u}_2)(x_3 = 0) \otimes \vec{1}
    + D_3^{-1} \vec{u}_3.
  \end{equation}
  Here, $(D_1^{-1} \vec{u}_1)(x_2 = x_3 = 0)$ denotes the value of
  $D_1^{-1} \vec{u}_1$ on the $x_2 = x_3 = 0$ curve and
  $(D_2^{-1} \vec{u}_2)(x_3 = 0)$ is a value in the $x_3 = 0$ plane.
  Then,
  \begin{equation}
    D_3 \vec{\phi}
    =
    (\I_x \otimes \I_y \otimes D_z) \vec{\phi}
    =
    \vec{0} + \vec{0} + (\I_x \otimes \I_y \otimes D_z D_z^{-1}) \vec{u}_3
    =
    \vec{u}_3.
  \end{equation}
  Using $D_2 \vec{u}_3 = D_3 \vec{u}_2$ yields
  \begin{equation}
    D_2 D_3^{-1} \vec{u}_3
    =
    D_3^{-1} D_2 \vec{u}_3
    =
    D_3^{-1} D_3 \vec{u}_2.
  \end{equation}
  Since $D_3^{-1}$ is the inverse of $D_3$ for fields with zero initial values
  at $x_3 = 0$,
  \begin{multline}
    D_2 \vec{\phi}
    =
    \vec{0}
    + (\I_x \otimes D_y \otimes \I_z) \bigl( (\I_x \otimes D_y^{-1}) \vec{u}_2(x_3 = 0) \bigr) \otimes \vec{1}
    + D_3^{-1} D_3 \vec{u}_2
    \\
    =
    \vec{u}_2(x_3 = 0) \otimes \vec{1} + D_3^{-1} D_3 \vec{u}_2
    =
    \vec{u}_2.
  \end{multline}
  Similarly, inserting $D_1 \vec{u}_2 = D_2 \vec{u}_1$ results in
  \begin{equation}
    D_1 D_2^{-1} \vec{u}_2
    =
    D_2^{-1} D_1 \vec{u}_2
    =
    D_2^{-1} D_2 \vec{u}_1
  \end{equation}
  and $D_1 \vec{u}_3 = D_3 \vec{u}_1$ yields
  \begin{equation}
    D_1 D_3^{-1} \vec{u}_3
    =
    D_3^{-1} D_1 \vec{u}_3
    =
    D_3^{-1} D_3 \vec{u}_1.
  \end{equation}
  Using again that $D_i^{-1}$, $i \in \set{2,3}$, is the inverse of $D_i$ for
  fields with zero initial values at $x_i = 0$,
  \begin{multline}
    D_1 \vec{\phi}
    =
    (D_x \otimes \I_y \otimes \I_z) \bigl( D_x^{-1} \vec{u}_1(x_2 = x_3 = 0) \bigr) \otimes \vec{1} \otimes \vec{1}
    + (D_2^{-1} D_2 \vec{u}_1) (x_3 = 0) \otimes \vec{1}
    + D_3^{-1} D_3 \vec{u}_1
    \\
    =
      \vec{u}_1(x_2 = x_3 = 0) \otimes \vec{1} \otimes \vec{1}
    + \vec{u}_1(x_3 = 0) \otimes \vec{1} - \vec{u}_1(x_2 = x_3 = 0) \otimes \vec{1} \otimes \vec{1}
    + D_3^{-1} D_3 \vec{u}_1
    =
    \vec{u}_1.
  \end{multline}
  Hence, $\vec{\phi}$ is a scalar potential of $\vec{u}$ and
  \eqref{eq:scalar-potential-integral-3D} is mimicked discretely.
\end{proof}

\begin{remark}
  Similarly to the construction of the scalar potential $\phi$
  \eqref{eq:scalar-potential-integral-3D}, a vector potential $v = (v_1, v_2, 0)$
  of a (sufficiently smooth) divergence free vector field $u$ can be constructed
  via
  \begin{equation}
    v_1(x)
    =
    \int_0^{x_3} u_2(x_1, x_2, \zeta) \dif \zeta
    - \int_0^{x_2} u_3(x_1, \eta, 0) \dif \eta,
    \qquad
    v_2(x)
    =
    -\int_0^{x_3} u_1(x_1, x_2, \zeta) \dif \zeta.
  \end{equation}
  Discrete versions can probably be obtained along the same lines.
\end{remark}

\subsection{Three Space Dimensions Revisited}
\label{sec:SBP-potentials-3D}

Using the results of the previous Section~\ref{sec:SBP-scalar-potential-integral-3D},
the following analogues in three space dimensions of
Theorems~\ref{thm:SBP-scalar-potential-2D} and \ref{thm:SBP-vector-potential-2D}
can be obtained.
In particular, the inequalities in Lemmas~\ref{lem:SBP-scalar-potential-3D}
and \ref{lem:SBP-vector-potential-3D} become equalities and scalar/vector potentials
of discretely curl/divergence free vector fields exist if and only if no grid
oscillations are present.

\begin{theorem}
\label{thm:SBP-scalar-potential-3D}
  Suppose that nullspace consistent tensor product SBP operators are applied
  in three space dimensions.
  Then,
  \begin{equation}
    \dim \image \grad = N_1 N_2 N_3 - 1 < N_1 N_2 N_3 + 2 = \dim \kernel \curl
  \end{equation}
  and the kernel of the discrete curl operator can be decomposed into the direct
  orthogonal sum
  \begin{equation}
  \label{eq:SBP-kernel-curl-3D}
    \kernel \curl
    =
    \image \grad \oplus
    \spann \set{
      \begin{pmatrix} \osc_1 \\ \vec{0} \\ \vec{0} \end{pmatrix},
      \begin{pmatrix} \vec{0} \\ \osc_2 \\ \vec{0} \end{pmatrix},
      \begin{pmatrix} \vec{0} \\ \vec{0} \\\osc_3 \end{pmatrix}
    }.
  \end{equation}
\end{theorem}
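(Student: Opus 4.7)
The plan is to combine the orthogonal inclusion from Lemma~\ref{lem:SBP-scalar-potential-3D} with the exact dimension count from Corollary~\ref{cor:SBP-scalar-potential-integral-3D}. No new machinery is needed; once both of these preparatory results are in hand, the theorem reduces to a dimension bookkeeping argument.

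First, I would invoke Lemma~\ref{lem:SBP-scalar-potential-3D}, which already shows that the right-hand side of \eqref{eq:SBP-kernel-curl-3D} is an orthogonal direct sum sitting inside $\kernel \curl$: the three oscillatory vectors lie in $\kernel \grad^* = (\image \grad)^\perp$ and also annihilate $\curl$, and they are clearly linearly independent. Hence the right-hand side has dimension $(N_1 N_2 N_3 - 1) + 3 = N_1 N_2 N_3 + 2$.

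Next, I would invoke Corollary~\ref{cor:SBP-scalar-potential-integral-3D}, which asserts the matching identity $\dim \kernel \curl = \dim \image \grad + 3 = N_1 N_2 N_3 + 2$. Its core content is Lemma~\ref{lem:SBP-scalar-potential-integral-3D-2}: the explicit discrete analogue of the iterated integral \eqref{eq:scalar-potential-integral-3D} produces a scalar potential for any discretely curl-free vector field that is orthogonal to the three $\osc_i$-directions. Since the right-hand side of \eqref{eq:SBP-kernel-curl-3D} is a subspace of $\kernel \curl$ of equal finite dimension, the two must coincide, and the strict inequality $\dim \image \grad < \dim \kernel \curl$ is immediate from $3 > 0$.

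The hard part is not in this theorem itself but in the preparation: Lemma~\ref{lem:SBP-scalar-potential-integral-3D-1} must pin down that a discretely curl-free vector field orthogonal to the $\osc_i$-directions lies componentwise in $\image D_i$, and the iterated-integral construction in Lemma~\ref{lem:SBP-scalar-potential-integral-3D-2} must be mimicked without introducing any residual $\osc$-contribution. Once those are available, the present theorem follows essentially for free.
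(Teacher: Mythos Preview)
Your proposal is correct and matches the paper's own proof exactly: the paper's argument is the single line ``Apply Lemma~\ref{lem:SBP-scalar-potential-3D} and Corollary~\ref{cor:SBP-scalar-potential-integral-3D},'' which is precisely the combination of the orthogonal inclusion and the dimension count that you spell out. Your additional commentary on where the real work lies (in Lemmas~\ref{lem:SBP-scalar-potential-integral-3D-1} and~\ref{lem:SBP-scalar-potential-integral-3D-2}) is accurate and consistent with the paper's structure.
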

\begin{proof}
  Apply Lemma~\ref{lem:SBP-scalar-potential-3D} and
  Corollary~\ref{cor:SBP-scalar-potential-integral-3D}.
\end{proof}

\begin{theorem}
\label{thm:SBP-vector-potential-3D}
  Suppose that nullspace consistent tensor product SBP operators are applied
  in three space dimensions.
  Then,
  \begin{equation}
    \dim \image \curl = 2 N_1 N_2 N_3 - 2 < 2 N_1 N_2 N_3 + 1 = \dim \kernel \div
  \end{equation}
  and the kernel of the discrete divergence operator can be decomposed into the direct
  orthogonal sum
  \begin{equation}
  \label{eq:SBP-kernel-div-3D}
    \kernel \div
    =
    \image \curl \oplus
    \spann \set{
      \begin{pmatrix} \osc_{23} \\ \vec{0} \\ \vec{0} \end{pmatrix},
      \begin{pmatrix} \vec{0} \\ \osc_{13} \\ \vec{0} \end{pmatrix},
      \begin{pmatrix} \vec{0} \\ \vec{0} \\\osc_{12} \end{pmatrix}
    }.
  \end{equation}
\end{theorem}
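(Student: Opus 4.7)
The plan is to mirror the strategy used for Theorem~\ref{thm:SBP-scalar-potential-3D}: Lemma~\ref{lem:SBP-vector-potential-3D} already provides the inclusion $\kernel \div \supseteq \image \curl \oplus \spann\{\dots\}$ together with the identity $\dim \kernel \div = 2 N_1 N_2 N_3 + 1$, so it suffices to pin down $\dim \image \curl$ precisely and then observe that the two sides of the claimed decomposition have matching dimension.

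First I would compute $\dim \image \curl$ by rank-nullity applied to the linear map $\curl\colon \R^{3 N_1 N_2 N_3} \to \R^{3 N_1 N_2 N_3}$. Theorem~\ref{thm:SBP-scalar-potential-3D} (which was already upgraded from the preliminary Lemma~\ref{lem:SBP-scalar-potential-3D} via the scalar-potential construction of Section~\ref{sec:SBP-scalar-potential-integral-3D}) gives $\dim \kernel \curl = N_1 N_2 N_3 + 2$, so that
\begin{equation*}
  \dim \image \curl = 3 N_1 N_2 N_3 - \dim \kernel \curl = 2 N_1 N_2 N_3 - 2,
\end{equation*}
which is precisely the strict inequality asserted in the theorem.

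Next I would argue that the direct orthogonal sum on the right of \eqref{eq:SBP-kernel-div-3D} has the right dimension. The three vectors $(\osc_{23},\vec{0},\vec{0})$, $(\vec{0},\osc_{13},\vec{0})$, $(\vec{0},\vec{0},\osc_{12})$ are supported in disjoint components and hence linearly independent; moreover, as already used in the proof of Lemma~\ref{lem:SBP-vector-potential-3D}, they lie in $\kernel \curl^* = (\image \curl)^\perp$, so their span is orthogonal to $\image \curl$ and the sum is truly direct. Consequently
\begin{equation*}
  \dim \Bigl( \image \curl \oplus \spann\{\dots\} \Bigr)
  = (2 N_1 N_2 N_3 - 2) + 3
  = 2 N_1 N_2 N_3 + 1
  = \dim \kernel \div.
\end{equation*}

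Finally, since Lemma~\ref{lem:SBP-vector-potential-3D} already shows that this direct sum is contained in $\kernel \div$ and the two subspaces now have the same dimension, equality \eqref{eq:SBP-kernel-div-3D} follows. There is no real obstacle here: the whole proof is a dimension count, with all of the structural content (the inclusion, the orthogonality, and the nontrivial equality $\dim \kernel \curl = N_1 N_2 N_3 + 2$) already established in Lemmas~\ref{lem:SBP-scalar-potential-3D}--\ref{lem:SBP-vector-potential-3D}, Corollary~\ref{cor:SBP-scalar-potential-integral-3D}, and Theorem~\ref{thm:SBP-scalar-potential-3D}.
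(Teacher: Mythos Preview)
Your proposal is correct and follows essentially the same route as the paper: invoke Lemma~\ref{lem:SBP-vector-potential-3D} for the inclusion and $\dim\kernel\div$, obtain $\dim\kernel\curl = N_1 N_2 N_3 + 2$ from Corollary~\ref{cor:SBP-scalar-potential-integral-3D} (equivalently Theorem~\ref{thm:SBP-scalar-potential-3D}), use rank--nullity to get $\dim\image\curl = 2 N_1 N_2 N_3 - 2$, and finish by dimension counting. The paper's proof is more terse but structurally identical.
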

\begin{proof}
  Apply Lemma~\ref{lem:SBP-vector-potential-3D} and
  Corollary~\ref{cor:SBP-scalar-potential-integral-3D}, using that
  $\dim \image \curl + \dim \kernel \curl = 3  N_1 N_2 N_3$.
\end{proof}

\subsection{Remarks on Numerical Implementations}
\label{sec:SBP-potentials-implementation}

Theorems~\ref{thm:SBP-scalar-potential-2D}, \ref{thm:SBP-vector-potential-2D},
\ref{thm:SBP-scalar-potential-3D}, and \ref{thm:SBP-vector-potential-3D}
show that $\kernel \curl = \image \grad$ and $\kernel \div = \image \curl$ (or
$\kernel \div = \image \rot$ in two space dimensions) do not
hold discretely. However, these relations become true when the kernels are restricted
to the subspace of grid functions orthogonal to grid oscillations in either
coordinate direction.

Hence, if potentials of curl/divergence free vector fields are sought, one has
to remove these grid oscillations, e.g. by an orthogonal projection. Such a
projection can be interpreted as a discrete filtering process, reducing the discrete
norm induced by the mass matrix. For example, the operator filtering out all
grid oscillations $\osc_i$, $i \in \set{1,\dots,d}$, is given by
\begin{equation}
\label{eq:filter-operator}
  F := \I - \sum_{i=1}^d \frac{\osc_i \osc_i^T M}{\norm{\osc_i}_M^2}.
\end{equation}
\begin{theorem}
  The filter operator $F$ \eqref{eq:filter-operator} is an orthogonal projection
  with respect to the scalar product induced by $M$
  and satisfies $\norm{F} \leq 1$, where the operator norm is induced by the
  discrete norm $\norm{\cdot}_M$.
\end{theorem}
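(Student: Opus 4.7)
The plan is to verify that $F$ is an $M$-orthogonal projection, from which the norm bound $\norm{F} \leq 1$ follows automatically. Writing $F = \I - \sum_{i=1}^d P_i$ with $P_i := \osc_i \osc_i^T M / \norm{\osc_i}_M^2$, I would first observe that each $P_i$ is the $M$-orthogonal projection onto $\spann\set{\osc_i}$: the matrix $M P_i = M \osc_i \osc_i^T M / \norm{\osc_i}_M^2$ is symmetric, so $M P_i = P_i^T M$, and a direct computation using $\osc_i^T M \osc_i = \norm{\osc_i}_M^2$ gives $P_i^2 = P_i$.

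The central step is to establish the mutual $M$-orthogonality
\begin{equation*}
  \scp{\osc_i}{\osc_j}_M = 0 \qquad \text{for } i \neq j,
\end{equation*}
which is equivalent to $P_i P_j = 0$. Using the tensor-product structure of $M$ and of the $\osc_i$, one obtains in two dimensions $\scp{\osc_1}{\osc_2}_M = (\osc_x^T M_x \vec{1})(\vec{1}^T M_y \osc_y)$, and in three dimensions an analogous expression with an additional positive factor such as $\vec{1}^T M_s \vec{1}$. It therefore suffices to verify that $\vec{1}^T M_s \osc_s = 0$ in every one-dimensional factor. Here I would appeal to the exactness of an SBP derivative on linear functions: $D_s$ applied to the vector of grid coordinates in direction $s$ yields $\vec{1}$ exactly, so $\vec{1} \in \image D_s$. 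Combined with $M_s \osc_s \in \kernel D_s^T = (\image D_s)^\perp$ (Euclidean orthogonality), this gives the desired identity. I expect this step to be the main obstacle, because it goes beyond mere nullspace consistency and relies on polynomial accuracy of the SBP operator.

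Once the mutual $M$-orthogonality is established, expanding
\begin{equation*}
  F^2 = \I - 2\sum_i P_i + \sum_i P_i^2 + \sum_{i \neq j} P_i P_j = \I - \sum_i P_i = F
\end{equation*}
shows idempotence, while $M F = M - \sum_i M P_i = M - \sum_i P_i^T M = F^T M$ shows $M$-self-adjointness. Hence $F$ is an $M$-orthogonal projection.

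Finally, combining $F^2 = F$ with $M F = F^T M$ yields
\begin{equation*}
  \norm{F \vec{u}}_M^2 = \vec{u}^T F^T M F \vec{u} = \vec{u}^T M F^2 \vec{u} = \scp{\vec{u}}{F \vec{u}}_M \leq \norm{\vec{u}}_M \, \norm{F \vec{u}}_M
\end{equation*}
by Cauchy-Schwarz, so $\norm{F \vec{u}}_M \leq \norm{\vec{u}}_M$ and $\norm{F} \leq 1$.
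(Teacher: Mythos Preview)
Your proposal is correct and follows essentially the same route as the paper: the key point in both is the mutual $M$-orthogonality $\scp{\osc_i}{\osc_j}_M = 0$ for $i\neq j$, obtained from $\vec{1}\in\image D_s$ (first-order exactness) together with $\osc_s\in\kernel D_s^*$ in each one-dimensional factor. The paper compresses all of this into the single line ``$\vec{1}\in\image D_i\perp\kernel D_i^*=\spann\{\osc_i\}$'' and leaves the standard consequences (idempotence, $M$-self-adjointness, norm bound) implicit; your write-up simply spells out those details and correctly flags that the orthogonality step uses polynomial accuracy, not just nullspace consistency.
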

\begin{proof}
  It suffices to note that grid oscillations in different coordinate directions
  are orthogonal, because
  \begin{equation}
    \vec{1} \in \image D_i \perp \kernel D_i^* = \spann \set{\osc_i}.
    \qedhere
  \end{equation}
\end{proof}

The approaches to construct scalar (and similarly vector) potentials in Sections
\ref{sec:SBP-potentials-2D} and \ref{sec:SBP-potentials-3D} depend crucially on
the satisfaction of $\curl \vec{u} = \vec{0}$ (or $\div \vec{u} = \vec{0}$) discretely. Hence, they can
be ill-conditioned and numerical roundoff errors can influence the results, cf.
\cite{silberman2019numerical} for a related argument concerning a ``direct'' and
a ``global linear algebra'' approach to compute vector potentials.
Moreover, they are not really suited for discrete Helmholtz Hodge decompositions
targeted in Section~\ref{sec:helmholtz}. Hence, other approaches will be pursued
in the following, cf. Section~\ref{sec:numerical-implementation}.

\section{Characterisation of Divergence and Curl Free Functions}
\label{sec:div-curl-free}

Combining Theorems~\ref{thm:SBP-scalar-potential-2D}, \ref{thm:SBP-vector-potential-2D},
\ref{thm:SBP-scalar-potential-3D}, and \ref{thm:SBP-vector-potential-3D} yields
a characterisation of vector fields that are both divergence free and curl free.
As before, the continuous case is described at first, cf. \cite[Corollary~2,
Theorem~2 and its proof]{schweizer2018friedrichs}.

\begin{theorem}
\label{thm:continuous-div-curl-0}
  For a vector field $u \in L^2(\Omega)^d$, the following conditions are equivalent.
  \begin{enumerate}[label=\roman*)]
    \item
    $\div u = 0$ and $\curl u = 0$.

    \item
    $u$ is the gradient of a harmonic function $\phi \in H^1(\Omega)$, solving
    the Neumann problem
    \begin{equation}
    \label{eq:div-curl-0-scalar-potential}
      \int_\Omega (\grad \phi) \cdot (\grad \psi)
      =
      \int_{\partial\Omega} (u \cdot \nu) \psi,
      \quad \forall \psi \in H^1(\Omega).
    \end{equation}
  \end{enumerate}
  Here, $\nu$ is the outer unit normal at $\partial \Omega$.
\end{theorem}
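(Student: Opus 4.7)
The plan is to prove the equivalence via two implications, relying on Theorem~\ref{thm:continuous-scalar-potential} for the more substantial direction and on routine integration by parts for the other.

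For the implication (ii) $\Rightarrow$ (i), I would argue that $\curl u = \curl \grad \phi = 0$ is automatic from the identity of classical vector calculus (valid weakly in $H^1$). To see that $\div u = 0$, I would restrict the test functions $\psi$ in \eqref{eq:div-curl-0-scalar-potential} to $C_c^\infty(\Omega)$, making the boundary term vanish, so that $\int_\Omega (\grad \phi) \cdot (\grad \psi) = 0$ for all such $\psi$. This is precisely the weak formulation of $\Delta \phi = 0$, and since $\div u = \div \grad \phi = \Delta \phi$, we obtain $\div u = 0$ in the distributional sense.

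For the harder implication (i) $\Rightarrow$ (ii), I would start from $\curl u = 0$ and invoke Theorem~\ref{thm:continuous-scalar-potential} to obtain a scalar potential $\phi \in H^1(\Omega)$ with $u = \grad \phi$. Then $\Delta \phi = \div \grad \phi = \div u = 0$, so $\phi$ is harmonic. To verify the Neumann weak formulation, I would substitute $u = \grad \phi$ into the left-hand side and integrate by parts using $\div u = 0$:
\begin{equation*}
  \int_\Omega (\grad \phi) \cdot (\grad \psi)
  = \int_\Omega u \cdot \grad \psi
  = -\int_\Omega (\div u)\, \psi + \int_{\partial\Omega} (u \cdot \nu)\, \psi
  = \int_{\partial\Omega} (u \cdot \nu)\, \psi,
\end{equation*}
which is exactly \eqref{eq:div-curl-0-scalar-potential}.

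The only subtlety I expect is the interpretation of the boundary term $\int_{\partial\Omega} (u \cdot \nu)\,\psi$ when $u$ is merely in $L^2(\Omega)^d$: the normal trace $u \cdot \nu$ only lives in $H^{-1/2}(\partial\Omega)$ and the integral must be read as the duality pairing between $H^{-1/2}(\partial\Omega)$ and $H^{1/2}(\partial\Omega)$. Since $\div u = 0 \in L^2(\Omega)$, we have $u \in H(\div;\Omega)$, which makes this trace well-defined and the integration-by-parts identity above rigorous. Apart from this trace interpretation, no further ingredients beyond Theorem~\ref{thm:continuous-scalar-potential} and Green's identity are needed.
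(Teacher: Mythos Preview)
Your argument is correct and is the natural route: invoke Theorem~\ref{thm:continuous-scalar-potential} to produce the scalar potential from $\curl u = 0$, read off harmonicity from $\div u = 0$, and derive the Neumann identity by Green's formula; the converse follows by testing against $C_c^\infty(\Omega)$ and using $\curl\grad = 0$.

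Note, however, that the paper does not supply its own proof of this statement. It is quoted as a continuous-level background result with a reference to \cite[Corollary~2, Theorem~2 and its proof]{schweizer2018friedrichs}, and the only accompanying remark in the paper is precisely the trace subtlety you already flagged: the boundary pairing is to be read as the $H^{-1/2}(\partial\Omega)$--$H^{1/2}(\partial\Omega)$ duality, which is well defined because $u \in H(\div;\Omega)$ \cite[Theorem~I.2.5]{girault1986finite}. So there is nothing to compare against beyond confirming that your handling of the trace matches the paper's note; your proposal would serve as a self-contained proof where the paper defers to the literature.
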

Note that the right hand side of \eqref{eq:div-curl-0-scalar-potential} is
well defined, because the trace of
$\psi \in H^1(\Omega)$
is in $H^{1/2}(\partial \Omega)$ and the normal trace of
$u \in H(\div; \Omega) = \set{ u \in L^2(\Omega) | \div u \in L^2(\Omega)}$
is in $H^{-1/2}(\partial \Omega)$ \cite[Theorem~I.2.5]{girault1986finite}.

Although the theorems guaranteeing the existence of scalar/vector potentials do
not hold discretely, the characteristation of vector fields that are both divergence
and curl free is similar to the one at the continuous level given by
Theorem~\ref{thm:continuous-div-curl-0}.
\begin{theorem}
\label{thm:SBP-div-curl-0}
  If nullspace consistent tensor product SBP operators are applied, the following
  conditions are equivalent for a grid function $\vec{u}$ in two or three space dimensions.
  \begin{enumerate}[label=\roman*)]
    \item \label{itm:SBP-div-curl-0}
    $\div \vec{u} = \vec{0}$ and $\curl \vec{u} = \vec{0}$.

    \item \label{itm:SBP-div-curl-0-scalar-potential}
    $\vec{u}$ is the discrete gradient of a discretely harmonic function
    $\vec{\phi}$, i.e. $D_i D_i \vec{\phi} = \vec{0}$.
  \end{enumerate}
  If $\div \vec{u} = \vec{0}$ and $\curl \vec{u} = \vec{0}$, the scalar potential
  $\vec{\phi}$ can be determined as solution of the Neumann problem
  \begin{equation}
  \label{eq:SBP-div-curl-0-scalar-potential}
    D_i^T M D_i \vec{\phi}
    =
    E_i \vec{u}_i.
  \end{equation}
\end{theorem}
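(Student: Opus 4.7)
My plan is to handle the two directions separately. The direction \ref{itm:SBP-div-curl-0-scalar-potential} $\Rightarrow$ \ref{itm:SBP-div-curl-0} is immediate: assuming $\vec{u} = \grad \vec{\phi}$ with $D_i D_i \vec{\phi} = \vec{0}$, Remark~\ref{rem:discrete-derivatives-commute} gives $\curl \vec{u} = \curl \grad \vec{\phi} = \vec{0}$, and $\div \vec{u} = \div \grad \vec{\phi} = D_i D_i \vec{\phi} = \vec{0}$ by hypothesis.

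For the reverse direction \ref{itm:SBP-div-curl-0} $\Rightarrow$ \ref{itm:SBP-div-curl-0-scalar-potential} I would work with the Neumann problem \eqref{eq:SBP-div-curl-0-scalar-potential} directly. First, I would check that $A := \sum_i D_i^T M D_i$ is symmetric positive semi-definite with $\kernel A = \kernel \grad = \spann\set{\vec{1}}$ by nullspace consistency (using $\vec{\phi}^T A \vec{\phi} = \norm{\grad \vec{\phi}}_M^2$). Combining the SBP identity $E_i = M D_i + D_i^T M$ with $D_i \vec{1} = \vec{0}$, the compatibility condition $\vec{1}^T \sum_i E_i \vec{u}_i = \vec{1}^T M \div \vec{u} = 0$ follows from $\div \vec{u} = \vec{0}$, so the Neumann problem admits a solution $\vec{\phi}$, unique up to an additive constant. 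Testing \eqref{eq:SBP-div-curl-0-scalar-potential} against an arbitrary $\vec{\psi}$ and applying SBP once more, the boundary contribution $\vec{\psi}^T \sum_i E_i \vec{u}_i$ reduces, again via $\div \vec{u} = \vec{0}$, to $\scp{\vec{u}}{\grad \vec{\psi}}_M$; this yields the variational identity $\scp{\grad \vec{\phi}}{\grad \vec{\psi}}_M = \scp{\vec{u}}{\grad \vec{\psi}}_M$ for all $\vec{\psi}$. Hence the residual $\vec{r} := \vec{u} - \grad \vec{\phi}$ is $M$-orthogonal to $\image \grad$, and because $\curl \vec{r} = \curl \vec{u} - \curl \grad \vec{\phi} = \vec{0}$, the orthogonal decompositions from Theorems~\ref{thm:SBP-scalar-potential-2D} and \ref{thm:SBP-scalar-potential-3D} place $\vec{r}$ in the oscillation subspace, i.e.\ $\vec{r}_i = c_i \osc_i$ for some scalars $c_i$.

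The crux, and the step I expect to be the main obstacle, is eliminating these oscillation coefficients. Taking the divergence of $\vec{r}$ and invoking $\div \vec{u} = \vec{0}$ gives $\sum_i c_i D_i \osc_i = \div \vec{r} = -\sum_i D_i D_i \vec{\phi}$. Using the tensor-product structure $D_i \osc_i = D_{s_i} \osc_{s_i} \otimes \vec{1} \otimes \cdots$, where $D_{s_i} \osc_{s_i}$ is nonzero (since $\osc_{s_i} \notin \kernel D_{s_i} = \spann\set{\vec{1}}$) and, as exhibited in Example~\ref{ex:SBP-2-osc}, genuinely varies only in the $s_i$-coordinate, I would match the coordinate-wise dependence on both sides of this identity to conclude $c_i = 0$ for every $i$. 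With $\vec{r} = \vec{0}$ in hand, $\vec{u} = \grad \vec{\phi}$ follows, and harmonicity is then automatic from $0 = \div \vec{u} = \div \grad \vec{\phi} = D_i D_i \vec{\phi}$. The hard part is precisely this last elimination: variational orthogonality and curl-freeness alone only confine $\vec{r}$ to a $d$-dimensional subspace, so the cancellation has to come from $\div \vec{u} = \vec{0}$ combined with the specific structure of the basis oscillations under the $D_s$.
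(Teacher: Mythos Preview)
Your direction \ref{itm:SBP-div-curl-0-scalar-potential} $\Rightarrow$ \ref{itm:SBP-div-curl-0} and your verification that the Neumann problem \eqref{eq:SBP-div-curl-0-scalar-potential} is solvable are both correct and match the paper. The divergence is in how \ref{itm:SBP-div-curl-0} $\Rightarrow$ \ref{itm:SBP-div-curl-0-scalar-potential} is established.

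The paper does not start from the Neumann problem. It first argues directly that $\kernel\div \cap \kernel\curl \subseteq \image\grad \cap \image\rot$ (resp.\ $\image\curl$ in three dimensions), invoking \emph{both} structural decompositions at once: Theorems~\ref{thm:SBP-scalar-potential-2D}/\ref{thm:SBP-scalar-potential-3D} for $\kernel\curl$ \emph{and} Theorems~\ref{thm:SBP-vector-potential-2D}/\ref{thm:SBP-vector-potential-3D} for $\kernel\div$. The point is that the oscillation vectors complementing $\image\grad$ inside $\kernel\curl$ do not lie in $\kernel\div$, and symmetrically for the other family. With $\vec{u} \in \image\grad$ secured up front, one picks $\vec{\phi}$ with $\vec{u} = \grad\vec{\phi}$; harmonicity follows from $0 = \div\vec{u} = D_i D_i\vec{\phi}$, and the Neumann identity is then derived as a consequence via the SBP relation rather than used as a starting point.

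Your route --- build $\vec{\phi}$ from \eqref{eq:SBP-div-curl-0-scalar-potential} first, then kill the residual $\vec{r}$ --- stalls exactly where you anticipate. You reach $\sum_i c_i D_i\osc_i = -\sum_i D_i D_i\vec{\phi}$, but at this stage $\vec{\phi}$ is merely the Neumann solution and is not yet known to be harmonic, so the right-hand side is an \emph{unknown} grid function depending on all coordinates. ``Matching coordinate-wise dependence'' cannot force $c_i = 0$: nothing prevents $\Delta\vec{\phi}$ from absorbing a term of the form $c_i D_i\osc_i$. Indeed, the Neumann equation itself, rewritten via $\div\vec{u}=\vec{0}$ as $\sum_i D_i^T M \vec{r}_i = 0$, is vacuous on the oscillation subspace because $D_i^T M \osc_i = M D_i^*\osc_i = 0$. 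You are using only the $\kernel\curl$ decomposition; the missing ingredient is the companion decomposition of $\kernel\div$, which is precisely what the paper brings in to rule out the oscillation directions before ever writing down \eqref{eq:SBP-div-curl-0-scalar-potential}.
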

\begin{proof}
  Since the grid oscillations appearing in $\kernel \div$ are not in $\kernel \curl$
  and vice versa,
  \begin{equation}
    \kernel \div \cap \kernel \curl \subseteq \image \grad \cap \image \rot
  \end{equation}
  in two space dimensions and
  \begin{equation}
    \kernel \div \cap \kernel \curl \subseteq \image \grad \cap \image \curl
  \end{equation}
  in three space dimensions.

  ``\ref{itm:SBP-div-curl-0} $\iff$ \ref{itm:SBP-div-curl-0-scalar-potential}'':
  Because of $\vec{u} \in \image \grad$ and $\div \vec{u} = \vec{0}$, there is a scalar potential
  $\vec{\phi}$ satisfying $\vec{u}_i = D_i \vec{\phi}$ and $\vec{0} = D_i \vec{u}_i = D_i D_i \vec{\phi}$. Additionally,
  \begin{equation}
    D_i^T M D_i \vec{\phi} = E_i D_i \vec{\phi} - M D_i D_i \vec{\phi} = E_i \vec{u}_i.
  \end{equation}
  Conversely, if $\vec{\phi}$ is a discretely harmonic grid function and $\vec{u}_i = D_i \vec{\phi}$,
  $\div \vec{u} = D_i \vec{u}_i = D_i D_i \vec{\phi} = \vec{0}$ because
  $\vec{\phi}$ is discretely harmonic.
  Additionally, $\curl \vec{u} = \vec{0}$, since $\vec{u}$ is the discrete
  gradient of $\vec{\phi}$ and the discrete curl of a discrete gradient
  vanishes, cf. Remark~\ref{rem:discrete-derivatives-commute}.

  A solution of the discrete Neumann problem \eqref{eq:SBP-div-curl-0-scalar-potential}
  is determined uniquely up to an additive constant, since
  $\kernel D_i^T M D_i = \kernel \grad = \set{\vec{1}}$
  for nullspace consistent SBP operators. Hence, $D_i^T M D_i$ is symmetric and
  positive semidefinite and a solution of the Neumann problem exists if and only
  if the right hand side is orthogonal to the kernel of $D_i^T M D_i$ (both with
  respect to the Euclidean standard inner product and not the one induced by $M$).
  This is the case if $D_i \vec{u}_i = \vec{0}$, since
  \begin{equation}
    \vec{1}^T E_i \vec{u}_i
    =
    \vec{1}^T (M D_i + D_i^T M) \vec{u}_i
    =
    \vec{1}^T M D_i \vec{u}_i.
    \qedhere
  \end{equation}
\end{proof}

\section{Variants of the Helmholtz Hodge Decomposition}
\label{sec:helmholtz}

There are several variants of the Helmholtz Hodge decomposition of a vector field
$u \in L^2(\Omega)$, i.e. decompositions of $u$ into curl free and divergence free
components, e.g.
\begin{equation}
  \forall u \in L^2(\Omega)^2 \;\exists \phi \in H^1(\Omega), v \in H^1(\Omega)\colon
  \quad
  u = \grad \phi + \rot v
\end{equation}
in two space dimensions \cite[Theorem~I.3.2]{girault1986finite} and
\begin{equation}
  \forall u \in L^2(\Omega)^3 \;\exists \phi \in H^1(\Omega), v \in H(\curl; \Omega)\colon
  \quad
  u = \grad \phi + \curl v
\end{equation}
in three space dimensions \cite[Corollary~I.3.4]{girault1986finite}, where additional
(boundary) conditions are used to specify the potentials (e.g. uniquely up to an
additive constant for the scalar potential $\phi$) and guarantee that these
decompositions are orthogonal in $L^2(\Omega)$.
Discretely, such decompositions are not possible in general.
\begin{theorem}
\label{thm:SBP-helmholtz-impossible}
  For nullspace consistent tensor product SBP operators, there are grid functions
  $\vec{u}$ such that
  \begin{equation}
    \begin{cases}
      \vec{u} \notin \image \grad + \image \rot, & \text{ in two space dimensions}, \\
      \vec{u} \notin \image \grad + \image \curl, & \text{ in three space dimensions}.
    \end{cases}
  \end{equation}
  In particular,
  \begin{equation}
    \spann \set{
      \begin{pmatrix} \osc_{12} \\ \vec{0} \end{pmatrix},
      \begin{pmatrix} \vec{0} \\ \osc_{12} \end{pmatrix}
    }
    \subseteq (\image \grad + \image \rot)^\perp
  \end{equation}
  in two space dimensions and
  \begin{equation}
    \spann \set{
      \begin{pmatrix} \osc_{123} \\ \vec{0} \\ \vec{0} \end{pmatrix},
      \begin{pmatrix} \vec{0} \\ \osc_{123} \\ \vec{0} \end{pmatrix},
      \begin{pmatrix} \vec{0} \\ \vec{0} \\ \osc_{123} \end{pmatrix}
    }
    \subseteq (\image \grad + \image \curl)^\perp
  \end{equation}
  in three space dimensions.
\end{theorem}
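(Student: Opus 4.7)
The plan is to identify the orthogonal complement of $\image \grad + \image \rot$ in two dimensions (respectively $\image \grad + \image \curl$ in three dimensions) via the standard identity $(\image A + \image B)^\perp = \kernel A^* \cap \kernel B^*$, where orthogonality is taken with respect to the product-space inner product induced by the block-diagonal mass matrix built from $M$. Once this reduction is in place, the theorem reduces to verifying that each listed generator lies in the kernels of both relevant adjoint operators; the claimed inclusion of the span into the orthogonal complement follows, and the fact that each generator is nonzero immediately exhibits grid functions $\vec{u}$ that are not expressible as $\grad \phi + \rot v$ or $\grad \phi + \curl v$.

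The verification is a short tensor-product calculation. By definition, $D_s^* \osc_s = \vec{0}$ for $s \in \{x, y, z\}$, and the tensor-product formulas for the $D_i^*$ together with $\osc_{12} = \osc_x \otimes \osc_y$ and $\osc_{123} = \osc_x \otimes \osc_y \otimes \osc_z$ yield $D_i^* \osc_{12} = \vec{0}$ for every $i \in \{1, 2\}$ in two dimensions and $D_i^* \osc_{123} = \vec{0}$ for every $i \in \{1, 2, 3\}$ in three dimensions. With these identities at hand, one reads off $\grad^* = (D_1^*, \dots, D_d^*)$ as a row and $\rot^* = (D_2^*, -D_1^*)$ (or the analogous $3 \times 3$ matrix of $\pm D_i^*$ entries for $\curl^*$): applied to any of the listed single-block generators, these adjoints produce only terms of the form $\pm D_j^* \osc_{12}$ or $\pm D_j^* \osc_{123}$, each of which vanishes, so the generator lies in both kernels as required.

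The main care point, rather than a genuine obstacle, is fixing and then consistently using the block $M$-inner product on the product space, so that $(\image \grad)^\perp$ is correctly identified with $\kernel \grad^*$ (and similarly for $\rot^*$ and $\curl^*$) and the intersection formula for the complement of a sum goes through cleanly. Once that bookkeeping is settled, the rest reduces to the mechanical tensor-product verification above, and the nonzeroness of the generators follows from the nonzeroness of the one-dimensional oscillations $\osc_x, \osc_y, \osc_z$, completing the argument.
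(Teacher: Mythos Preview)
Your argument is correct. The identity $(\image A + \image B)^\perp = \kernel A^* \cap \kernel B^*$ reduces the claim to checking $D_i^* \osc_{12} = \vec{0}$ (respectively $D_i^* \osc_{123} = \vec{0}$) for every $i$, which follows immediately from the tensor-product structure and $D_s^* \osc_s = \vec{0}$; nonzeroness of the generators then gives the existence statement.

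The paper's proof reaches the same conclusion but organises the work differently: it first invokes the earlier structural Theorems~\ref{thm:SBP-scalar-potential-2D}, \ref{thm:SBP-vector-potential-2D}, \ref{thm:SBP-scalar-potential-3D}, \ref{thm:SBP-vector-potential-3D} to bound $\dim(\image\grad + \image\rot) \le 2N_1N_2 - 2$ (and the analogous $3N_1N_2N_3 - 3$ bound in three dimensions), deducing existence from the strict dimension drop, and then records the orthogonality of the listed oscillation vectors in a single closing sentence. Your route is more self-contained, since it bypasses the dimension counting and the dependence on those earlier theorems entirely; the paper's route, on the other hand, makes visible that the codimension is at least $d$ and ties the result back into the kernel/image characterisations already developed. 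The underlying computation---that each $D_i^*$ annihilates the full oscillation $\osc_{12}$ or $\osc_{123}$---is the same in both.
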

\begin{proof}
  Using Theorems~\ref{thm:SBP-scalar-potential-2D} and \ref{thm:SBP-vector-potential-2D},
  \begin{equation}
    \dim (\image \grad + \image \rot)
    \leq
    \dim \image \grad + \dim \image \rot
    =
    2 N_1 N_2 - 2
  \end{equation}
  in two space dimensions. In three space dimensions, Theorems~
  \ref{thm:SBP-scalar-potential-3D} and \ref{thm:SBP-vector-potential-3D} yield
  \begin{equation}
    \dim (\image \grad + \image \curl)
    \leq
    \dim \image \grad + \dim \image \curl
    =
    3 N_1 N_2 N_3 - 3.
  \end{equation}

  Finally, note that grid oscillations are orthogonal to the image of SBP
  derivative operators.
\end{proof}

\begin{remark}
\label{rem:SBP-helmholtz-impossible}
  In general, there is no equality in the subspace relations of
  Theorem~\ref{thm:SBP-helmholtz-impossible}. Up to now, no complete characterisation
  of $(\image \grad + \image \curl)^\perp$ or $(\image \grad)^\perp \cap
  (\image \curl)^\perp = (\kernel \grad^*) \cap (\kernel \curl^*)$ has been obtained.
  In numerical experiments, some sort of grid oscillations always seem to be involved.
\end{remark}

Nevertheless, it is possible to compute orthogonal decompositions of the form
\begin{equation}
\label{eq:general-helmholtz-decomposition}
  \begin{cases}
    \vec{u} = \grad \vec{\phi} + \rot  \vec{v} + \vec{r}, & \vec{r} \perp \image \grad, \image \rot,
      \quad \text{ \phantom{c}in two space dimensions}, \\
    \vec{u} = \grad \vec{\phi} + \curl \vec{v} + \vec{r}, & \vec{r} \perp \image \grad, \image \curl,
      \quad \text{ in three space dimensions}.
  \end{cases}
\end{equation}
In the following, only the three dimensional case will be described. In two
space dimensions, some occurrences of $\curl$ have to be substituted by $\rot$.

In the literature, variants of the Helmholtz Hodge decomposition in bounded
domains are most often presented with an emphasis on boundary conditions, cf.
\cite{girault1986finite,fernandes1997magnetostatic,amrouche1998vector,schweizer2018friedrichs}.
With this emphasis, the following two choices of boundary conditions appear
most often in the literature.
\begin{proposition}
\label{pro:continuous-HHD-BCs}
  Suppose $u \in L^2(\Omega)^3$.
  \begin{enumerate}[label=\roman*)]
    \item
    There exist $\phi \in H^1(\Omega)$ with $\int_\Omega \phi = 0$ and
    $v \in H(\Omega,\curl) \cap H(\Omega,\div)$ with
    $\div v = 0$ and $\nu \times v|_{\partial \Omega} = 0$
    such that \eqref{eq:general-helmholtz-decomposition} with $r = 0$ is
    an orthogonal decomposition.

    \item
    There exist $\phi \in H^1_0(\Omega)$ and
    $v \in H(\Omega,\curl) \cap H(\Omega,\div)$ with
    $\div v = 0$ and $\nu \cdot v|_{\partial \Omega} = 0$
    such that \eqref{eq:general-helmholtz-decomposition} with $r = 0$ is
    an orthogonal decomposition.
  \end{enumerate}
\end{proposition}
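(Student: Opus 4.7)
The plan is to recover both decompositions by first producing the scalar potential as the solution of an elliptic problem, then identifying the residual $w := u - \grad \phi$ as an element of the appropriate solenoidal subspace of $L^2(\Omega)^3$, and finally invoking a classical existence theorem for a vector potential of $w$ with the prescribed boundary conditions. For case (i), I would obtain $\phi \in H^1(\Omega)/\R$, normalised to zero mean, as the Lax--Milgram solution of the weak Neumann problem
\[ \int_\Omega \grad \phi \cdot \grad \psi = \int_\Omega u \cdot \grad \psi \quad \forall\, \psi \in H^1(\Omega), \]
using the Poincaré--Wirtinger inequality for coercivity on the quotient space. The residual $w$ then satisfies $\int_\Omega w \cdot \grad \psi = 0$ for every $\psi \in H^1(\Omega)$, so $\div w = 0$ weakly in $\Omega$ and the normal trace $\nu \cdot w$ vanishes in $H^{-1/2}(\partial\Omega)$.

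Next I would invoke a classical vector potential theorem on the cuboid: since $\Omega$ is simply connected with connected boundary, every such $w \in L^2(\Omega)^3$ admits a $v \in H(\Omega,\curl)\cap H(\Omega,\div)$ satisfying $w = \curl v$, $\div v = 0$ and $\nu \times v = 0$ on $\partial \Omega$ (cf.\ Theorems I.3.5/I.3.6 in \cite{girault1986finite}). Orthogonality of $\grad\phi$ and $\curl v$ in $L^2$ then follows from the identity
\[ \div(\grad\phi \times v) = v \cdot \curl(\grad\phi) - \grad\phi \cdot \curl v = -\grad\phi \cdot \curl v, \]
integrated over $\Omega$, combined with $(\grad\phi \times v)\cdot\nu = -(\nu \times v)\cdot\grad\phi$ and the boundary condition $\nu \times v = 0$, which kills the resulting surface integral.

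For case (ii) I would pose the same elliptic problem but with Dirichlet data, obtaining $\phi \in H^1_0(\Omega)$ by Lax--Milgram with coercivity supplied by the Poincaré inequality. Setting $w := u - \grad\phi$ again yields $\div w = 0$ weakly, now with no boundary constraint on $w$, and the companion vector potential theorem supplies $v \in H(\Omega,\curl)\cap H(\Omega,\div)$ with $w = \curl v$, $\div v = 0$ and $\nu \cdot v = 0$ on $\partial\Omega$. Here orthogonality is immediate after one integration by parts, using $\phi|_{\partial\Omega} = 0$ and $\div\curl v = 0$. The principal obstacle in both cases is the vector potential step: it is where the topology of $\Omega$ enters in an essential way, since for a general Lipschitz domain harmonic cohomology fields would appear as additional summands, and the cuboid hypothesis is what ensures these spaces are trivial.
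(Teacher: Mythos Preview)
Your argument is correct. The paper itself does not give a proof of this proposition; it states it as a classical result and then, in the subsequent Proposition~\ref{pro:continuous-projections-2-components}, sketches an alternative derivation based entirely on orthogonal projections. So the relevant comparison is with that sketch.

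The two routes differ in emphasis and order. You solve one elliptic problem (Neumann for case~(i), Dirichlet for case~(ii)) to obtain $\phi$, identify $w = u - \grad\phi$ as lying in the appropriate divergence-free subspace, and then invoke a vector-potential existence theorem from \cite{girault1986finite} that already delivers $v$ with the correct boundary condition and $\div v = 0$; orthogonality is then \emph{verified} from the boundary condition on $v$ (case~(i)) or on $\phi$ (case~(ii)). The paper's sketch instead performs \emph{two} projections, onto $\image\grad$ and onto $\image\curl$, appeals to $L^2(\Omega)^3 = \image\grad + \image\curl$ to force $r=0$, and then \emph{derives} the boundary conditions on $v$ and $\phi$ as consequences of the orthogonality relations $\curl v \perp \image\grad$ resp.\ $\grad\phi \perp \image\curl$; the gauge condition $\div v = 0$ is arranged afterwards by adding a gradient. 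Your approach is the standard PDE/functional-analytic one and is arguably cleaner as a self-contained proof, since it localises the hard work in one cited existence theorem. The paper's projection viewpoint, by contrast, is what it actually needs later: it makes transparent that the two variants arise from the two possible orderings of non-commuting projections, which is precisely the structure carried over to the discrete SBP setting.
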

In practice, these different variants of the Helmholtz Hodge decomposition
can be obtained by projecting $u \in L^2(\Omega)^3$ onto $\image \grad$
and $\image \curl$.
The projections onto the closed subspaces $\image \grad, \image \curl$
(with proper choice of domain of definition) of $L^2(\Omega)^3$ commute
if and only if the subspaces are orthogonal, which is not the case.
Hence, the order of the projections matters and there are (at least)
two different choices:
\begin{enumerate}
  \item
  Firstly, project $\vec{u}$ onto $\image \grad$, yielding $\vec{u} - \grad \vec{\phi} \perp \image \grad$.
  Secondly, project the remainder $\vec{u} - \grad \vec{\phi}$ onto $\image \curl$, yielding
  $\vec{r} = \vec{u} - \grad \vec{\phi} - \curl \vec{v} \perp \image \curl, \image \grad$.

  \item
  Firstly, project $\vec{u}$ onto $\image \curl$, yielding $\vec{u} - \curl \vec{v} \perp \image \curl$.
  Secondly, project the remainder $\vec{u} - \curl \vec{v}$ onto $\image \grad$, yielding
  $\vec{r} = \vec{u} - \curl \vec{v} - \grad \vec{\phi} \perp \image \grad, \image \curl$.
\end{enumerate}
At the continuous level, the interpretation of different variants of the
Helmholtz Hodge decomposition via projections is given as follows.

\begin{proposition}
\label{pro:continuous-projections-2-components}
  Suppose $u \in L^2(\Omega)^3$ and consider the derivative operators
  $\grad\colon H^1(\Omega) \to L^2(\Omega)^3$ and
  $\curl\colon H(\curl, \Omega) \to L^2(\Omega)^3$.
  \begin{enumerate}[label=\roman*)]
    \item \label{itm:continuous-grad-curl}
    Projecting $u$ onto $\image \grad$ and the remainder onto $\image \curl$
    yields an orthogonal decomposition \eqref{eq:general-helmholtz-decomposition}
    with $r = 0$, $\int_\Omega \phi = 0$, and
    $\div v = 0, \nu \times v|_{\partial \Omega} = 0$.

    \item \label{itm:continuous-curl-grad}
    Projecting $u$ onto $\image \curl$ and the remainder onto $\image \grad$
    yields an orthogonal decomposition \eqref{eq:general-helmholtz-decomposition}
    with $r = 0$, $\div v = 0, \nu \cdot v|_{\partial \Omega} = 0$, and
    $\phi|_{\partial \Omega} = 0$.
  \end{enumerate}
\end{proposition}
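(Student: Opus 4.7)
The plan is to interpret each step as an orthogonal projection in $L^2(\Omega)^3$, characterise the projection through a variational equation, and read off the boundary conditions via integration by parts. The key auxiliary facts are the classical descriptions of the orthogonal complements
\begin{equation*}
  (\image \grad)^\perp = \set{w \in L^2(\Omega)^3 | \div w = 0,\ \nu \cdot w|_{\partial \Omega} = 0},
\end{equation*}
and dually for $(\image \curl)^\perp$ in terms of curl-free fields with vanishing tangential trace, combined with the representation theorems of Section~\ref{sec:potentials} applied on the (simply connected) cuboid $\Omega$ with connected boundary.

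For part \ref{itm:continuous-grad-curl}, I first project $u$ onto the closed subspace $\image \grad$, which produces a unique element $\grad \phi$; a unique representative $\phi \in H^1(\Omega)$ is singled out by requiring $\int_\Omega \phi = 0$. The variational condition $\scp{u - \grad \phi}{\grad \psi}_{L^2} = 0$ for all $\psi \in H^1(\Omega)$, after Green's first identity, yields $\div(u - \grad \phi) = 0$ in $\Omega$ and $\nu \cdot (u - \grad \phi)|_{\partial \Omega} = 0$. By Theorem~\ref{thm:continuous-vector-potential} with this normal-trace boundary condition, the remainder equals $\curl v$ for some $v \in H(\Omega,\curl) \cap H(\Omega,\div)$. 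Imposing the gauge $\div v = 0$ and $\nu \times v|_{\partial \Omega} = 0$ removes the remaining $\curl$-kernel (gradients) on the simply connected $\Omega$ and fixes $v$ uniquely. Projecting the remainder onto $\image \curl$ therefore returns it exactly, so $r = 0$.

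Part \ref{itm:continuous-curl-grad} is dual. Projecting $u$ onto $\image \curl$ first gives $\curl v$ with $\scp{u - \curl v}{\curl w}_{L^2} = 0$ for every $w \in H(\Omega,\curl)$. Integration by parts using $\int_\Omega F \cdot \curl w - \int_\Omega \curl F \cdot w = \int_{\partial \Omega} (\nu \times F) \cdot w$ produces $\curl(u - \curl v) = 0$ in $\Omega$ and $\nu \times (u - \curl v)|_{\partial \Omega} = 0$. By Theorem~\ref{thm:continuous-scalar-potential}, the remainder is $\grad \phi$; the vanishing tangential trace forces $\phi$ to be constant on the (connected) boundary, and shifting by that constant yields $\phi \in H_0^1(\Omega)$. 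The complementary gauge $\div v = 0$, $\nu \cdot v|_{\partial \Omega} = 0$ eliminates the kernel of $\curl$ on this space and fixes $v$. Again the second projection contributes no residual, so $r = 0$.

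The main obstacle is the clean identification of the two orthogonal complements with function spaces defined through normal- and tangential-trace boundary conditions — in particular the nontrivial step that a divergence-free $L^2$ field with vanishing normal trace on a simply connected Lipschitz domain actually lies in $\image \curl$, and its dual for the tangential trace. These are classical but technical results about $H(\div)$ and $H(\curl)$ traces; I would invoke them via \cite{girault1986finite,schweizer2018friedrichs}. Once these are granted, the rest reduces to the variational characterisation of orthogonal projections, integration by parts, and the gauge choices stated above.
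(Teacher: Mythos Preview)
Your proposal is correct and follows essentially the same approach as the paper's sketch: both characterise the projections via the associated variational (normal) problems, translate orthogonality into PDE and trace conditions by integration by parts, and appeal to the classical existence/gauge results in \cite{girault1986finite,schweizer2018friedrichs} for the stated properties of $\phi$ and $v$. The only cosmetic difference is that in part~\ref{itm:continuous-grad-curl} the paper derives $\nu \times v|_{\partial\Omega} = 0$ from the orthogonality $\curl v \perp \image\grad$ via an integration-by-parts identity and then adds a gradient to enforce $\div v = 0$, whereas you invoke the strong vector-potential existence theorem for the divergence-free, zero-normal-trace remainder directly; the underlying content is the same.
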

\begin{proof}[Sketch of the proof]
  The projection of $\tilde u \in L^2(\Omega)^3$ onto $\image \grad$ is given by
  the solution of the associated normal problem, i.e. the Neumann problem
  \begin{equation}
    \int_\Omega (\grad \psi) \cdot (\grad \phi) = \int_\Omega (\grad \psi) \tilde u,
    \quad
    \forall \psi \in H^1(\Omega),
  \end{equation}
  yielding a unique solution $\phi \in \nicefrac{H^1(\Omega)}{\kernel \grad}$.
  Since $\kernel \grad = \spann \set{1}$, $\nicefrac{H^1(\Omega)}{\kernel \grad}
  = \nicefrac{H^1(\Omega)}{\R}$ can be identified with $(\kernel \grad)^\perp
  = \set{ \phi \in H^1(\Omega) | \int_\Omega \phi = 0}$.

  Similarly, the projection of $\tilde u \in L^2(\Omega)$ onto $\image \curl$ is
  given by the solution of the associated normal problem, i.e.
  \begin{equation}
    \int_\Omega (\curl w) \cdot (\curl v) = \int_\Omega (\curl w) \tilde u,
    \quad
    \forall w \in H(\curl, \Omega),
  \end{equation}
  yielding a unique solution $v \in \nicefrac{H(\curl, \Omega)}{\kernel \curl}$.

  For both cases, $L^2(\Omega)^3 = \image \grad + \image \curl$ can be used to
  conclude $r = 0$.

  For \ref{itm:continuous-grad-curl}, $\phi$ is specified as required and the
  boundary condition $\nu \times v|_{\partial \Omega} = 0$ is implied by
  $\curl v \perp \image \grad$, since
  \begin{equation}
    \int_\Omega (\curl v) \cdot (\grad \psi)
    =
    \int_{\partial \Omega} (\nu \times v) \cdot (\grad \psi).
  \end{equation}
  The additional condition $\div v = 0$ can be obtained by adding a suitable
  gradient $\in \kernel \curl$ to $v$.

  For \ref{itm:continuous-curl-grad}, the conditions $\div v = 0,
  \nu \cdot v|_{\partial \Omega} = 0$ can be obtained by adding a suitable
  gradient $\in \kernel \curl$, solving an inhomogeneous Neumann problem.
  The boundary condition for $\phi$ is implied by the orthogonality condition
  $\grad \phi \perp \image \curl$, since
  \begin{equation}
    \int_\Omega (\grad \phi) \cdot (\curl w)
    =
    \int_{\partial \Omega} \phi \nu \cdot (\curl w).
    \qedhere
  \end{equation}
\end{proof}
\begin{remark}
  The Helmholtz Hodge decompositions of
  Proposition~\ref{pro:continuous-projections-2-components}
  are exactly the ones of \cite[Theorem~2]{schweizer2018friedrichs}, although
  the (existence) proof given there follows partially another order and does not
  mention the projection onto subspaces.
\end{remark}

The constraints on $\phi$ and $v$ given in
Proposition~\ref{pro:continuous-projections-2-components} cannot be mimicked
completely at the discrete level. While it is always possible to choose a discrete
scalar potential $\vec{\phi}$ with vanishing mean value (by adding a suitable constant),
prescription of boundary conditions and the divergence of $\vec{v}$ are not always
possible. For example, the Laplacian of a scalar field can be prescribed in
$\Omega$ at the continuous level and (Neumann, Dirichlet) boundary conditions
can be prescribed additionally. This is not possible at the discrete level in
all cases, since the system is overdetermined if both the derivative and boundary
values are prescribed at $\partial \Omega$, cf. \cite{ranocha2018numerical}.
Additionally, there is
\begin{theorem}
\label{thm:SBP-divergence-free-vector-potential}
  Suppose that nullspace consistent tensor product SBP operators which are at
  least first order accurate in the complete domain are applied in two or three
  space dimensions.
  Then,
  \begin{equation}
    \dim \image \div > \dim \image \div|_{\kernel \curl}.
  \end{equation}
  Hence, it is not always possible to choose a divergence free vector potential.
\end{theorem}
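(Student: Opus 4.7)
The plan is to compare $\dim \image \div$ with $\dim \image \div|_{\kernel \curl}$ by rank-nullity, reducing the desired strict inequality to a lower bound on $\dim \kernel \Delta$, where $\Delta = \sum_{i=1}^d D_i^2$ is the discrete Laplacian and $d \in \set{2,3}$. Let $N = N_1 N_2$ in two and $N = N_1 N_2 N_3$ in three space dimensions.

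First, Theorems~\ref{thm:SBP-vector-potential-2D} and \ref{thm:SBP-vector-potential-3D} give $\dim \kernel \div = (d-1)N + 1$, and rank-nullity applied to $\div\colon \R^{dN} \to \R^N$ yields $\dim \image \div = N - 1$. Next, rank-nullity applied to the restriction $\div|_{\kernel \curl}\colon \kernel \curl \to \R^N$ yields
\begin{equation*}
  \dim \image \div|_{\kernel \curl}
  = \dim \kernel \curl - \dim(\kernel \div \cap \kernel \curl).
\end{equation*}
By Theorem~\ref{thm:SBP-div-curl-0} the joint kernel consists precisely of discrete gradients of discretely harmonic grid functions, and since $\kernel \grad = \spann \set{\vec{1}}$ this gives $\dim(\kernel \div \cap \kernel \curl) = \dim \kernel \Delta - 1$. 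Combining with $\dim \kernel \curl = N + 1$ in 2D and $N + 2$ in 3D from Theorems~\ref{thm:SBP-scalar-potential-2D} and \ref{thm:SBP-scalar-potential-3D}, the sought inequality $\dim \image \div|_{\kernel \curl} < N - 1$ reduces to $\dim \kernel \Delta \geq 4$ in 2D and $\dim \kernel \Delta \geq 5$ in 3D.

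This is where the first order accuracy hypothesis enters. Consistency gives $D_s \vec{1} = \vec{0}$ and first order exactness in the complete domain gives $D_s \vec{s} = \vec{1}$ for the coordinate vectors ($s \in \set{x, y, z}$), hence $D_s^2 \vec{s} = D_s \vec{1} = \vec{0}$. Applying the tensor product operators $D_i^2$ to a grid function $\vec{a} \otimes \vec{b}$ (or $\vec{a} \otimes \vec{b} \otimes \vec{c}$ in 3D) then shows that every such product with $\vec{a} \in \set{\vec{1}, \vec{x}}$, $\vec{b} \in \set{\vec{1}, \vec{y}}$, and $\vec{c} \in \set{\vec{1}, \vec{z}}$ lies in $\kernel \Delta$. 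This yields four independent harmonic grid functions $\vec{1} \otimes \vec{1}, \vec{x} \otimes \vec{1}, \vec{1} \otimes \vec{y}, \vec{x} \otimes \vec{y}$ in 2D and eight analogous monomials in 3D, comfortably exceeding the required thresholds.

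Finally, the consequence that divergence free vector potentials need not exist follows from the strict gap: a vector potential $\vec{v}$ of a divergence free field $\vec{u}$ is determined only up to addition of an element of $\kernel \curl$, so imposing $\div \vec{v} = 0$ requires the initial $\div \vec{v}_0$ to lie in $\image \div|_{\kernel \curl}$; picking $\psi \in \image \div \setminus \image \div|_{\kernel \curl}$, writing $\psi = \div \vec{v}_0$, and taking $\vec{u} = \curl \vec{v}_0$ produces a field for which no vector potential can be chosen divergence free. The main obstacle is the lower bound on $\dim \kernel \Delta$; everything else is bookkeeping anchored in Theorem~\ref{thm:SBP-div-curl-0} and the dimension counts already established in the preceding sections.
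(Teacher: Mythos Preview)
Your proof is correct and follows essentially the same strategy as the paper: both arguments compute $\dim \image \div = N-1$ via rank--nullity, reduce the bound on $\dim \image \div|_{\kernel \curl}$ to a lower bound on $\dim \kernel \Delta$, and then exhibit the tensor-product monomials built from $\set{\vec{1},\vec{x}}\times\set{\vec{1},\vec{y}}\,(\times\set{\vec{1},\vec{z}})$ as independent discretely harmonic functions. The one noteworthy difference is in the intermediate step: the paper bounds $\dim \image \div|_{\kernel \curl}$ by splitting $\kernel \curl = \image \grad \oplus \spann\set{\osc_i}$ and obtains only the inequality $\dim \image \div|_{\kernel \curl} \le N - \dim\kernel\Delta + d$, whereas you invoke Theorem~\ref{thm:SBP-div-curl-0} to identify $\kernel\div\cap\kernel\curl$ with gradients of harmonic grid functions and obtain the exact value $\dim \image \div|_{\kernel \curl} = N - \dim\kernel\Delta + d$. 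This is a mild sharpening (it shows the paper's inequality is in fact an equality) and makes the final paragraph's explicit construction of a $\vec{u}$ with no divergence-free vector potential---which the paper omits---cleaner, but the essential idea is the same.
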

\begin{proof}
  Consider at first the case of three space dimensions.
  Using Theorem~\ref{thm:SBP-vector-potential-3D},
  \begin{equation}
    \dim \image \div
    =
    3 N_1 N_2 N_3 - \underbrace{\dim \kernel \div}_{= 2 N_1 N_2 N_3 + 1}
    =
    N_1 N_2 N_3 - 1.
  \end{equation}
  Using Theorem~\ref{thm:SBP-scalar-potential-3D},
  \begin{equation}
    \kernel \curl
    =
    \image \grad \oplus
    \spann \set{
      \begin{pmatrix} \osc_1 \\ \vec{0} \\ \vec{0} \end{pmatrix},
      \begin{pmatrix} \vec{0} \\ \osc_2 \\ \vec{0} \end{pmatrix},
      \begin{pmatrix} \vec{0} \\ \vec{0} \\\osc_3 \end{pmatrix}
    }.
  \end{equation}
  Hence,
  \begin{multline}
    \dim \image \div|_{\kernel \curl}
    \leq
    \dim \image \div|_{\image \grad} + 3
    \\
    =
    \dim \image D_i D_i + 3
    =
    N_1 N_2 N_3 - \dim \kernel D_i D_i + 3,
  \end{multline}
  where $D_i D_i$ is the discrete (wide stencil) Laplacian defined for scalar fields.
  Because of the accuracy of the SBP derivative operator,
  \begin{equation}
    \spann \set{\vec{1},
      \vec{x} \otimes \vec{1} \otimes \vec{1},
      \vec{1} \otimes \vec{y} \otimes \vec{1},
      \vec{1} \otimes \vec{1} \otimes \vec{z},
      \vec{x} \otimes \vec{y} \otimes \vec{1},
      \vec{x} \otimes \vec{1} \otimes \vec{z},
      \vec{1} \otimes \vec{y} \otimes \vec{z},
      \vec{x} \otimes \vec{y} \otimes \vec{z}
    }
  \end{equation}
  is a subspace of $\kernel D_i D_i$ and $\dim \kernel D_i D_i \geq 8$. Hence,
  \begin{equation}
    \dim \image \div
    =
    N_1 N_2 N_3 - 1
    >
    N_1 N_2 N_3 - 5
    \geq
    \dim \image \div|_{\kernel \curl}.
  \end{equation}

  In two space dimensions, the computations are similar and yield
  \begin{equation}
    \dim \image \div
    =
    N_1 N_2 - 1
    >
    N_1 N_2 - 2
    \geq
    \dim \image \div|_{\kernel \curl}.
    \qedhere
  \end{equation}
\end{proof}

\subsection{Numerical Implementation}
\label{sec:numerical-implementation}

In order to compute discrete Helmholtz Hodge decompositions, the projections onto
$\image \grad$, $\image \curl$ are performed numerically. In particular, least norm
least squares solutions will be sought, i.e.
\begin{equation}
  \min_{\vec{\phi}} \norm{\vec{\phi}}_M^2 \quad \st \vec{\phi} \in \argmin \norm{\vec{u} - \grad \vec{\phi}}_M^2
\end{equation}
and
\begin{equation}
  \min_{\vec{v}} \norm{\vec{v}}_M^2 \quad \st \vec{v} \in \argmin \norm{\vec{u} - \curl \vec{v}}_M^2.
\end{equation}
The same approach is used for scalar potentials of curl free vector fields
and divergence free vector fields in three space dimensions (substitute $\curl$
by $\rot$ in two space dimensions).

There are several iterative numerical methods to solve these problems such as
LSQR \cite{paige1982lsqr,paige1982algorithm} based on CG, LSMR \cite{fong2011lsmr}
based on MINRES, and LSLQ \cite{estrin2019lslq} based on SYMMLQ.
In order to use existing implementations of these methods which are based on
the Euclidean scalar product, a scaling will be described and applied in the
following. This scaling by the square root of the mass matrix transforms properties
of the iterative methods based on the Euclidean scalar product and norm (such as
error/residual monotonicity) to the norm induced by the mass matrix. Additionally,
the projections become orthogonal with respect to the SBP scalar product.
In three space dimensions, the scalings are
\begin{itemize}
  \item
  \verb|phi = sqrtM \ linsolve(sqrtMvec*grad/sqrtM, sqrtMvec*u)| for scalar
  potentials and

  \item
  \verb|v = sqrtMvec \ linsolve(sqrtMvec*curl/sqrtMvec, sqrtMvec*u)| for vector potentials,
\end{itemize}
where $\texttt{sqrtM} = \sqrt{M}$, $\texttt{sqrtMvec} = \I_3 \otimes \sqrt{M}$,
\verb|linsolve| denotes a linear solver such as LSQR or LSMR, and the other
notation should be clear. Note that the computation of the square root of the
mass matrix is inexpensive for diagonal mass matrices.

\section{Numerical Examples}
\label{sec:numerical-examples}

In this section, some numerical examples using the methods discussed hitherto
will be presented. The classical SBP operators of \cite{mattsson2004summation}
will be used, since they are widespread in applications. Optimised operators
such as the ones of \cite{mattsson2014optimal,mattsson2018boundary} would be
very interesting because of their increased accuracy. However, a detailed comparison
of different operators is out of the scope of this article.

The least square least norm problems are solved using Krylov methods implemented
in the package
IterativeSolvers.jl\footnote{\url{https://github.com/JuliaMath/IterativeSolvers.jl},
version \texttt{v0.8.1}.} in Julia \cite{bezanson2017julia}.
To demonstrate that multiple solvers can be used, LSQR is applied in two space
dimensions and LSMR in three space dimensions. In these tests, LSMR has been
more performant than LSQR, i.e. similar errors of the potentials have been reached
in less runtime.

The source code for all numerical examples and figures (including
Figure~\ref{fig:grid_oscillations}) is published in \cite{ranocha2019discreteRepro}.

\subsection{Remaining Term \texorpdfstring{$\vec{r}$}{r} and Grid Oscillations}

As shown in Theorem~\ref{thm:SBP-helmholtz-impossible}, a discrete Helmholtz
decomposition \eqref{eq:general-helmholtz-decomposition} will in general have a
non-vanishing remaining term $\vec{r} \neq \vec{0}$, contrary to the continuous case.
As mentioned in Remark~\ref{rem:SBP-helmholtz-impossible}, the remainder
$\vec{r} = \vec{u} - \grad \vec{\phi} - \rot \vec{v}$ (in two space dimensions) seems to be linked to
some sort of grid oscillations.

Using the test problem of \cite{ahusborde2007primal}, given by
\begin{equation}
\label{eq:test-problem-ahusborde2007primal}
\begin{gathered}
  u(x_1, x_2) = \grad \phi + \rot v,
  \\
  \phi(x_1, x_2) = \sin(\pi(x_1 + x_2)),
  \quad
  v(x_1, x_2) = -\frac{1}{\pi} \sin(\pi x_1) \sin(\pi x_2),
\end{gathered}
\end{equation}
in the domain $[-1,1]^2$, the irrotational part $u_\mathrm{irr} = \grad \phi$
and the solenoidal part $u_\mathrm{sol} = \rot v$ can be computed exactly.
For this problem, the projection onto $\image \grad$ is performed at first,
in accordance with the conditions satisfied by the potentials $\phi, \psi$,
cf. Proposition~\ref{pro:continuous-projections-2-components}.

Applying the sixth order SBP operator of \cite{mattsson2004summation} on a grid
using $N_1 = N_2 = 60$ nodes in each coordinate direction yields the remainder
shown in Figure~\ref{fig:remainder_r}. While the components of the remainder
are not simple grid oscillations $\osc_1, \osc_2, \osc_{12}$, they are clearly
of a similar nature. Additionally, the amplitude of the remainder is approximately
four orders of magnitude smaller than that of the initial vector field $u$.
The results for other grid resolutions and orders of the operators are similar.

\begin{figure}[ht]
\centering
  \begin{subfigure}{0.49\textwidth}
    \centering
    \includegraphics[width=\textwidth]{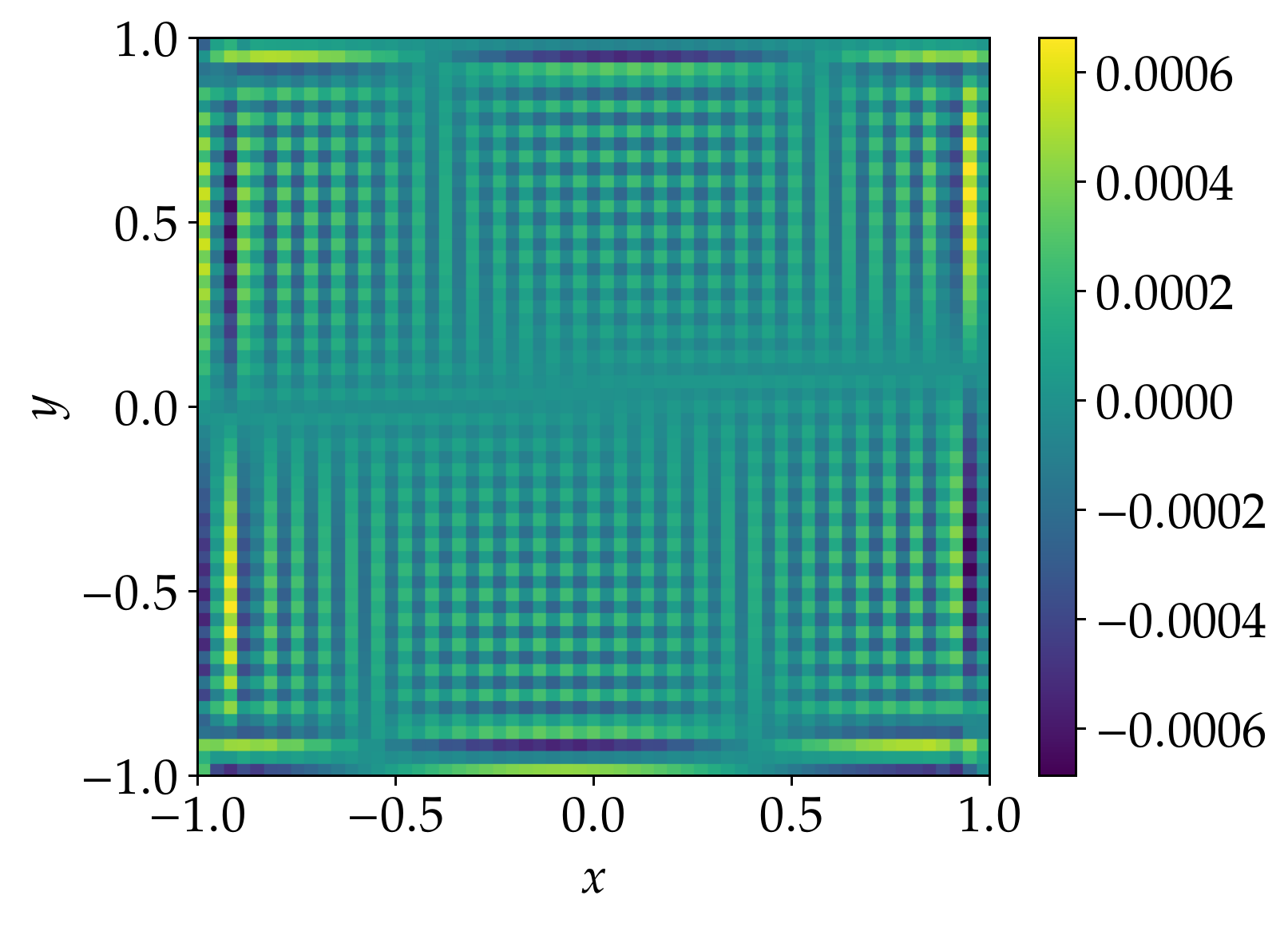}
    \caption{First component $\vec{r}_1$.}
  \end{subfigure}%
  \begin{subfigure}{0.49\textwidth}
    \centering
    \includegraphics[width=\textwidth]{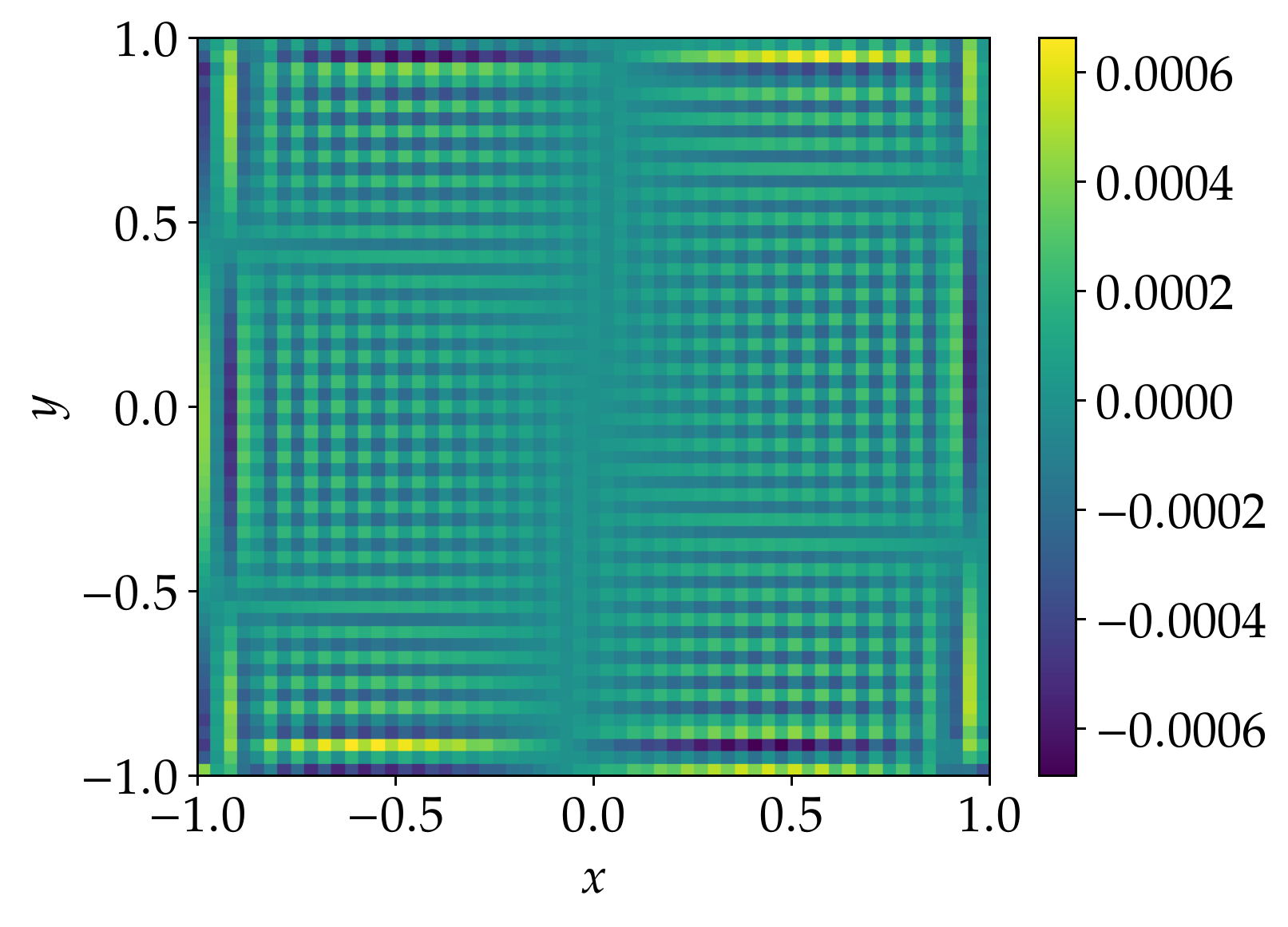}
    \caption{Second component $\vec{r}_2$.}
  \end{subfigure}
  \caption{Remainder $\vec{r} = \vec{u} - \grad \vec{\phi} - \rot \vec{v}$ of the discrete Helmholtz
           Hodge decomposition using the sixth order operator of
           \cite{mattsson2004summation} and $N_1 = N_2 = 60$ grid points per
           coordinate direction for the problem given by
           \eqref{eq:test-problem-ahusborde2007primal}.}
  \label{fig:remainder_r}
\end{figure}

Because of the scaling by the square root of the mass matrix described in
Section~\ref{sec:numerical-implementation}, the discrete projections are
(numerically) orthogonal with respect to the scalar product induced by the
mass matrix $M$. In this example,
\begin{equation}
\begin{aligned}
  \scp{\vec{u} - \grad \vec{\phi}}{\grad \vec{\phi}}_M &= \num{-2.15e-15},
  \\
  \scp{\vec{u} - \grad \vec{\phi} - \rot \vec{v}}{\rot \vec{v}}_M &= \num{9.26e-15}.
\end{aligned}
\end{equation}

\subsection{Convergence Tests in Two Space Dimensions}

Using the same setup \eqref{eq:test-problem-ahusborde2007primal} as in the
previous section, convergence tests using the second, fourth, sixth, and eighth
order operators of \cite{mattsson2004summation} are performed on $N_1 = N_2 = N$
nodes per coordinate direction.

\begin{figure}[tp]
\centering
  \begin{subfigure}{\textwidth}
    \centering
    \includegraphics[width=\textwidth]{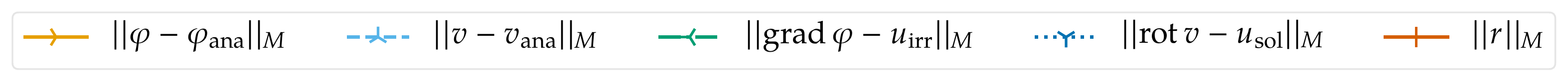}
  \end{subfigure}%
  \\
  \begin{subfigure}{0.49\textwidth}
    \centering
    \includegraphics[width=\textwidth]{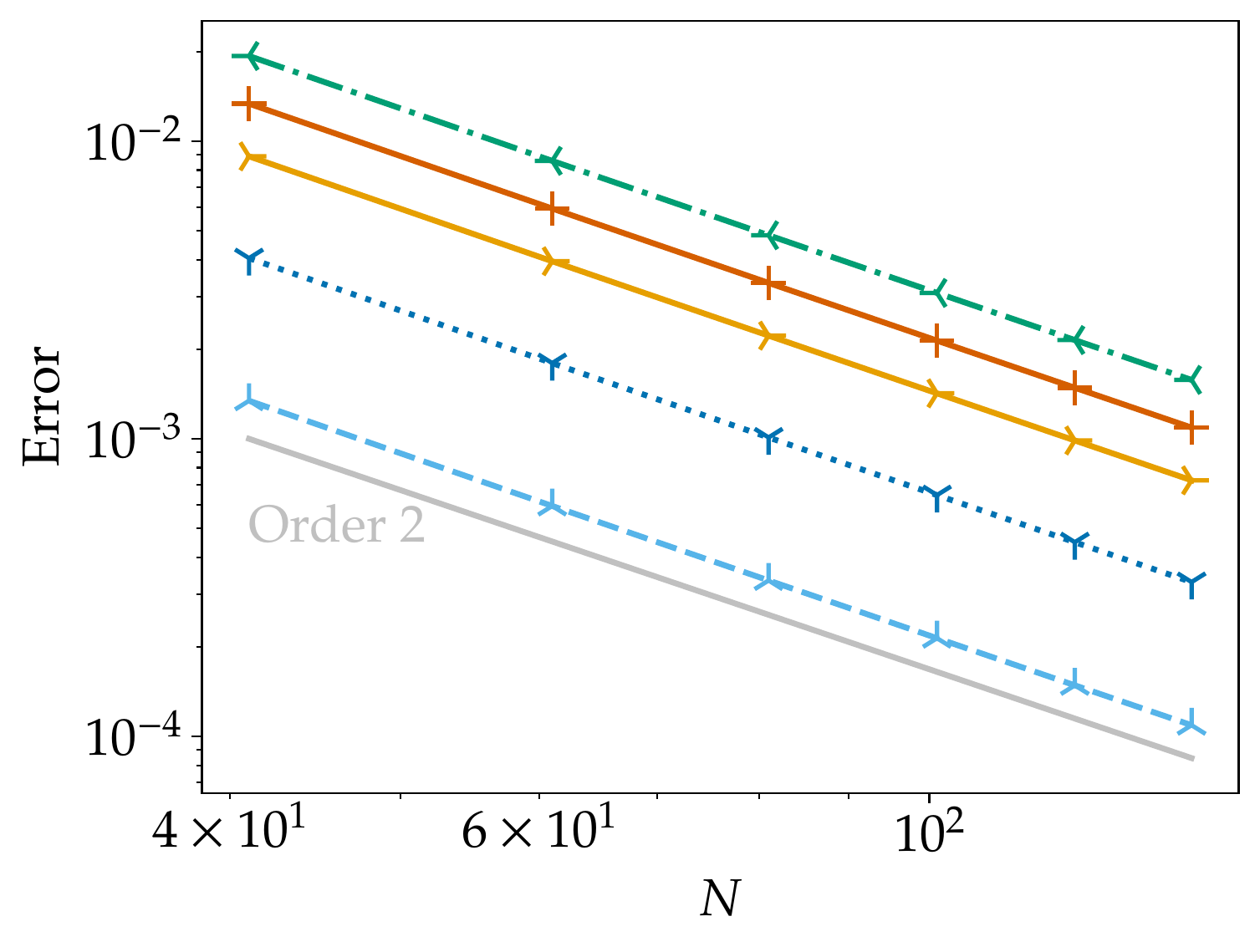}
    \caption{Interior order $2p = 2$.}
  \end{subfigure}%
  \begin{subfigure}{0.49\textwidth}
    \centering
    \includegraphics[width=\textwidth]{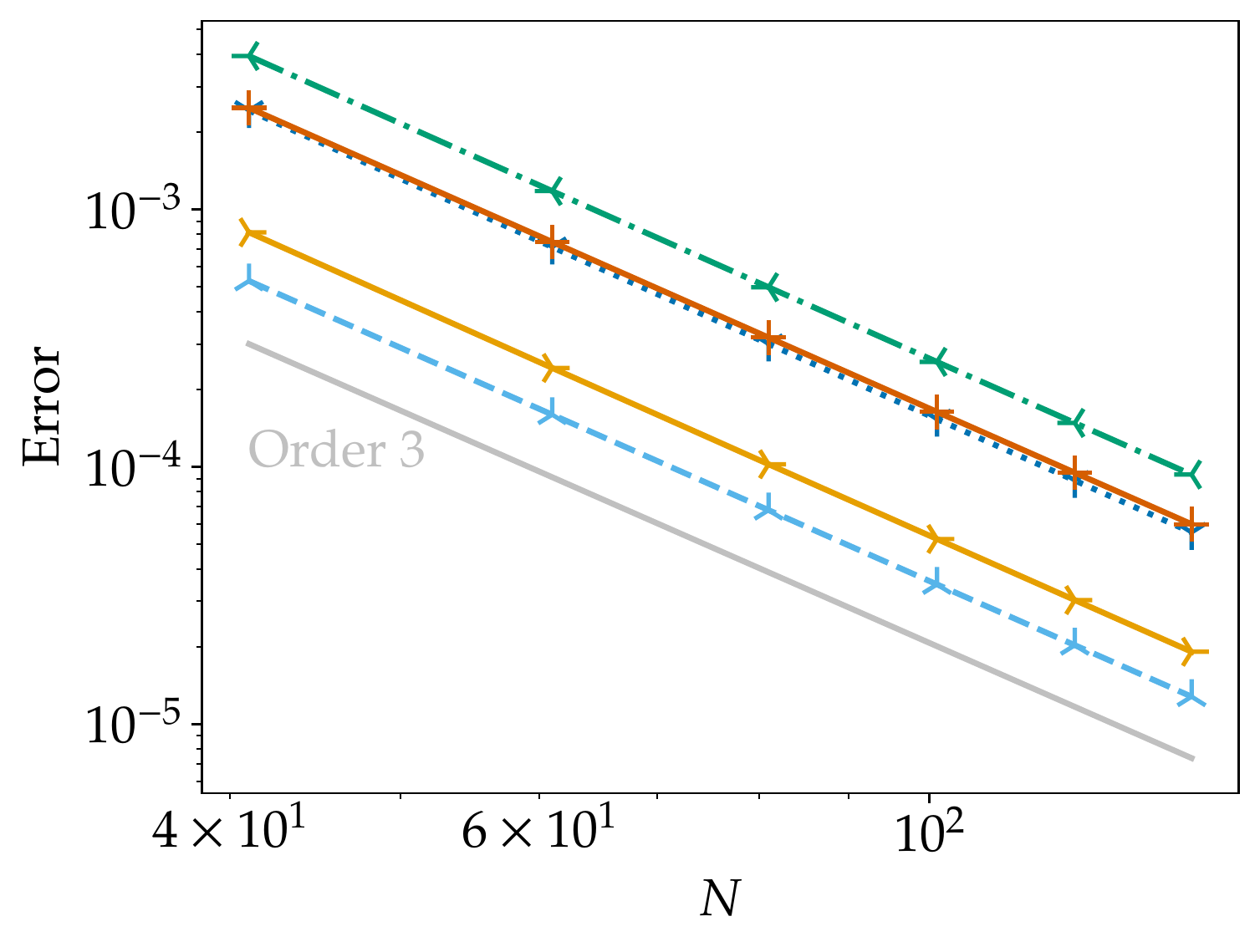}
    \caption{Interior order $2p = 4$.}
  \end{subfigure}
  \\
  \begin{subfigure}{0.49\textwidth}
    \centering
    \includegraphics[width=\textwidth]{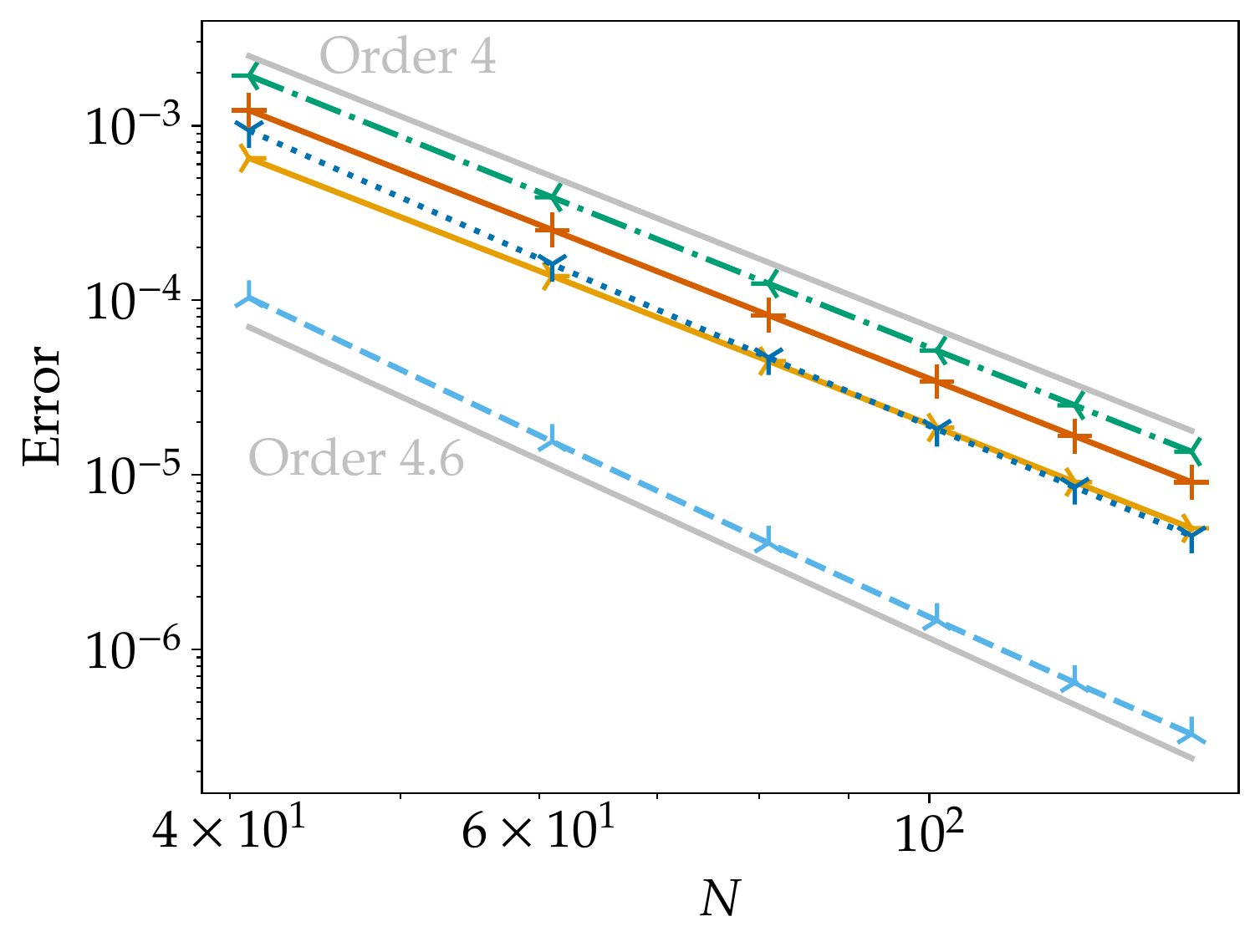}
    \caption{Interior order $2p = 6$.}
  \end{subfigure}%
  \begin{subfigure}{0.49\textwidth}
    \centering
    \includegraphics[width=\textwidth]{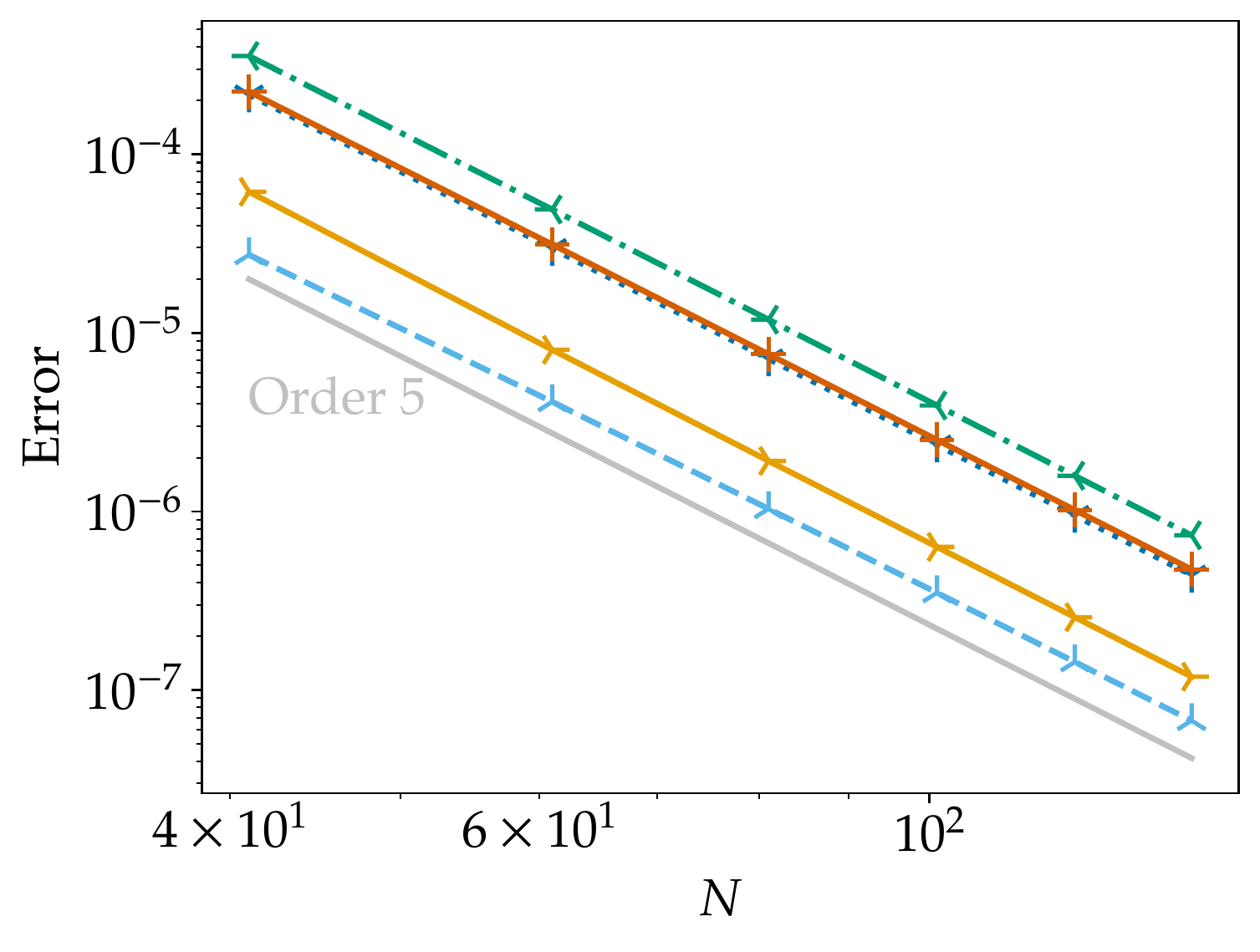}
    \caption{Interior order $2p = 8$.}
  \end{subfigure}
  \caption{Convergence diagrams of the discrete Helmholtz Hodge decomposition
           in two space dimensions
           using the SBP operators of \cite{mattsson2004summation} and $N_1 = N_2 = N$
           grid points per coordinate direction for the problem given by
           \eqref{eq:test-problem-ahusborde2007primal}.}
  \label{fig:convergence_2D}
\end{figure}

The results are visualised in Figure~\ref{fig:convergence_2D}. Both the potentials
$\vec{\phi}, \vec{v}$ and the irrotational/solenoidal parts $\grad \vec{\phi} = \vec{u}_\mathrm{irr}$,
$\rot \vec{v} = \vec{u}_\mathrm{sol}$ converge with an experimental order of accuracy of
$p+1$, as for suitable discretisations of some first order PDEs. The only exception
is given by the vector potential $\vec{v}$ for the
operator with interior order of accuracy $2p = 6$, which show an experimental
order of convergence of $4.6$ instead of $p+1 = 4$.

\subsection{Convergence Tests in Three Space Dimensions}

Here, another convergence test in three space dimensions is conducted. The problem
is given by
\begin{equation}
\label{eq:test-problem-3D}
\begin{gathered}
  u(x_1, x_2, x_3) = \grad \phi + \curl v,
  \\
  \phi(x_1, x_2, x_3) = \frac{1}{\pi} \sin(\pi x_1) \sin(\pi x_2) \sin(\pi x_3),
  \\
  v(x_1, x_2, x_3) = \frac{1}{\pi} \begin{pmatrix}
                                     \sin(\pi x_1) \cos(\pi x_2) \cos(\pi x_3) \\
                                     \cos(\pi x_1) \sin(\pi x_2) \cos(\pi x_3) \\
                                     -2 * \cos(\pi x_1) \cos(\pi x_2) \sin(\pi x_3)
                                   \end{pmatrix},
\end{gathered}
\end{equation}
in the domain $[-1,1]^3$. Again, the irrotational part
$u_\mathrm{irr} = \grad \phi$ and the solenoidal part
$u_\mathrm{sol} = \curl v$ can be computed exactly.
For this problem, the projection onto $\image \curl$ is performed at
first, in accordance with the boundary conditions $\phi|_{\partial \Omega} = 0$
and $v \cdot \nu |_{\partial \Omega} = 0$ satisfied by the potentials
$\phi, \psi$, cf. Proposition~\ref{pro:continuous-projections-2-components}.
As before, the second, fourth, sixth, and eighth order operators of
\cite{mattsson2004summation} are applied and $N_1 = N_2 = N_3 = N$
nodes per coordinate direction are used.

\begin{figure}[tp]
\centering
  \begin{subfigure}{\textwidth}
    \centering
    \includegraphics[width=\textwidth]{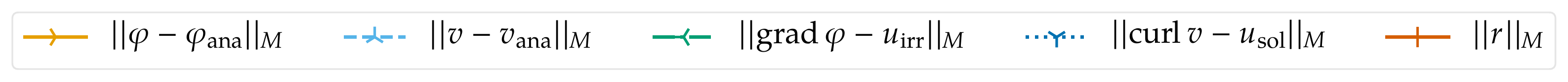}
  \end{subfigure}%
  \\
  \begin{subfigure}{0.49\textwidth}
    \centering
    \includegraphics[width=\textwidth]{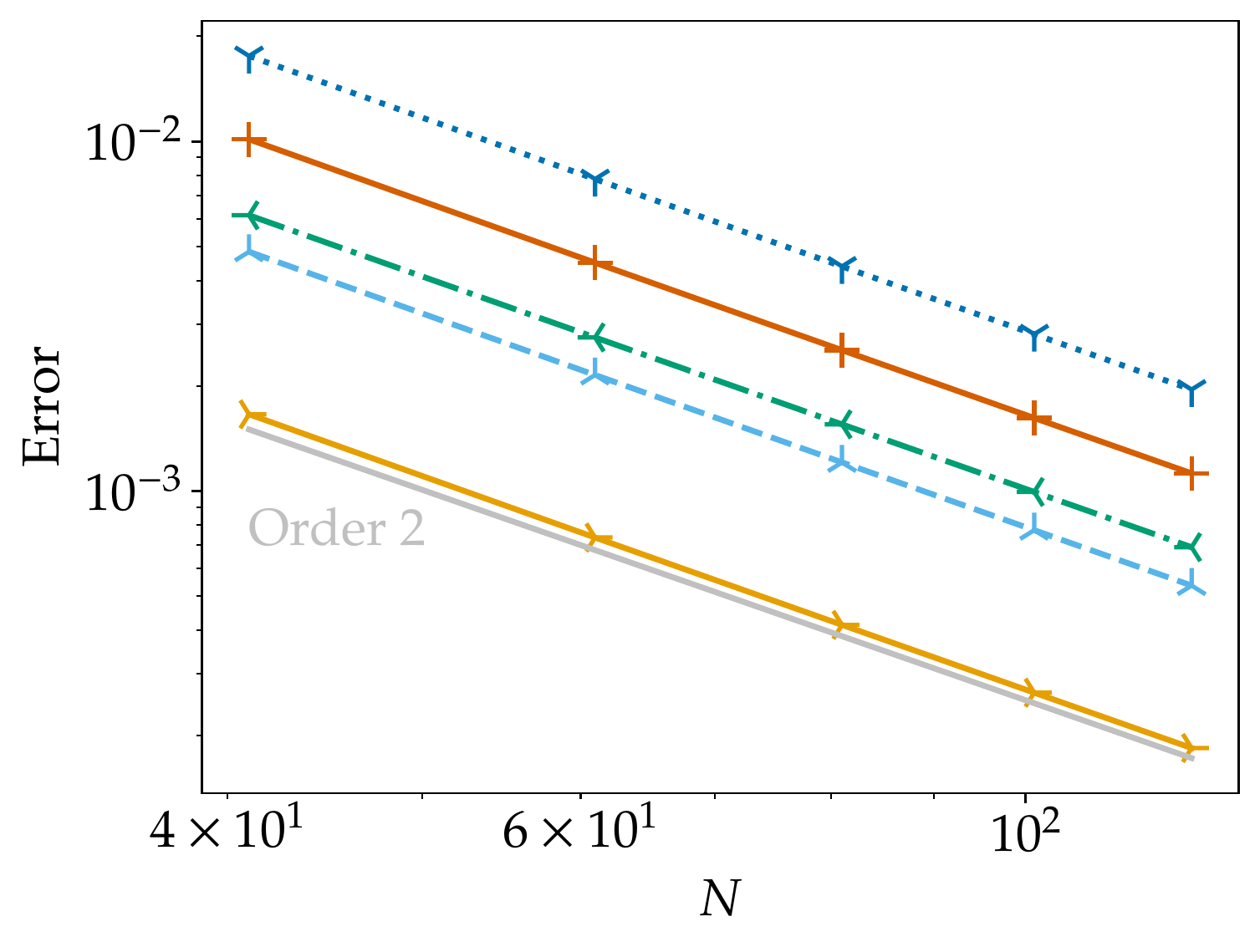}
    \caption{Interior order $2p = 2$.}
  \end{subfigure}%
  \begin{subfigure}{0.49\textwidth}
    \centering
    \includegraphics[width=\textwidth]{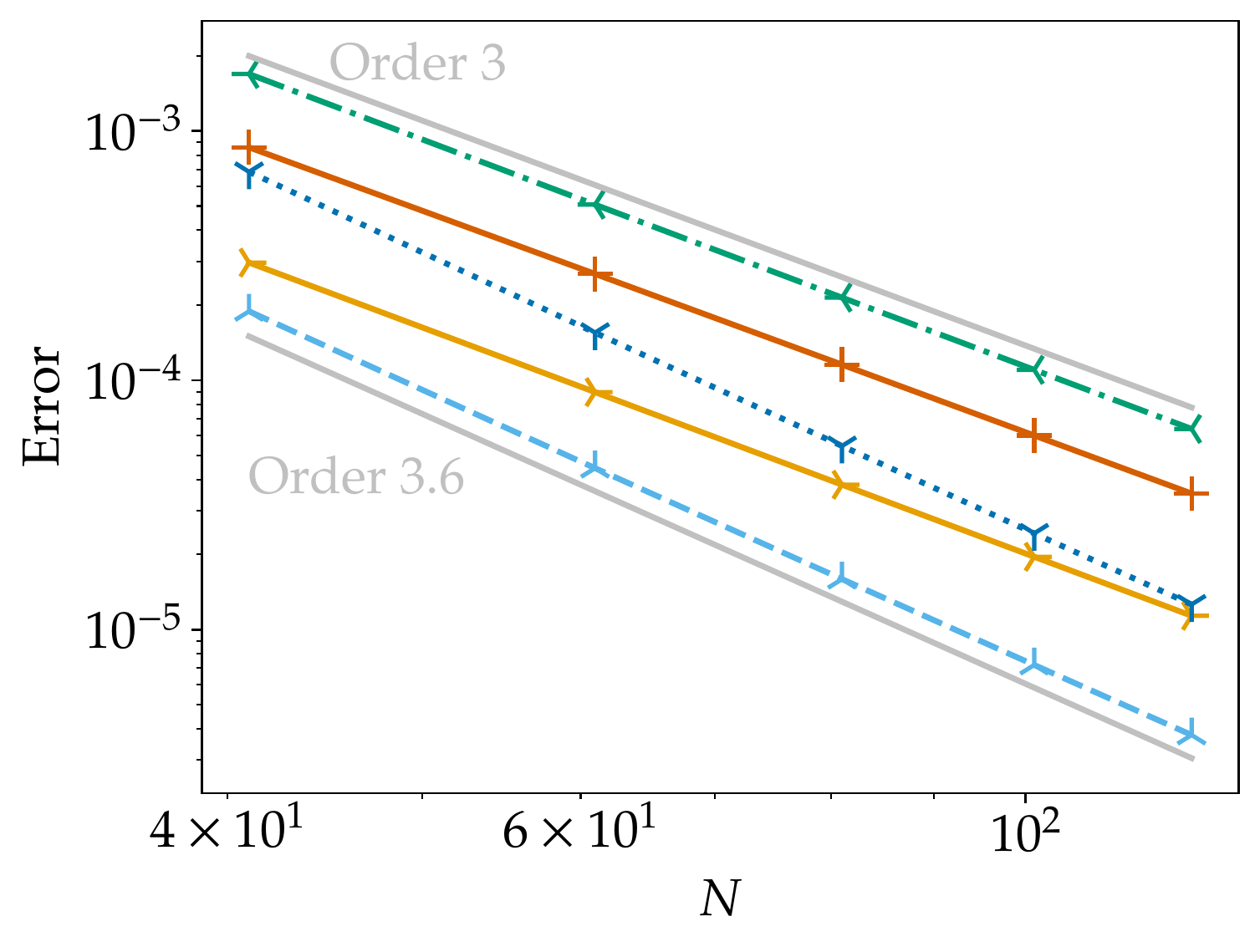}
    \caption{Interior order $2p = 4$.}
  \end{subfigure}
  \\
  \begin{subfigure}{0.49\textwidth}
    \centering
    \includegraphics[width=\textwidth]{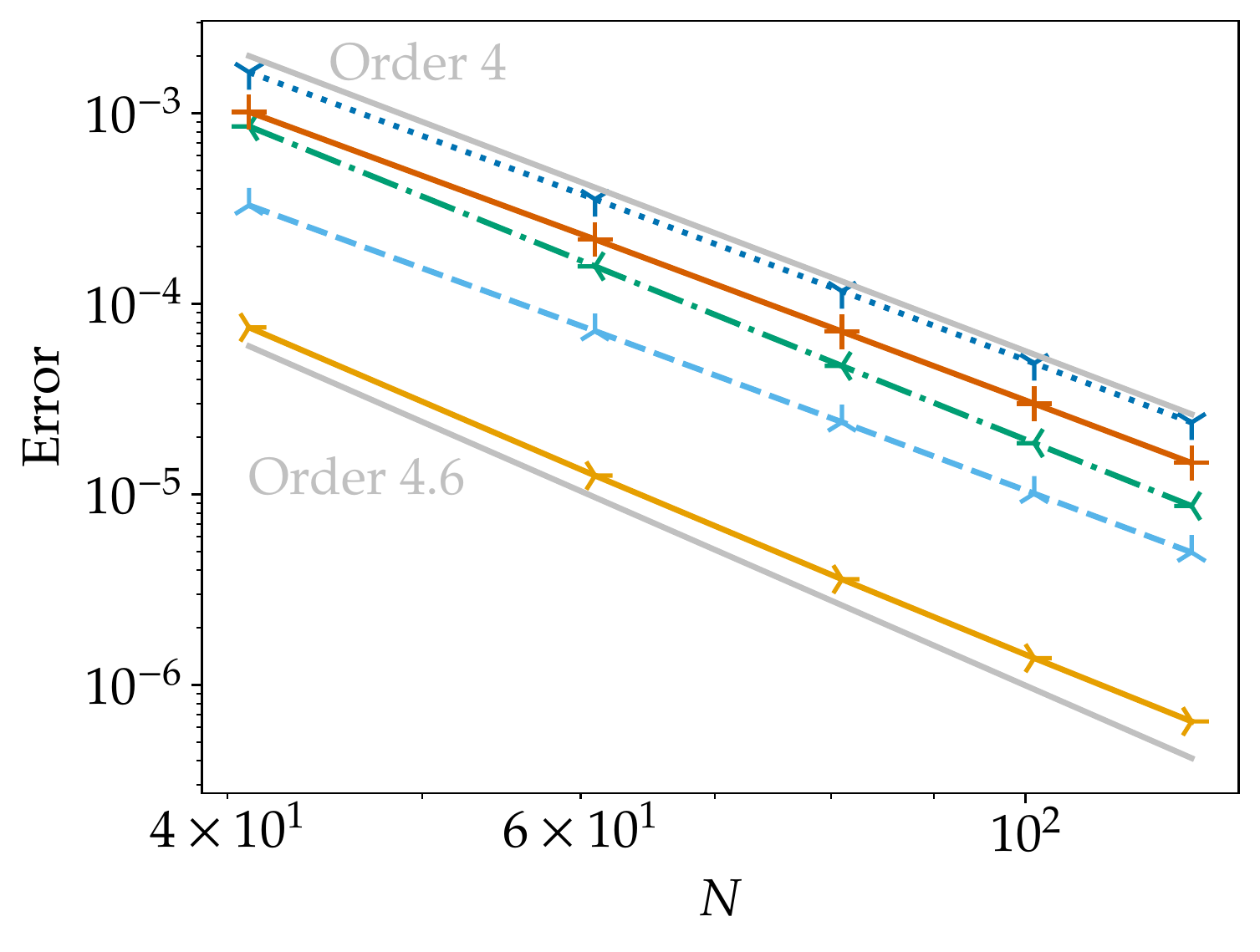}
    \caption{Interior order $2p = 6$.}
  \end{subfigure}%
  \begin{subfigure}{0.49\textwidth}
    \centering
    \includegraphics[width=\textwidth]{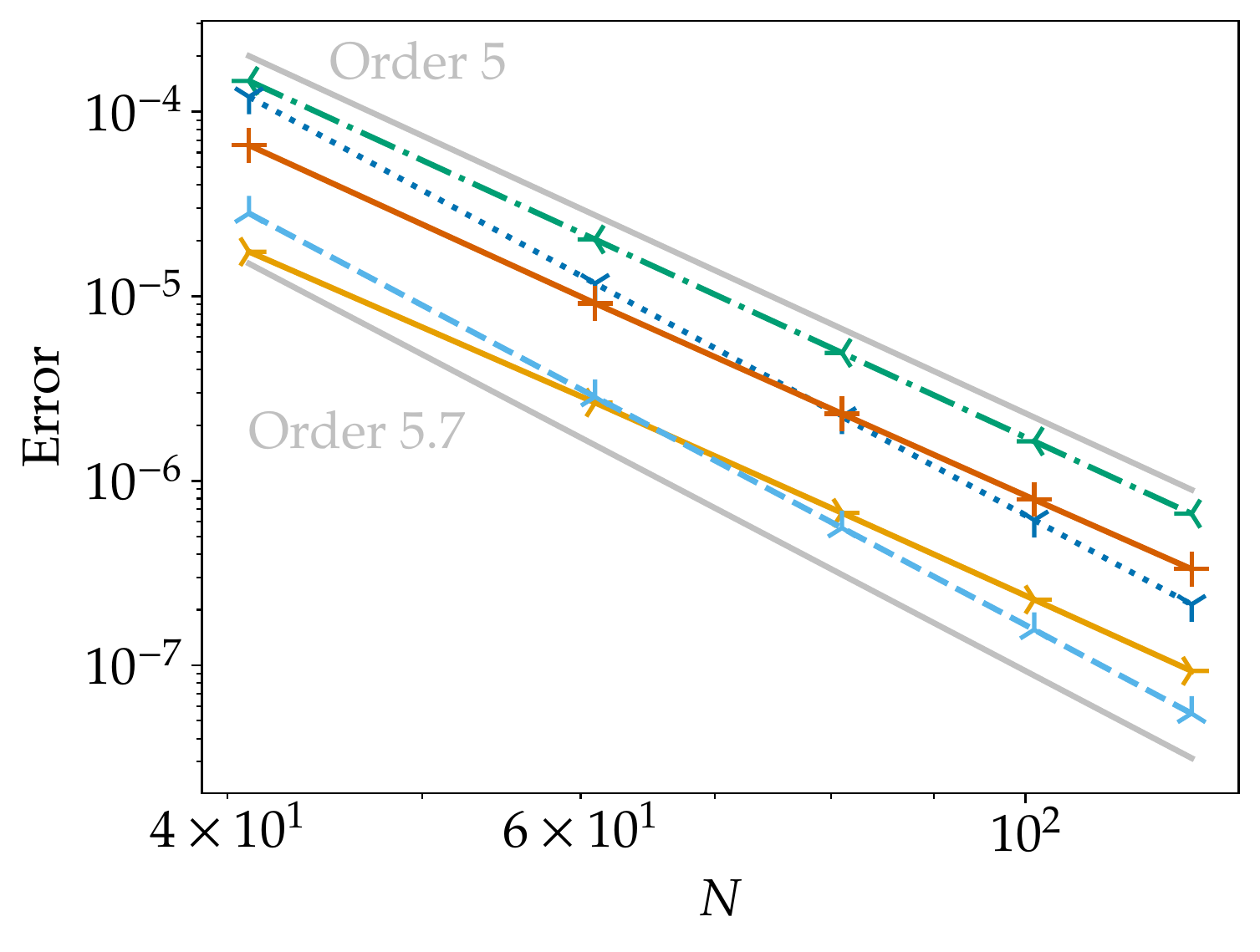}
    \caption{Interior order $2p = 8$.}
  \end{subfigure}
  \caption{Convergence diagrams of the discrete Helmholtz Hodge decomposition
           in three space dimensions using the SBP operators of
           \cite{mattsson2004summation} and $N_1 = N_2 = N_3 = N$
           grid points per coordinate direction for the problem given by
           \eqref{eq:test-problem-3D}.}
  \label{fig:convergence_3D}
\end{figure}

The results visualised in Figure~\ref{fig:convergence_3D} are similar to the
two-dimensional case considered before: The potentials and irrotational/solenoidal
components converge at least with an experimental order of accuracy $p+1$ for an
SBP operator with interior accuracy $2p$. Some potentials or parts converge with
an even higher order $\approx p + 1.5$ for the operators with $2p \in \set{4,6,8}$
in this test case.

\subsection{Analysis of MHD Wave Modes}

Here, the discrete Helmholtz Hodge decomposition will be applied to analyse
linear wave modes in ideal MHD. While the envisioned application in the future
concerns the analysis of numerical results obtained using SBP methods, analytical
fields will be used here to study the applicability of the methods developed in
this article.

Consider a magnetic field
\begin{equation}
  B(x_1,x_2,x_3)
  =
  \underbrace{\begin{pmatrix}
    0 \\
    0 \\
    1
  \end{pmatrix}}_{\mathllap{\text{background}}}
  +
  \underbrace{\begin{pmatrix}
    0 \\
    \epsilon_A \sin(k_1 x_1 + k_3 x_3) \\
    0
  \end{pmatrix}}_{\mathclap{\text{Alfvén}}}
  +
  \underbrace{\begin{pmatrix}
    0 \\
    0 \\
    - \epsilon_m \sin(k_1 x_1 + k_3 x_3)
  \end{pmatrix}}_{\mathclap{\text{magnetosonic}}},
\end{equation}
given as the sum of a background field, a transversal Alfvén mode, and a
longitudinal (fast) magnetosonic mode \cite[Chapter~23]{schnack2009lectures}.
Here, $\epsilon_A, \epsilon_m$ are the amplitudes of the linear waves and
$k = (k_1, 0, k_3)$ is the wave vector.

This magnetic field is discretised on a grid using $N_1 = N_2 = N_3 = N$ nodes
per coordinate direction in the box $\Omega = [-1,1]^3$. The current density
$\vec{j} = \curl \vec{B}$ is computed discretely and evaluated at the plane given by $x_3 = 0$.
There, the first and second component of $\vec{j}$ form the perpendicular current $\vec{j}^\perp$
in the $x_1$-$x_2$ plane. In the setup described above, the Alfvén mode is linked
to $\vec{j}^\perp_1$ and the magnetosonic mode yields $\vec{j}^\perp_2$.

Since the magnetosonic current is closed in the plane, the corresponding part of
$\vec{j}^\perp$ is divergence free. Since the Alfvén mode yields a current parallel
to the background field, the corresponding part of $\vec{j}^\perp$ is not solenoidal
but can be obtained via the Helmholtz Hodge decomposition
$\vec{j}^\perp = \grad \vec{\phi} + \rot \vec{v} + \vec{r}$,
where $\vec{r} \neq \vec{0}$ discretely in general.

While the Helmholtz Hodge decomposition is defined uniquely if $\Omega = \R^n$
is considered and can be used in plasma theory, there are some problems in bounded
domains because of the boundary effects/conditions. Numerically, discretisation
errors will also play a role.

\begin{figure}[tp]
\centering
  \includegraphics[width=\textwidth]{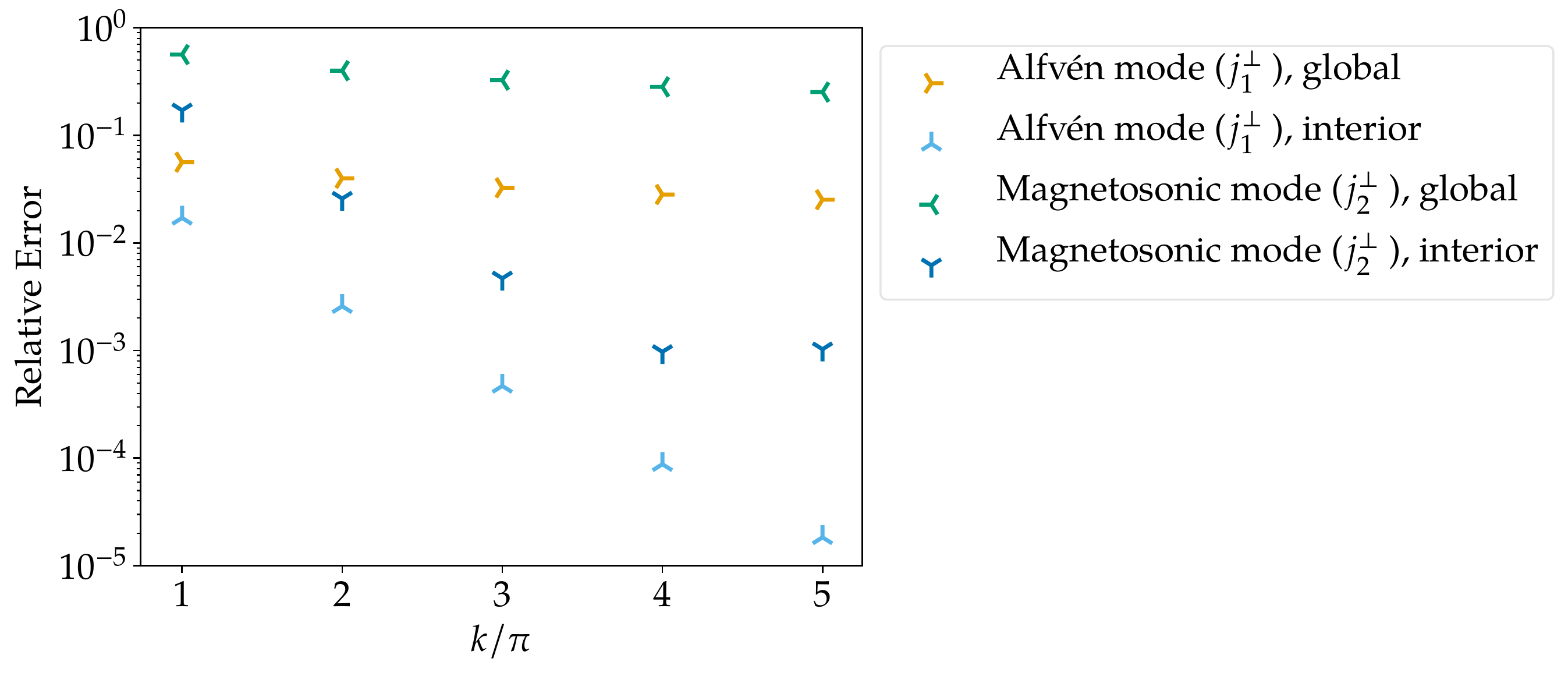}
  \caption{Errors of the wave mode components obtained via the discrete
           Helmholtz Hodge decomposition (projecting at first onto $\image \rot$)
           using the sixth order operator of \cite{mattsson2004summation} and
           $N = 101$ grid points per coordinate direction with parameters
           $k_1 = k_3 = k$, $\epsilon_A = 10^{-3}$, $\epsilon_m = 10^{-2}$.
           The global error is significantly bigger than the one in the interior
           (measured in the central quarter of the domain) because of disturbances
           at the boundaries.}
  \label{fig:MHD_modes__error_vs_k}
\end{figure}

The following observations have been made in this setup.
\begin{itemize}
  \item
  If one of the amplitudes $\epsilon_A, \epsilon_m$ vanishes and the projections
  are chosen in the correct order (projecting at first onto $\image \grad$ if
  $\epsilon_A \neq 0$ and onto $\image \rot$ if $\epsilon_m \neq 0$), $\grad \vec{\phi}$
  reproduces the current density of the Alfvén mode and $\rot \vec{v}$ that of the
  magnetosonic mode with only insignificant numerical artefacts.

  \item
  If the Alfvén and magnetosonic modes have amplitudes of the same order of
  magnitude, the order of the projections matters and disturbances are visible
  at the boundaries. Such disturbances occur even if one of the amplitudes
  vanishes but the projections are done in the wrong order.

  In Figure~\ref{fig:MHD_modes__error_vs_k}, errors of the wave mode components
  obtained via the discrete Helmholtz Hodge decomposition for such a test case
  are presented. Clearly, the global error is significantly bigger than the one
  in the interior (a square, centred in the middle of the domain, with one quarter
  of the total area).

  \item
  The disturbances from the boundaries are reduced if more waves are contained in
  the domain (e.g. if $k_1, k_3$ are increased while keeping the domain $\Omega$
  constant). For example, five waves in $\Omega$ have been sufficient in most
  numerical experiments to yield visually good results in the interior, cf.
  Figure~\ref{fig:MHD_modes__error_vs_k}.

  \item
  If one of the amplitudes is significantly bigger than the other one, e.g.
  because of phase mixing, the order of the projections should be chosen to
  match the order of the amplitudes to get better results. Thus, one should
  project at first onto $\image \grad$ if $\epsilon_A \gg \epsilon_m$ and
  at first onto $\image \rot$ if $\epsilon_m \gg \epsilon_A$.
  Otherwise, the smaller component is dominated by undesired contributions of
  the other one to its potential.

  \item
  If the ratio of the amplitudes is too big, contributions of the dominant
  mode can pollute the potential for the other mode significantly. The size of
  ratios that can be resolved on the grid depends on the number of grid nodes
  (increased resolution increases visible ratios) and the chosen SBP operator.
  For example, $\epsilon_A = 10^{-2}$ and $\epsilon_m = 10^{-4}$ yields acceptable
  results for the sixth order operator using $N = 61$ nodes. Choosing instead
  $\epsilon_m = 10^{-5}$, undesired contributions of the Alfvén mode to $(\rot \vec{v})_2$
  are an order of magnitude bigger than the desired contributions of the
  magnetosonic mode. This mode is visible again if the resolution is increased,
  e.g. to $N = 101$ grid points.
\end{itemize}
To sum up, the order of the projections has to be chosen depending on the given
data and one should experiment with both possibilities if there are no clear
hints concerning an advantageous choice. Additionally, there should be enough
waves in order to yield useful results that are not influenced too much by the
boundaries. Finally, the resolution should be high enough if big ratios of the
amplitudes are present.

If these conditions are satisfied, the discrete Helmholtz Hodge decomposition
can be applied successfully to analyse linear MHD wave modes. A typical plot
of the results for a ratio of wave amplitudes of $10^3$ is shown in
Figure~\ref{fig:MHD_modes__k1_pi__k3_pi__ea_1em2__ef_1em5}.

\begin{figure}[tp]
\centering
  \begin{subfigure}{\textwidth}
    \centering
    \includegraphics[width=\textwidth]{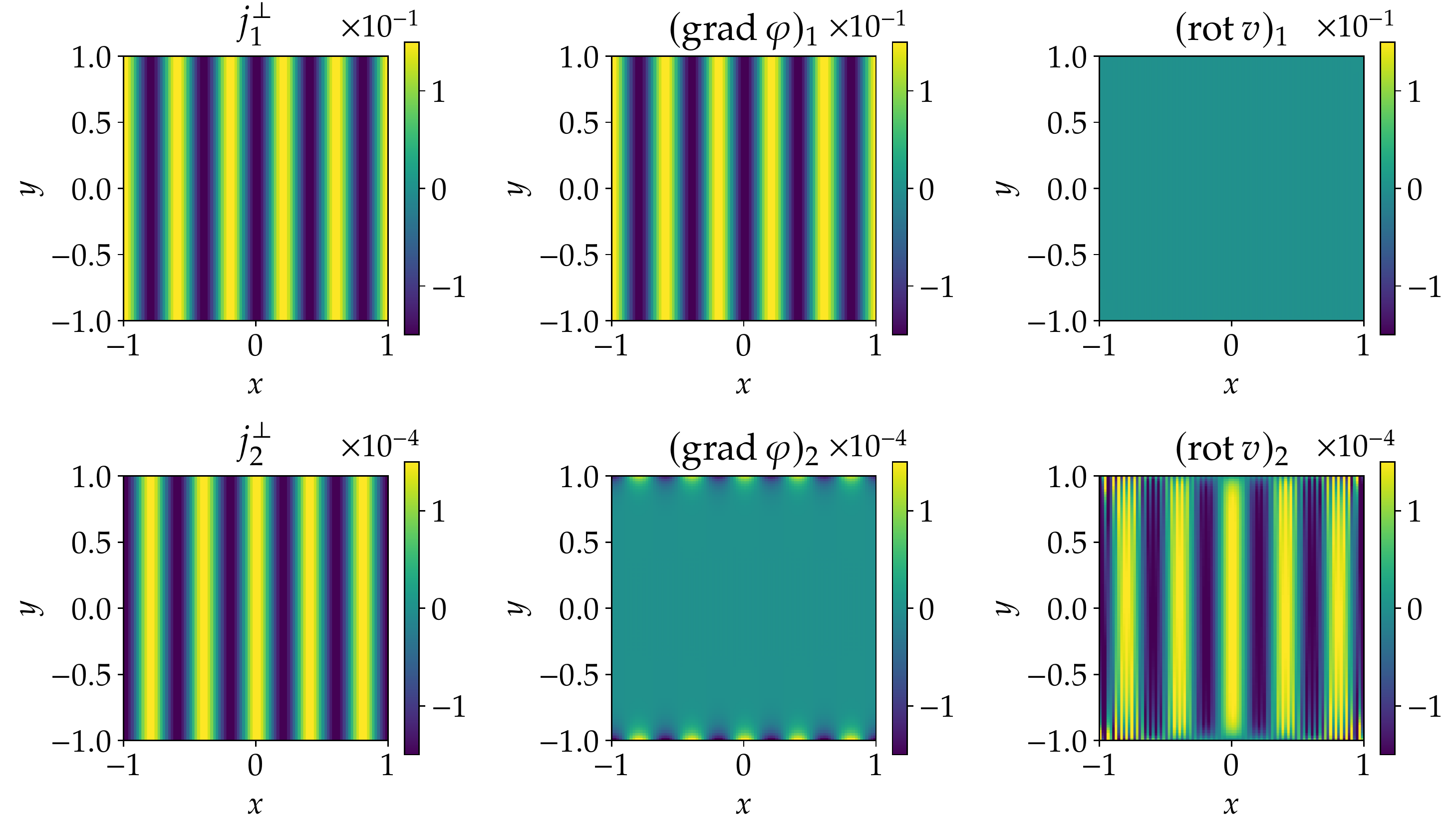}
    \caption{Projecting at first onto $\image \grad$.}
  \end{subfigure}%
  \\
  \begin{subfigure}{\textwidth}
    \centering
    \includegraphics[width=\textwidth]{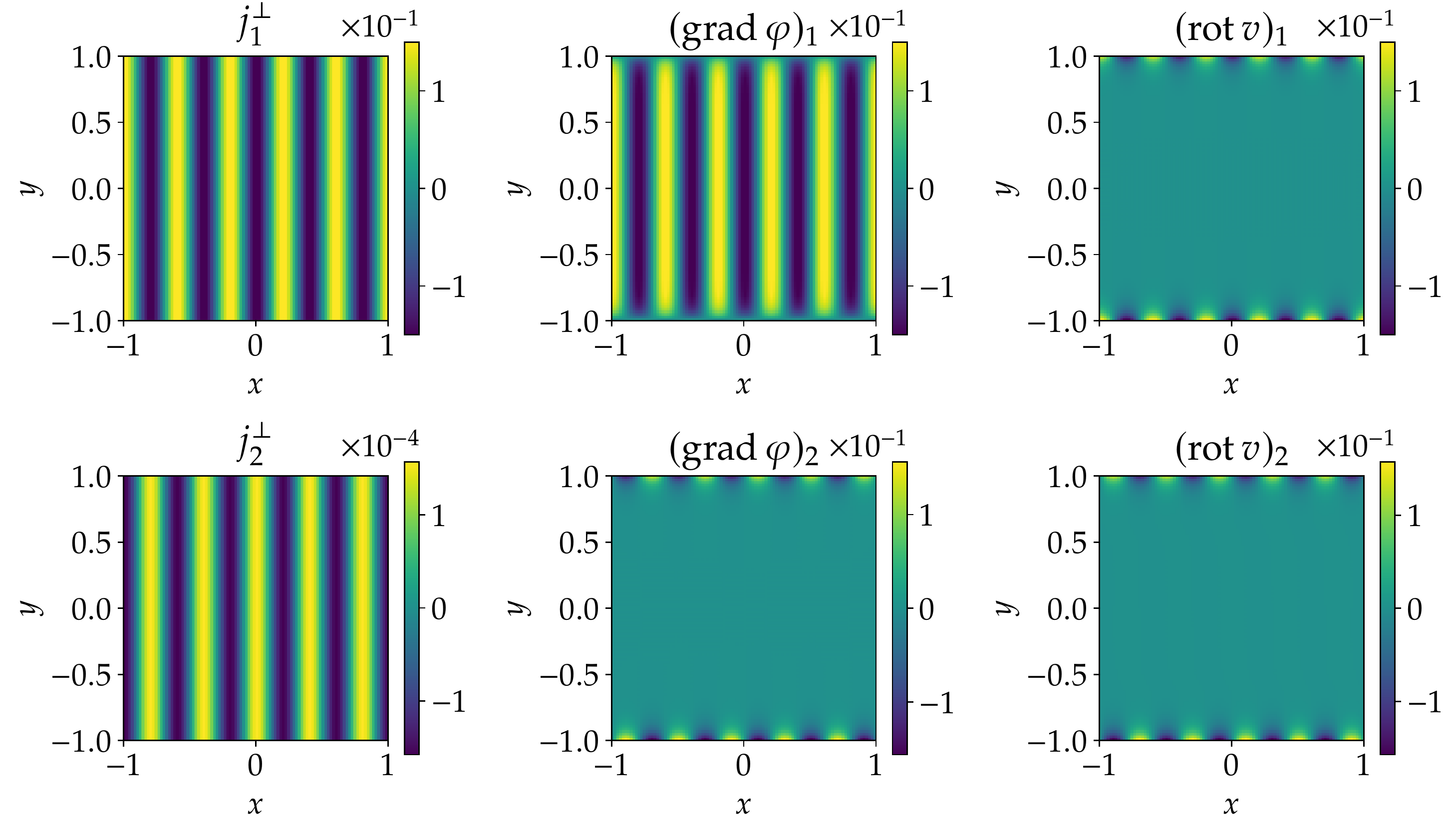}
    \caption{Projecting at first onto $\image \rot$.}
  \end{subfigure}
  \caption{Discrete current density $\vec{j}^\perp$ and its Helmholtz Hodge decomposition
           using the sixth order operator of \cite{mattsson2004summation} and
           $N = 101$ grid points per coordinate direction with parameters
           $k_1 = k_3 = 5\pi$, $\epsilon_A = 10^{-2}$, $\epsilon_m = 10^{-5}$.}
  \label{fig:MHD_modes__k1_pi__k3_pi__ea_1em2__ef_1em5}
\end{figure}

Characterising the chosen variant of the Helmholtz Hodge
decomposition using boundary conditions is not necessarily advantageous
in this example. Indeed, the ratio of the amplitudes of the Alfvén
and magnetosonic waves does not influence the different types of
homogeneous boundary conditions
given in Propositions~\ref{pro:continuous-HHD-BCs} and
\ref{pro:continuous-projections-2-components} (as long as both
amplitudes $\epsilon_A$ and $\epsilon_m$ do not vanish). Hence, basing
a choice of a variant of the Helmholtz Hodge decomposition solely on
boundary conditions, one would not expect to see a qualitative difference
between the behaviour of the variants for $\epsilon_A \gg \epsilon_m$ vs.
$\epsilon_A \ll \epsilon_m$. However, such a qualitative difference can
be clearly observed in practice, as can be seen by comparing
Figure~\ref{fig:MHD_modes__k1_pi__k3_pi__ea_1em2__ef_1em5}
($\epsilon_A \gg \epsilon_m$) to
Figure~\ref{fig:MHD_modes__k1_pi__k3_pi__ea_1em5__ef_1em2}
($\epsilon_A \ll \epsilon_m$).
In the former case, projecting at first onto $\image \grad$ is
advantageous while projecting at first onto $\image \rot$ is better
in the latter case.
This behaviour is in accordance with the discussion above based on the
interpretation of the variants of the Helmholtz Hodge decomposition
as projections using different orders.
\begin{figure}[tp]
\centering
  \begin{subfigure}{\textwidth}
    \centering
    \includegraphics[width=\textwidth]{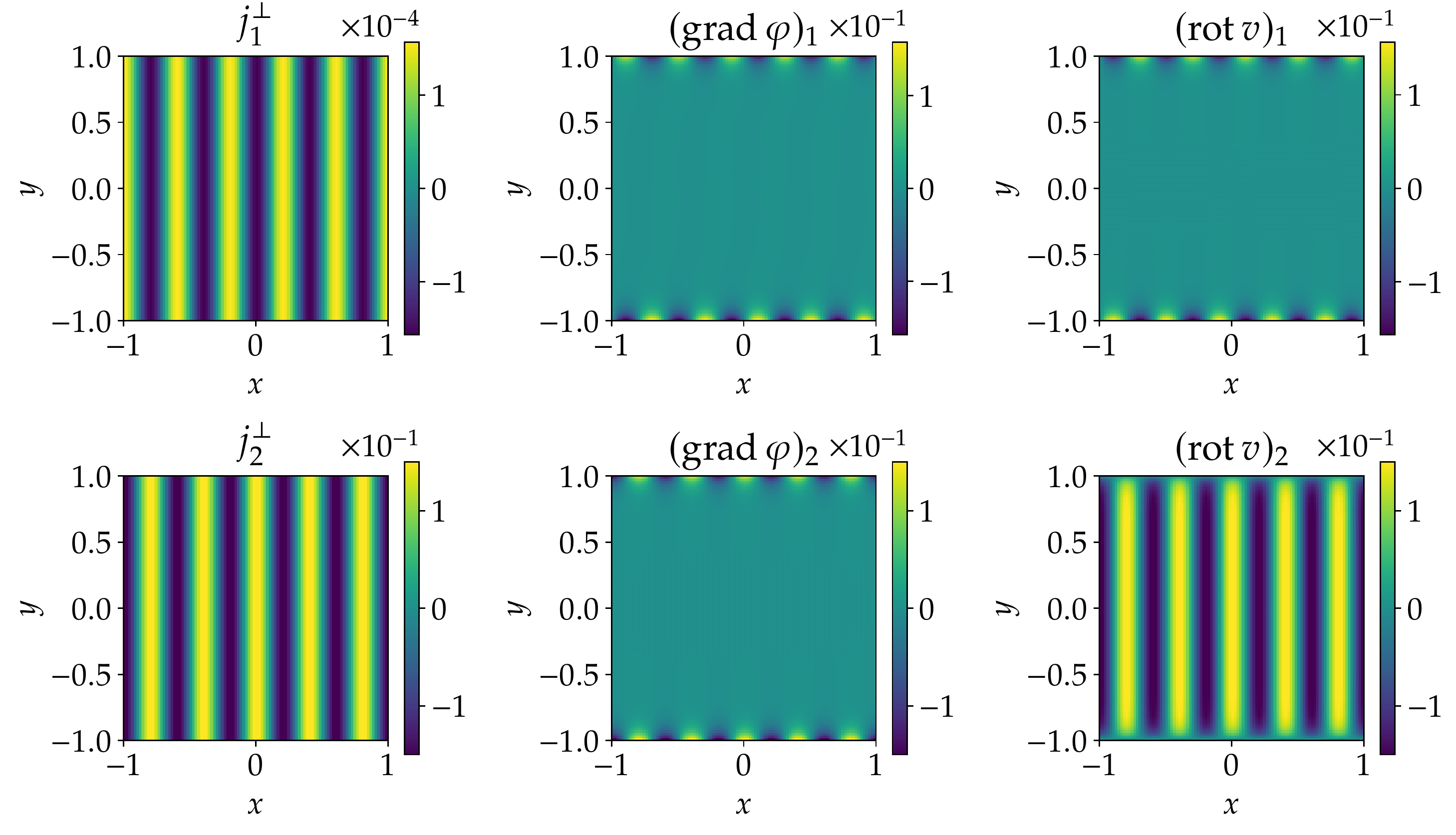}
    \caption{Projecting at first onto $\image \grad$.}
  \end{subfigure}%
  \\
  \begin{subfigure}{\textwidth}
    \centering
    \includegraphics[width=\textwidth]{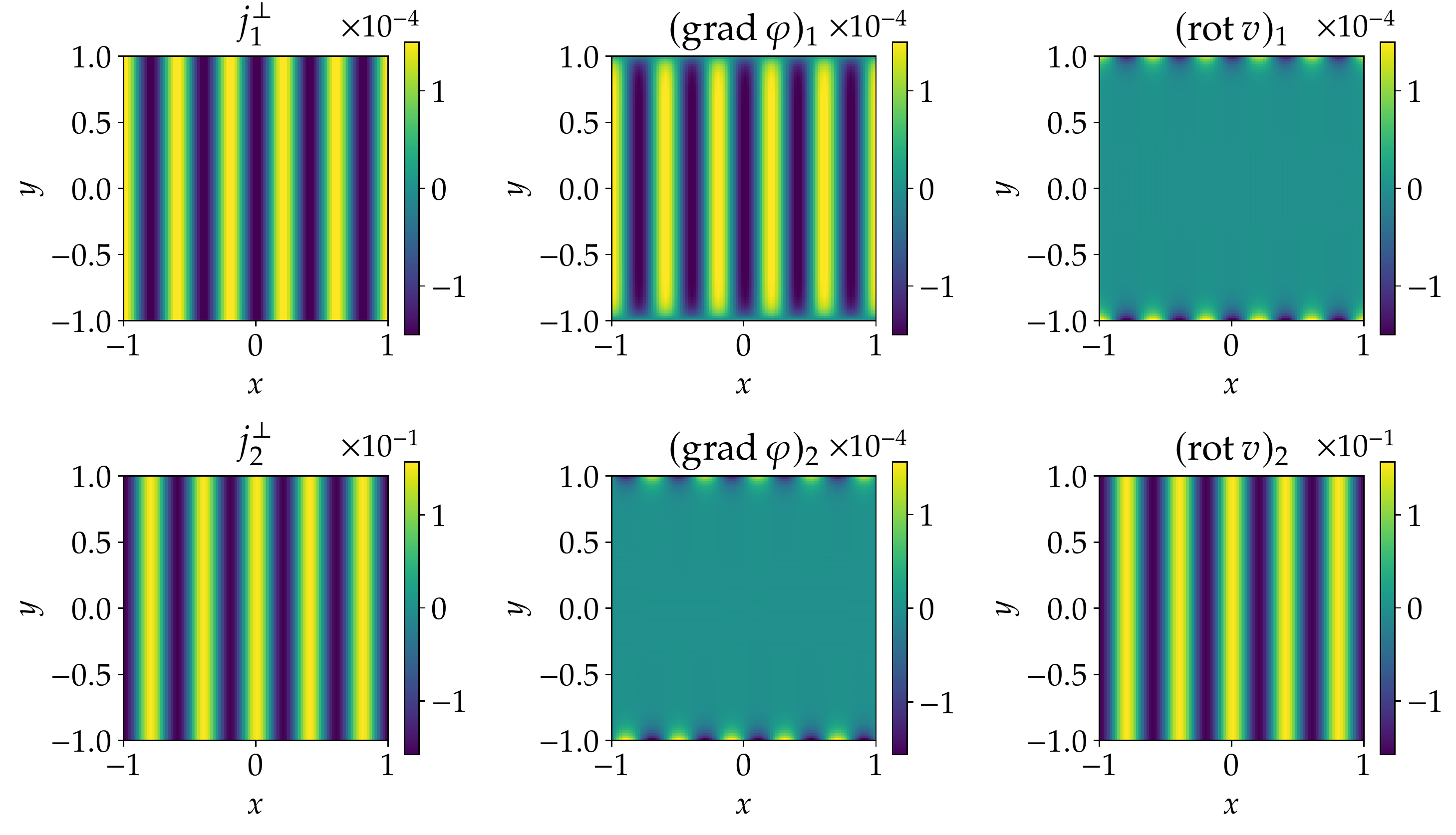}
    \caption{Projecting at first onto $\image \rot$.}
  \end{subfigure}
  \caption{Discrete current density $\vec{j}^\perp$ and its Helmholtz Hodge decomposition
           using the sixth order operator of \cite{mattsson2004summation} and
           $N = 101$ grid points per coordinate direction with parameters
           $k_1 = k_3 = 5\pi$, $\epsilon_A = 10^{-5}$, $\epsilon_m = 10^{-2}$.}
  \label{fig:MHD_modes__k1_pi__k3_pi__ea_1em5__ef_1em2}
\end{figure}

\section{Summary and Discussion}
\label{sec:summary}

In this article, discrete variants of classical results from vector calculus
for finite difference summation by parts operators have been investigated. Firstly,
it has been proven that discrete variants of the classical existence theorems for
scalar/vector potentials of curl/divergence free vector fields cannot hold discretely,
cf. Theorems~\ref{thm:SBP-scalar-potential-2D}, \ref{thm:SBP-vector-potential-2D},
\ref{thm:SBP-scalar-potential-3D}, and \ref{thm:SBP-vector-potential-3D}, basically
because of the finite dimensionality of the discrete functions spaces and the
presence of certain types of grid oscillations.

Based on these results, it has been shown that discrete Helmholtz Hodge
decompositions $\vec{u} = \grad \vec{\phi} + \curl \vec{v} + \vec{r}$ of a given vector field $\vec{u}$ into
an irrotational component $\vec{u}_\mathrm{irr} = \grad \vec{\phi}$ and a solenoidal part
$\vec{u}_\mathrm{sol} = \curl \vec{v}$ will in general have a non-vanishing remainder $\vec{r}$,
contrary to the continuous case, cf. Section~\ref{sec:helmholtz}. This remainder
$\vec{r} \neq \vec{0}$ is associated to certain types of grid oscillations, as supported by
theoretical insights and numerical experiments in Section~\ref{sec:numerical-examples}.
There, applications to the analysis of MHD wave modes are presented and discussed
additionally.

Classically, different variants of the Helmholtz Hodge decomposition
in bounded domains are most often presented with an emphasis on boundary
conditions. Taking another point of view, these variants can also be
interpreted as results of two orthogonal projections in a Hilbert space,
i.e. as least squares problems. Since the images/ranges of these projections
are not orthogonal, the projections do not commute and their order
matters, resulting in different variants of the decomposition.
At the continuous level, these correspond to the different types of
boundary/secondary conditions for the potentials in the associated
normal equations of the least squares problems, which are elliptic PDEs.
Here, computing the least norm least squares solution via iterative methods
has been proposed and applied successfully to compute discrete Helmholtz Hodge
decompositions. Using advanced iterative solvers such as LSQR or LSMR, the
solution of the least norm least squares problem for the projections
corresponds to the solution of the underlying elliptic PDEs via the
related iterative methods, e.g. CG for LSQR and MINRES for LSMR. However,
LSQR/LSMR are known to have some advantageous properties compared
to the application of CG/MINRES to the discretised elliptic PDEs.

The basic argument for the impossibility of a discrete Poincaré lemma (existence
of scalar/vector potentials for irrotational/solenoidal vector fields) uses the
finite dimension of the discrete function spaces and the collocation approach.
If staggered grids are used instead, these arguments do not hold in the same form
and potentials exist for some (low order) operators, e.g. in \cite{silberman2019numerical}
or for the mimetic operators of \cite{hyman1999orthogonal}. Hence, it will be
interesting to consider staggered grid SBP operators in this context, cf.
\cite{oreilly2017energy,mattsson2018compatible,gao2019sbp}.

At the continuous level, there are two widespread versions of the Helmholtz
Hodge decomposition, characterised either by the choice of boundary
conditions for the potentials or by the order of projections onto
$\image \grad = \kernel \curl$ and $\image \curl = \kernel \div$.
Since there are different possibilities to impose boundary conditions
and the relations of the images and kernels do not hold discretely, there
are several other discrete variants. In this article, orthogonal projections
onto $\image \grad$ and $\image \curl$ have been considered, corresponding
to a certain weak imposition of boundary conditions for the potentials.
Projecting instead onto $\kernel \curl$ and $\kernel \div$ is another
option that seems to be viable and will be studied in the future.

The iterative methods used for the orthogonal projections in this article are
equivalent to the application of certain methods such as CG or MINRES to the
associated discrete normal equations in exact arithmetic. At the continuous
level, these normal equations are elliptic second order problems. For example,
the scalar potential is associated to a Neumann problem. These elliptic PDEs
could also be solved discretely using (compatible) narrow stencil operators
while the discrete normal problems are associated to wide stencil operators.
There are also other approaches to approximate Helmholtz Hodge decompositions
discretely, e.g. \cite{angot2013fast,ahusborde2014discrete}. While a detailed
comparison of all these approaches is out of the scope of this article, it would
be interesting for the community and physicists interested in the application
of discrete Helmholtz Hodge decompositions. Of course, the advantages and drawbacks
of different iterative solvers and preconditioners should be considered
for such a detailed comparison as well.

\printbibliography

\end{document}